\newtheorem{THEOREM}{Theorem}[section]
\newtheorem{theorem}[THEOREM]{Theorem}
\newtheorem{corollary}[THEOREM]{Corollary}
\newtheorem{lemma}[THEOREM]{Lemma}
\newtheorem{remark}[THEOREM]{Remark}
\theoremstyle{definition}
\newtheorem{definition}[THEOREM]{Definition}
\theoremstyle{remark}
\def \va {\varepsilon}
\newcommand{\R}{\ensuremath{\mathbb{R}}}   
\DeclareMathOperator{\osc}{osc}
\DeclareMathOperator{\diver}{div} %
\DeclareMathOperator{\tr}{Tr} %
\def\eps{\varepsilon}
\begin{document}
\title{H\"older gradient estimates for a class of singular or degenerate parabolic equations}

\author{\medskip Cyril Imbert, ~ ~Tianling Jin\footnote{Support in part by Hong Kong RGC grant ECS 26300716.} ~~ and ~~ Luis Silvestre\footnote{Support in part by NSF grants DMS-1254332 and DMS-1362525.}}
\date{\today}

\maketitle

\begin{abstract}
  We prove interior H\"older estimate for the spatial gradients of the
  viscosity solutions to the singular or degenerate parabolic equation
\[
u_t=|\nabla u|^{\kappa}\diver (|\nabla u|^{p-2}\nabla u),
\]
where $p\in (1,\infty)$ and $\kappa\in (1-p,\infty).$ This includes
the from $L^\infty$ to $C^{1,\alpha}$ regularity for parabolic
$p$-Laplacian equations in both divergence form with $\kappa=0$, and
non-divergence form with $\kappa=2-p$. This work is a continuation of
a paper by the last two authors \cite{JS}.
\end{abstract}

\section{Introduction}

Let $1<p<\infty$ and $\kappa\in (1-p,\infty)$. We are interested in
the regularity of solutions of
\begin{equation}\label{eq:degenerate}
u_t=|\nabla u|^{\kappa}\diver (|\nabla u|^{p-2}\nabla u).
\end{equation}

When $\kappa=0$, this is the classical parabolic $p$-Laplacian
equation in divergence form. This is the natural case in the context of gradient flows of Sobolev norms. H\"older estimates for the spatial
gradient of their weak solutions (in the sense of distribution) were obtained by DiBenedetto and Friedman in \cite{DBF} (see also Wiegner \cite{Wiegner}).

When $\kappa=2-p$, the equation \eqref{eq:degenerate} is a parabolic
homogeneous $p$-Laplacian equations. This is the most relevant case for applications to tug-of-war-like
stochastic games with white noise, see Peres-Sheffield \cite{PS}. This equation has been studied by Garofalo \cite{Garofalo}, Banerjee-Garofalo \cite{BG, BG2,BG3}, Does \cite{KD}, Manfredi-Parviainen-Rossi \cite{MPR, MPR2012}, Rossi \cite{Rossi2011}, Juutinen \cite{Juutinen2014}, Kawohl-Kr\"omer-Kurtz \cite{KKK}, Liu-Schikorra \cite{LS2015}, Rudd \cite{Rudd2015}, as well as the last two authors \cite{JS}. H\"older estimates for the spatial gradient of their solutions was proved in \cite{JS}. The solution of this equation is understood in the viscosity sense. The toolbox of methods that one can apply are completely different to the variational techniques used classically for  $p$-Laplacian problems.

The equation \eqref{eq:degenerate} can be rewritten as
\begin{equation}\label{eq:degenerate1}
u_t=|\nabla u|^{\gamma}\left(\Delta u + (p-2)|\nabla u|^{-2}u_iu_ju_{ij}\right),
\end{equation}
where $\gamma=p+\kappa-2 > -1$.  In this paper, we prove
H\"older estimates for the spatial gradients of viscosity solutions to
\eqref{eq:degenerate1} for $1<p<\infty$ and $\gamma\in
(-1,\infty)$.
Therefore, it provides a unified approach for all those $\gamma$ and $p$, including the two
special cases $\gamma=0$ and $\gamma=p-2$ mentioned above.

The viscosity solutions to \eqref{eq:degenerate1} with $\gamma>-1$ and
$p>1$ falls into the general framework studied by Ohnuma-Sato in
\cite{OS}, which is an extension of the work of Barles-Georgelin \cite{BGe} and Ishii-Souganidis
\cite{IS} on the viscosity solutions of singular/degenerate parabolic
equations. We postpone the definition of viscosity solutions of
\eqref{eq:degenerate1} to Section \ref{sec:approximation}.  For $r>0$,
$Q_r$ denotes $B_r\times(-r^2,0]$, where $B_r\subset\R^n$ is the ball
of radius $r$ centered at the origin.

\begin{theorem}\label{thm:mainholdergradient}
  Let $u$ be a viscosity solution of \eqref{eq:degenerate1} in $Q_1$,
  where $1<p<\infty$ and $\gamma\in (-1,\infty)$. Then there exist two
  constants $\alpha\in (0,1)$ and $C>0$, both of which depends only on
  $n,\gamma, p$ and $\|u\|_{L^\infty(Q_1)}$, such that
\[
 \|\nabla u\|_{C^\alpha(Q_{1/2})}\le C.
\]
Also, the following H\"older regularity in time holds
\[ \sup_{(x,t), (x,s)\in Q_{1/2}}\frac{|u(x,t)-u(x,s)|}{|t-s|^{\frac{1+\alpha}{2-\alpha\gamma}}} \le C. \]
Note that $(1+\alpha)/(2-\alpha \gamma) > 1/2$ for every $\alpha > 0$ and $\gamma > -1$.
\end{theorem}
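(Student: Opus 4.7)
The plan is to follow the viscosity-solution strategy of the companion paper \cite{JS}, which treated the special case $\gamma=p-2$, and to extend it to the full range $\gamma\in(-1,\infty)$. The argument splits into three parts: an approximation reducing the problem to smooth solutions, an iterated oscillation-decay estimate for $\nabla u$, and a time-regularity estimate whose exponent is forced by the scaling of the equation.

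For the approximation step I would replace $|\nabla u|^\gamma$ by the regularized coefficient $(|\nabla u|^2+\eps^2)^{\gamma/2}$ and work with classical solutions $u^\eps$ of the resulting non-degenerate equation. All estimates below are derived for $u^\eps$ with constants independent of $\eps$, and the limit $\eps\to 0$ is handled by the stability theory for the Ohnuma--Sato class \cite{OS}. A standard Bernstein computation on $w=|\nabla u^\eps|^2+\eps^2$, using that $\gamma+1>0$ and $p>1$ guarantee uniform ellipticity of the auxiliary linear operator controlling $w$ at large values, yields the preliminary Lipschitz bound $\|\nabla u^\eps\|_{L^\infty(Q_{3/4})}\le C$ starting from only the $L^\infty$ bound on $u$.

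The heart of the proof is an oscillation-decay iteration performed on the rescalings
\[
\tilde u(x,t):=\frac{u^\eps(rx,\,r^{2-\alpha\gamma}t)-\ell\cdot x}{r^{1+\alpha}},
\]
which solve an equation of the same form; the time scale $r^{2-\alpha\gamma}$ is exactly the one that leaves the factor $|\nabla u|^\gamma$ invariant under the rescaling. The goal is to find $\alpha\in(0,1)$ and $\theta<1$ such that at each dyadic scale one can choose $\ell$ with $\osc_{Q_{1/2}}\nabla\tilde u\le\theta\,\osc_{Q_1}\nabla\tilde u$. This is proved by a dichotomy. In the \emph{non-degenerate alternative}, when $|\nabla\tilde u|\ge\mu>0$ throughout $Q_1$, the factor $|\nabla\tilde u|^\gamma$ is bounded away from $0$ and $\infty$, and each directional derivative $v=\partial_e\tilde u$ solves a linear uniformly parabolic equation in non-divergence form whose principal part is $|\nabla\tilde u|^\gamma\bigl(\delta_{ij}+(p-2)\tilde u_i\tilde u_j/|\nabla\tilde u|^2\bigr)v_{ij}$; the Krylov--Safonov Hölder estimate then yields the required geometric decrease. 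In the \emph{degenerate alternative}, when $|\nabla\tilde u|<\mu$ somewhere in $Q_1$, one argues as in \cite{JS}: either $|\nabla\tilde u|$ is uniformly small on $Q_{1/2}$ and the choice $\ell=0$ already produces small oscillation, or a compactness-plus-linearization argument against affine solutions of a limiting constant-gradient equation upgrades this to a quantitative improvement.

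The main obstacle is exactly this dichotomy: for $\gamma<0$ the coefficient $|\nabla\tilde u|^\gamma$ is singular where the gradient vanishes, while for $\gamma>0$ the equation becomes very slow there, and one has to run the compactness argument with constants that are uniform in $\eps$, in the scale $r$, and in the sign of $\gamma$. Once $\osc_{Q_r}\nabla u^\eps\le Cr^\alpha$ is established, the time estimate follows by balancing $u_t\approx|\nabla u|^\gamma(\Delta u+(p-2)|\nabla u|^{-2}u_iu_ju_{ij})$: on a parabolic cylinder of spatial radius $r$ one has $|\nabla u|\lesssim r^\alpha$ and $|\nabla^2 u|\lesssim r^{\alpha-1}$, giving $|u(x,t)-u(x,s)|\lesssim r^{1+\alpha}$ whenever $|t-s|\lesssim r^{2-\alpha\gamma}$, which is precisely the exponent $(1+\alpha)/(2-\alpha\gamma)$ in the statement. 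Passing to the limit $\eps\to 0$ via \cite{OS} concludes the theorem for the original viscosity solution.
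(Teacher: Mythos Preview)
Your overall architecture (regularize, prove uniform Lipschitz, run an oscillation-decay iteration on the gradient, read off the time exponent from scaling, pass to the limit via Ohnuma--Sato stability) matches the paper. But the Lipschitz step is a genuine gap. You write that ``a standard Bernstein computation on $w=|\nabla u^\eps|^2+\eps^2$'' yields the interior Lipschitz bound. The authors explicitly state that the Bernstein-type argument used in \cite{JS} for $\gamma=0$ (where $|\nabla u|^p$ is a subsolution of a uniformly parabolic equation) does \emph{not} extend to general $\gamma$; they were unable to find any analogous quantity. Instead the paper obtains Lipschitz regularity by the Ishii--Lions doubling-of-variables method, and even that has to be applied twice: first with the penalization $\phi(r)=-r\log r$ to get a log-Lipschitz modulus, and then, feeding that modulus back in, with $\phi(r)=r-\tfrac{1}{2-\gamma_0}r^{2-\gamma_0}$ to upgrade to Lipschitz. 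Without this, the iteration in Section~4 never starts, because one needs $|\nabla u|\le 1$ after rescaling.

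Your dichotomy also differs in an important detail. You split according to whether $|\nabla\tilde u|\ge\mu$ \emph{throughout} $Q_1$, but there is no mechanism forcing one alternative or the other to hold pointwise. The paper's dichotomy is measure-theoretic: either, for every direction $e$, the set $\{\nabla u\cdot e\le\ell\}$ has positive proportion in $Q_1$ (then an auxiliary function $w=(\nabla u\cdot e-\ell+\rho|\nabla u|^2)^+$ is a subsolution of a uniformly parabolic equation \emph{on its own positivity set}, and a weak Harnack inequality gives geometric decay of $|\nabla u|$), or for some $e$ the gradient is close to $e$ in large measure, in which case one shows $u$ is uniformly close to an affine function and invokes Wang's small-perturbation $C^{2,\beta}$ theorem. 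The passage from ``close to affine on each time slice'' to ``close to affine on the full cylinder'' is a separate nontrivial step (Lemmas~\ref{l:timeslices2allcylinder} and~\ref{l:timeslices2allcylinder-aroundplane} in the paper), and the authors flag it as considerably harder than its $\gamma=0$ counterpart; your outline does not account for it.
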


Our proof in this paper follows a similar structure as in \cite{JS}, with some notable differences that we explain below. We use non-divergence techniques in the context of viscosity solutions. Theorem \ref{thm:mainholdergradient} tells us that these techniques are in some sense stronger than variational methods when dealing with the regularity of scalar $p$-Laplacian type equations. The weakness of these methods (at least as of now) is that they are ineffective for systems.

The greatest difficulty extending the result in \cite{JS} to Theorem \ref{thm:mainholdergradient} comes from the lack of uniform ellipticity. When $\gamma = 0$, the equation \eqref{eq:degenerate1} is a parabolic equation in non-divergence form with uniformly elliptic coefficients (depending on the solution $u$). Because of this, in \cite{JS}, we use the theory developed by Krylov and Safonov, and other classical results, to get some basic uniform a priori estimates. This fact is no longer true for other values of $\gamma$. The first step in our proof is to obtain a Lipschitz modulus of continuity. That step uses the uniform ellipticity very strongly in \cite{JS}. In this paper we take a different approach using the method of Ishii and Lions \cite{IL}. Another step where the uniform ellipticity plays a strong role is in a lemma which transfers an oscillation bound in space, for every fixed time, to a space-time oscillation. In this paper that is achieved through Lemmas \ref{l:timeslices2allcylinder} and \ref{l:timeslices2allcylinder-aroundplane}, which are considerably more difficult than their counterpart in \cite{JS}. Other, more minor, difficulties include the fact that the non-homogeneous right hand side forces us to work with a different scaling (See the definition of $Q_r^\rho$ by the beginning of Section \ref{sec:holder gradient}).

In order to avoid some of the technical difficulties caused by the non-differentiability of viscosity solutions,  we first consider the regularized problem \eqref{eq:main va} in the below, and then obtain uniform estimates so that we can pass to the limit in the end. For $\va\in(0,1)$, let $u$ be smooth and satisfy that
\begin{equation}\label{eq:main va} 
\partial_t u = (|\nabla u |^2+\va^2)^{\gamma/2} \left(\delta_{ij}+(p-2)\frac{u_iu_j}{|\nabla u|^2+\va^2}\right)u_{ij}.
\end{equation}
We are going to establish Lipschitz estimate and H\"older gradient
estimates for $u$, which will be independent of $\va\in (0,1)$, in
Sections \ref{sec:lip}, \ref{sec:holder in t}, \ref{sec:holder
  gradient}. Then in Section \ref{sec:approximation}, we recall the
definition of viscosity solutions to \eqref{eq:degenerate1}, as well
as their several useful properties, and prove Theorem
\ref{thm:mainholdergradient} via approximation arguments. This idea of approximating the problem with a smoother one and proving uniform estimates is very standard.

\bigskip
\noindent\textbf{Acknowledgement:} Part of this work was done when T. Jin was visiting California Institute of Technology as an Orr foundation Caltech-HKUST Visiting Scholar. He would like to thank Professor Thomas Y. Hou for the kind hosting and discussions.

\section{Lipschitz estimates in the spatial variables}\label{sec:lip}

The proof of Lipschitz estimate in \cite{JS} for $\gamma=0$ is based
on a calculation that $|\nabla u|^p$ is a subsolution of a uniformly
parabolic equation. We are not able to find a similar
quantity for other nonzero $\gamma$. The proof we give here is completely different. It makes use of the
Ishii-Lions' method \cite{IL}. However, we need to apply this method
twice: first we obtain log-Lipschitz estimates, and then use this
log-Lipschitz estimate and Ishii-Lions' method again to prove
Lipschitz estimate. Moreover, the Lipschitz estimate holds for
$\gamma>-2$ instead of $\gamma>-1$.

\begin{lemma}[Log-Lipschitz estimate]\label{lem:log-lip}
  Let $u$ be a smooth solution of \eqref{eq:main va} in $Q_4$ with $\gamma>-2$ and $\va\in (0,1)$.
  Then there exist two positive constants $L_1$ and $L_2$ depending
  only on $n, p, \gamma$ and $\|u\|_{L^\infty(Q_4)}$ such that for every
  $(t_0,x_0) \in Q_1$, we have
\[
u(t,x) - u(t,y) \le L_1 |x-y| |\log|x-y||+ \frac{L_2}2 |x-x_0|^2+ \frac{L_2}2 |y-x_0|^2 + \frac{L_2}2
(t-t_0)^2 
\]
for all $ t \in [t_0-1,t_0]$ and $x,y \in B_1 (x_0).$
\end{lemma}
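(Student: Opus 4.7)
The plan is to apply the Ishii--Lions doubling-of-variables method announced by the authors. I first introduce the auxiliary function
\[
\Phi(t,x,y) := u(t,x) - u(t,y) - L_1\,\phi(|x-y|) - \tfrac{L_2}{2}\bigl(|x-x_0|^2 + |y-x_0|^2 + (t-t_0)^2\bigr)
\]
on $[t_0-1,t_0]\times \bar B_1(x_0)\times \bar B_1(x_0)$, where $\phi$ is a smooth concave function equal to $-r\log r$ on some $(0,r_0)$ (so $\phi''(r)=-1/r$ there) with a harmless extension beyond. The desired estimate is equivalent to $\sup \Phi \le 0$ for appropriate $L_1,L_2$. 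I argue by contradiction: if $\sup \Phi > 0$, I first choose $L_2$ large, depending only on $\|u\|_{L^\infty(Q_4)}$, so that the quadratic penalty localizes the maximum at some interior point $(\bar t, \bar x, \bar y)$ with $\bar x \ne \bar y$, and moreover $|z|:=|\bar x - \bar y|$ is small, of order $1/(L_1\log L_1)$ (from $L_1\phi(|z|)\le 2\|u\|_{L^\infty}$).

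At $(\bar t,\bar x,\bar y)$, set $e = z/|z|$, $p=\nabla u(\bar t,\bar x)$, $q=\nabla u(\bar t,\bar y)$, $X = D^2 u(\bar t,\bar x)$, $Y = D^2 u(\bar t,\bar y)$. The first-order conditions give $p = L_1\phi'(|z|)e + L_2(\bar x - x_0)$ and similarly for $q$, so both gradients have magnitude $\sim L_1\log(1/|z|)$, are nearly parallel to $e$, and $|p-q|\lesssim L_2$. From $\partial_t \Phi = 0$ one obtains $\partial_t u(\bar t,\bar x) - \partial_t u(\bar t,\bar y) = L_2(\bar t-t_0) = O(L_2)$. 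The second-order maximum condition (Crandall--Ishii type, here for smooth $u$), tested against $(\xi,\eta) = (e,-e)$ and against $(v,v)$ for unit $v\perp e$, produces
\[
\langle (X-Y)e,e\rangle \le 4L_1\phi''(|z|) + 2L_2 = -\tfrac{4L_1}{|z|} + 2L_2, \qquad \langle (X-Y)v,v\rangle \le 2L_2,
\]
so $X-Y$ has a very negative eigenvalue in direction $e$ while being bounded above on its orthogonal complement.

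I then rewrite the PDE as $\partial_t u = a(\nabla u)\,\tr\bigl(B(\nabla u)\,D^2 u\bigr)$ with $a(p) = (|p|^2+\va^2)^{\gamma/2}$ and $B(p) = I + (p-2)(p\otimes p)/(|p|^2+\va^2)$, uniformly elliptic with eigenvalues in $[\min(1,p-1),\max(1,p-1)]$. Subtracting the equations at $\bar x$ and $\bar y$ yields
\[
a(p)\,\tr\bigl(B(p)(X-Y)\bigr) + \tr\bigl([a(p)B(p) - a(q)B(q)]\,Y\bigr) = O(L_2).
\]
Since $p$ is nearly parallel to $e$, $\langle B(p)e,e\rangle \ge c_0 := \min(1,p-1)/2 > 0$, so the first term is at most $-4c_0\, a(p)\,L_1/|z| + O(a(p)L_2)$. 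Combined with $a(p)\sim (L_1\log L_1)^\gamma$ and $1/|z|\sim L_1\log L_1$ at the max, this is of order $-L_1^{2+\gamma}(\log L_1)^{1+\gamma}$, which diverges to $-\infty$ as $L_1\to\infty$ precisely when $\gamma>-2$. The error term is handled by $|\nabla(aB)(p)|\lesssim |p|^{\gamma-1}$, giving coefficient mismatch $O(L_2|p|^{\gamma-1})$, while $|Y|$ is controlled via its matrix-inequality lower bound together with an upper bound on $\tr(B(q)Y)$ obtained from the analogous one-sided bound on $\tr(B(p)X)$ (itself coming from $X\le A + L_2 I$, where $A$ is the Hessian of $L_1\phi(|\cdot|)$ at $z$). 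For $L_1$ chosen sufficiently large, the main negative term overwhelms the error terms and the bounded right-hand side, yielding the contradiction.

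The step I expect to be the main obstacle is controlling the coefficient-mismatch error $\tr([a(p)B(p)-a(q)B(q)]Y)$, since $Y$ has no a priori upper bound purely from the maximum condition -- unlike the uniformly elliptic setting of \cite{JS}, here $a(p)B(p)$ varies with $\nabla u$ and its $p$-derivative becomes $|p|^{\gamma-1}$-large. Making this work requires the uniform ellipticity of $B$ itself (direction-independent), the strong one-sided negative bound on $X-Y$ in the privileged direction $e$, and the precise scaling $|z|\sim 1/(L_1\log L_1)$, which is what singles out the range $\gamma>-2$.
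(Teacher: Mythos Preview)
Your proposal is correct and follows essentially the same approach as the paper: the same doubling-of-variables function $\Phi$ with $\phi(r)=-r\log r$, the same localization via $L_2$, the same matrix inequality yielding the crucial negative eigenvalue $4L_1\phi''(|z|)$ in the $e$-direction, the same decomposition into a main term $\tr(B(q)(X-Y))$ and a coefficient-mismatch term, and the same mechanism for bounding $|X|,|Y|$ by combining the one-sided Hessian inequality with the equation itself. Your identification of the mismatch term as the delicate point, and of the exponent count leading to $\gamma>-2$, matches the paper exactly.
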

\begin{proof} Without loss of generality, we assume $x_0=0$ and
  $t_0=0$. It is sufficient to prove that
\[ 
M := \max_{-1\le t\le 0,\  x,\,y \in \overline{B_1}} \big\{ u(t,x) -u(t,y) 
- L_1 \phi (|x-y|) -  \frac{L_2}2 |x|^2-  \frac{L_2}2 |y|^2 - \frac{L_2}2 t^2 \big\}
\]
is non-positive, where
\[ \phi (r) = \begin{cases} 
-r \log r & \text{ for } r \in [0,e^{-1}] \\
e^{-1} & \text{ for } r \ge e^{-1}.
\end{cases} \]   

We assume this is not true and we will exhibit a contradiction.  In
the rest of the proof, $t \in [-1,0]$ and $x,y
\in \overline{B}_1$ denote the points realizing the maximum
defining $M$.

Since $M \geq 0$, we have 
\[ L_1\phi (|x-y|) + \frac{L_2}2 (|x|^2 +|y|^2+ t^2) \le 2 \|u\|_{L^\infty(Q_4)}. \]
In particular, 
\begin{equation}\label{estim:penal1}
  \phi (\delta) \le \frac{2 \|u\|_{L^\infty(Q_4)}}{L_1},\quad\mbox{where }\delta = |a|  \quad  \text{ and }  \quad a = x-y,
\end{equation}
 and 
\begin{equation}\label{estim:penal2}
 |t|+|x|+|y| \le 6  \sqrt{\frac{\|u\|_{L^\infty(Q_4)}}{L_2}} .
\end{equation}
Hence, for $L_2$ large enough, depending only on
$\|u\|_{L^\infty(Q_4)}$, we can ensure that $t \in (-1,0]$
and $x,y \in B_1$.  We choose $L^2$ here and fix it for the rest of the proof. Thus, from now on $L_2$ is a constant depending only on $\|u\|_{L^\infty}$.

Choosing $L_1$ large, we can ensure that $\delta(<e^{-2})$ is small enough to satisfy
\[ \phi(\delta) \ge 2\delta. \] 
In this case, \eqref{estim:penal1} implies
\begin{equation}\label{estim:penal1-bis}
 \delta \le \frac{ \|u\|_{L^\infty(Q_4)}}{L_1}.
\end{equation}

Since $t \in [-1,0]$ and $x,y
\in B_1$ realizing the supremum defining $M$, we have that
\begin{align}
\nabla u(t,x)&=L_1\phi'(\delta)\hat a+L_2x\nonumber\\
\nabla u(t,y)&=L_1\phi'(\delta)\hat a-L_2y\nonumber\\
u_t(t,x)-u_t(t,y)&= L_2t \nonumber\\
\begin{bmatrix}
\nabla^2u(t,x) & 0 \\
0 & -\nabla^2u(t,y) \end{bmatrix} 
& \le L_1 
\begin{bmatrix} Z& -Z \\-Z&Z \end{bmatrix} + L_2 I\label{mi},
\end{align}
where 
\[ 
Z = \phi''(\delta) \hat a \otimes \hat a + \frac{\phi'(\delta)}{\delta} (I - \hat a \otimes \hat a)
\quad\mbox{and}\quad \hat a=\frac{a}{|a|}=\frac{x-y}{|x-y|}. 
\]
For $z\in\R^n$, we let 
\[ 
A (z) = I + (p-2)\frac{z_iz_j}{|z|^2+\va^2},
\]
and $q=L_1\phi'(\delta)\hat a, X=\nabla^2u(t,x)$ and
$Y=\nabla^2u(t,y)$. By evaluating the equation at $(t,x)$ and $(t,y)$,
we have
\begin{equation}\label{eq:evaluate the equation}
  L_2t\le (|q+L_2x|^2+\va^2)^{\frac{\gamma}{2}}\tr\Big(A(q+L_2x)X\Big)
-(|q-L_2y|^2+\va^2)^{\frac{\gamma}{2}}\tr\Big(A(q-L_2y)Y\Big).
\end{equation}
Whenever we write $C$ in this proof, we denote a positive constant, large enough depending only on $n,p,\gamma$ and $\|u\|_{L^\infty(Q_4)}$, which may vary from lines to lines.  Recall that we have already chosen $L_2$ above depending on $\|u\|_{L^\infty}$ only.

Note that $|q| = L_1 |\phi'(\delta)|$. Choosing $L_1$ large enough, $\delta$ will be small, $|\phi'(\delta)|$ will thus be large, and $|q| \gg L_2$. In particular,
\begin{equation}\label{eq:gradient big}
|q|/2\le |q+L_2 x| \le 2 |q|\quad \mbox{and} \quad |q|/2\le |q-L_2 y| \le 2 |q|.
\end{equation}

From \eqref{mi} and the fact that $\phi''(\delta)<0$, we have
\begin{equation}\label{eq:hession one bound}
  \begin{split}
    X=\nabla^2u(t,x)&\le L_1 \frac{\phi'(\delta)}{\delta} (I - \hat a \otimes \hat a)+L_2I\\
    -Y=-\nabla^2u(t,y)&\le L_1 \frac{\phi'(\delta)}{\delta} (I - \hat
    a \otimes \hat a)+L_2I
  \end{split}
\end{equation}
Making use of \eqref{eq:evaluate the equation}, \eqref{eq:gradient
  big} and \eqref{eq:hession one bound}, we have
\[
\begin{split}
  \tr\Big(A(q+L_2x)X\Big)&= (|q+L_2x|^2+\va^2)^{-\frac{\gamma}{2}}L_2t
  +\left(\frac{|q-L_2y|^2+\va^2}{|q+L_2x|^2+\va^2}\right)^{\frac{\gamma}{2}}\tr\Big(A(q-L_2y)Y\Big)\\
  &\ge  -C \left( |q|^{-\gamma} + L_1\frac{\phi'(\delta)}{\delta} + 1 \right).
\end{split}
\]
Therefore, it follows from \eqref{eq:hession one bound} and the
ellipticity of $A$ that
\begin{equation}\label{eq:X-norm}
|X|\le C  \left( |q|^{-\gamma}+L_1\frac{\phi'(\delta)}{\delta}+1 \right)
\end{equation}
Similarly,
\[
|Y|\le C  \left( |q|^{-\gamma}+L_1\frac{\phi'(\delta)}{\delta}+1 \right)
\]
Let
\[ B(z) = \left(|z|^2+\eps \right)^\gamma A(z).\]
We get from \eqref{eq:evaluate the equation} and \eqref{estim:penal2}
the following inequality
\begin{equation}\label{inter1}
-C \le  \tr [ B(q+L_2 x) X ] -  \tr [ B (q-L_2 y) Y ] \le T_1 + T_2 
\end{equation}
where 
\[ T_1=  \tr [B(q-L_2 y) (X-Y) ] \quad \text{ and } \quad T_2 =  |X||B (q+L_2 x) - B(q-L_2 y))|.\]

We first estimate $T_2$. Using successively \eqref{estim:penal2},
\eqref{eq:gradient big}, \eqref{eq:X-norm} and mean value theorem, we
get
\begin{align}
 T_2 &\le C |X| |q|^{\gamma-1}  |x+y| \nonumber\\
& \le C |X| |q|^{\gamma-1} \label{eq:T2} \nonumber \\
\nonumber
&\le  C \left(|q|^{-\gamma}+\frac{L_1\phi'(\delta)}\delta + 1 \right)|q|^{\gamma-1}, \\
&\le C  \left(|q|^{-1}+\frac{|q|^\gamma}\delta + |q|^{\gamma-1} \right). 
\end{align}

We now turn to $T_1$. On one hand, evaluating \eqref{mi} with respect to a vector of the form $(\xi,\xi)$, we get that for all $\xi \in \R^d$ we have
\begin{equation}\label{mi-plus}
    (X-Y) \xi \cdot \xi  \le  2 L_2 |\xi|^2.
\end{equation}
On the other hand, when we evaluate \eqref{mi} with respect to $(\hat a, \hat a)$, we get,
\begin{equation}\label{mi-plus2}
    (X-Y) \hat a \cdot \hat a  \le  4 L_1 \phi''(\delta) + 2 L_2
\end{equation}

The inequality \eqref{mi-plus} tells us that all eigenvalues of $ (X-Y)$ are bounded above by a constant $C$. The inequality \eqref{mi-plus2} tells us that there is at least one eigenvalue that is less than the negative number $4 L_1 \phi''(\delta) + 2 L_2$. Because of the uniform ellipticity of $A$, we obtain
\[
T_1 \leq C |q|^\gamma \left( L_1 \phi''(\delta) + 1 \right).
\]

In view of the estimates for $T_1$ and $T_2$, we finally get from
\eqref{inter1} that

\begin{align}
   -L_1 \phi''(\delta)|q|^\gamma &\le C \left( |q|^\gamma +|q|^{-1}+\frac{|q|^\gamma}\delta
      + |q|^{\gamma-1} + 1 \right), \nonumber \\
\intertext{or equivalently,}
   -L_1 \phi''(\delta) &\le C \left( 1+|q|^{-1-\gamma}+\frac{1}\delta
      + |q|^{-1} + |q|^{-\gamma} \right).\label{eq:aux-contra}
\end{align}
Our purpose is to choose $L_1$ large in order to get a contradiction in \eqref{eq:aux-contra}.

Recall that we have the estimate $\delta \leq C / L_1$. From our choice of $\phi$, $\phi'(\delta) > 1$ for $\delta$ small and $-\phi''(\delta) = 1/\delta \geq c L_1$.

For $L_1$ sufficiently large, since $\gamma > -2$
\begin{align*} 
 C(1+|q|^{-1-\gamma}  + |q|^{-1} + |q|^{-\gamma}) &\leq C \left( 1+ L_1^{-1-\gamma} + L_1^{-1} + L_1^{-\gamma} \right) \leq \frac c 2 L_1^2 , \\ 
 &\leq -\frac 12 L_1 \phi''(\delta).
\end{align*}

The remaining term is handled because of the special form of the function $\phi$. We have
\[ -L_1 \phi''(\delta) = \frac{ L_1} \delta > \frac {2C} \delta,\]
for $L_1$ sufficiently large.

Therefore, we reached a contradiction. The proof of this lemma is thereby completed.
\end{proof}

By letting $t=t_0$ and $y=x_0$ in Lemma \ref{lem:log-lip}, and since $(x_0,t_0)$ is arbitrary, we have
\begin{corollary}\label{cor:log-lip}
  Let $u$ be a smooth solution of \eqref{eq:main va} in $Q_4$ with
  $\gamma>-2$ and $\va\in (0,1)$. Then there exists a positive
  constant $C$ depending only on $n, \gamma, p$ and
  $\|u\|_{L^\infty(Q_4)}$ such that for every $(t,x), (t,y)\in Q_3$
  and $|x-y|<1/2$, we have
\[
|u(t,x)-u(t,y)|\le C|x-y||\log|x-y||.
\]
\end{corollary}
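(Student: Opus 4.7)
The plan is to read off the corollary directly from Lemma~\ref{lem:log-lip} by a judicious choice of parameters, followed by a short algebraic absorption. Given a target base point $(t_0,x_0)$, I will set $t=t_0$ and $y=x_0$ in the lemma's inequality, which simultaneously kills the $(t-t_0)^2$ and $|y-x_0|^2$ penalty terms and leaves
\[
u(t_0,x) - u(t_0,x_0) \;\le\; L_1\, |x-x_0|\,|\log|x-x_0||\; +\; \tfrac{L_2}{2}\,|x-x_0|^2
\]
for all $x \in B_1(x_0)$.

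Next I would note that under the restriction $|x-x_0| < 1/2$ we have $|\log|x-x_0|| \ge \log 2$, so the quadratic remainder is dominated by the log-Lipschitz term, reducing the bound to $C|x-x_0|\,|\log|x-x_0||$ for a constant $C$ depending only on the parameters allowed in the statement. The reverse inequality is immediate from the fact that the right-hand side of Lemma~\ref{lem:log-lip} is symmetric in $x$ and $y$ (swap the labels and apply the same substitution), so the bound holds with absolute value on the left.

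To pass from the lemma's formal validity region $Q_1$ to the corollary's region $Q_3$, I will invoke translation invariance of \eqref{eq:main va} together with the observation that the proof of Lemma~\ref{lem:log-lip} uses only the values of $u$ on the small parabolic neighborhood $[t_0-1,t_0]\times \overline{B_1(x_0)}$; for any $(t_0,x_0)\in Q_3$ this neighborhood lies inside $Q_4$, so the lemma applies verbatim with $(t_0,x_0)$ taken at an arbitrary point of $Q_3$. Relabeling $(t_0,x_0)$ as $(t,x)$ and the running point as $y$ finishes the proof.

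No substantive obstacle is expected: once Lemma~\ref{lem:log-lip} is in hand, the corollary is essentially bookkeeping, with the only minor subtlety being the verification that the lemma's proof is local enough to transfer from $Q_1$ to $Q_3$ by translation.
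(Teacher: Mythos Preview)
Your proposal is correct and follows exactly the paper's approach: the paper's entire proof is the single sentence ``By letting $t=t_0$ and $y=x_0$ in Lemma~\ref{lem:log-lip}, and since $(x_0,t_0)$ is arbitrary,'' and you have simply spelled out the details (absorbing the quadratic term via $|\log|x-y||\ge\log 2$, the symmetry in $x,y$, and the translation to $Q_3$). The only cosmetic issue is that your final relabeling sentence reuses the symbol $y$ in a way that collides with the lemma's $y$, but the mathematics is unaffected.
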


We shall make use of the above log-Lipschitz estimate and the
Ishii-Lions' method \cite{IL} again to prove the following Lipschitz
estimate.

\begin{lemma}[Lipschitz estimate]\label{lem:lip}
Let $u$ be a smooth solution of \eqref{eq:main va} in $Q_4$ with $\gamma>-2$ and $\va\in (0,1)$.
  Then there exist two positive constants $L_1$ and $L_2$ depending
  only on $n, p, \gamma$ and $\|u\|_{L^\infty(Q_4)}$ such that for every
  $(t_0,x_0) \in Q_1$, we have
\[
u(t,x) - u(t,y) \le L_1 |x-y| + \frac{L_2}2 |x-x_0|^2+ \frac{L_2}2 |y-x_0|^2 + \frac{L_2}2
(t-t_0)^2 
\]
for all $ t \in [t_0-1,t_0]$ and $x,y \in B_{1/4} (x_0).$
\end{lemma}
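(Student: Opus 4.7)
I would follow the doubling-variables and Ishii–Lions strategy of Lemma~\ref{lem:log-lip}, with two key modifications: the penalty $\phi$ now has linear growth at the origin (as needed for a Lipschitz conclusion rather than log-Lipschitz), and Corollary~\ref{cor:log-lip} is used as an essential new input that sharpens the smallness of $|x-y|$ at the maximizer. Concretely, take $\phi:[0,\infty)\to[0,\infty)$ smooth, strictly concave on a small interval $(0,r_0)$, satisfying $\phi(r)\le r$ and $\phi(r)\ge r/2$ for $r$ near $0$, and constant for $r$ large; a concrete model is $\phi(r)=r-\omega_0 r^{1+\beta}$ for small parameters $\omega_0,\beta\in(0,1)$ to be tuned. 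Set
\[
M := \max_{-1\le t\le 0,\, x,y\in\overline{B_{1/4}}(x_0)}\Big\{u(t,x)-u(t,y)-L_1\phi(|x-y|)-\tfrac{L_2}{2}\big(|x-x_0|^2+|y-x_0|^2+(t-t_0)^2\big)\Big\}
\]
and argue by contradiction that $M>0$ is impossible for all sufficiently large $L_1$.

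First, exactly as in Lemma~\ref{lem:log-lip}, choosing $L_2$ large depending only on $\|u\|_{L^\infty(Q_4)}$ confines the maximizer $(t,x,y)$ to the interior with $\delta:=|x-y|>0$, and the linear lower bound $\phi(\delta)\ge\delta/2$ gives $\delta\le C/L_1$. Then I inject the log-Lipschitz estimate at the maximizer: Corollary~\ref{cor:log-lip} yields $u(t,x)-u(t,y)\le C_0\,\delta|\log\delta|$, which combined with $M>0$ and $\phi(\delta)\ge\delta/2$ produces the sharpened exponential smallness
\[
\delta\le\exp\!\big(-L_1/(2C_0)\big).
\]
This is the new information unavailable in Lemma~\ref{lem:log-lip}. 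The first-order conditions at the maximizer then force $|\nabla u(t,x)|, |\nabla u(t,y)|\approx L_1$, so the matrix $A(\cdot)$ appearing in \eqref{eq:main va} is uniformly elliptic at the two points with constants depending only on $p$.

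The remaining computation mirrors that of Lemma~\ref{lem:log-lip}: the matrix inequality \eqref{mi}, the evaluation of \eqref{eq:main va} at $(t,x)$ and $(t,y)$, and the decomposition of the difference as $T_1+T_2$ all carry over verbatim. Strict concavity $\phi''(\delta)<0$ combined with \eqref{mi-plus2} makes $(X-Y)\hat a\cdot\hat a$ strongly negative and yields a strongly negative contribution in $T_1$; the bound on $T_2$ is identical in form to the log-Lipschitz case. Dividing by $|q|^\gamma\approx L_1^\gamma$ produces the analogue of \eqref{eq:aux-contra},
\[
L_1\,|\phi''(\delta)|\le C\big(1+L_1^{-1-\gamma}+\tfrac{1}{\delta}+L_1^{-1}+L_1^{-\gamma}\big).
\]

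\textbf{Main obstacle.} The closing step is genuinely more delicate than in Lemma~\ref{lem:log-lip}. There, $-\phi''(\delta)=1/\delta$ automatically beats the $C/\delta$ term on the right once $L_1$ is large, so the contradiction is immediate. Here, with linear-growth $\phi$, $|\phi''(\delta)|$ blows up only polynomially as $\delta\to 0^+$ (like $\delta^{\beta-1}$ for the model $\phi$ above), and a na\"ive comparison with the $C/\delta$ term would require a lower bound on $\delta$ that is incompatible with the exponential upper bound from Corollary~\ref{cor:log-lip}. I expect to close the argument by refining the choice of $\phi$ so that $-\phi''(r)$ effectively matches the shape $K/r$ on the tiny interval of $r$-values permitted by the log-Lipschitz bound, while keeping $\phi(r)\le r$ on that interval up to an $O(r|\log r|)$ correction that is itself absorbed by re-applying Corollary~\ref{cor:log-lip}. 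Balancing the concavity of $\phi$ against the right-hand side by exploiting the exponential smallness of $\delta$ is the main technical point of the proof.
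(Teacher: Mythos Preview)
Your framework is correct and essentially matches the paper: same doubling of variables, same choice of penalty $\phi(r)=r-\frac{1}{2-\gamma_0}r^{2-\gamma_0}$ (your $\beta=1-\gamma_0$), and the same decomposition into $T_1+T_2$. You have also correctly identified the obstacle: with a linear-growth $\phi$ one has $|\phi''(\delta)|\sim\delta^{-\gamma_0}$ with $\gamma_0<1$, which cannot beat the $C/\delta$ term on the right of \eqref{eq:aux-contra} uniformly in $\delta$.

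The gap is in how you deploy Corollary~\ref{cor:log-lip}. You use it only to obtain the exponential smallness $\delta\le e^{-cL_1}$, and then keep the $T_2$ estimate ``identical in form to the log-Lipschitz case''. That is the step that fails, and your proposed remedy of refining $\phi$ so that $-\phi''(r)\sim K/r$ would send you back to a log-Lipschitz, not Lipschitz, conclusion. The paper's resolution is different: the log-Lipschitz bound is fed not into the size of $\delta$ but into the size of the \emph{localization variables} $x,y$. From $M\ge 0$ and $u(t,x)-u(t,y)\le C\delta|\log\delta|$ one gets
\[
\frac{L_2}{2}\big(|x|^2+|y|^2+t^2\big)\le C\delta|\log\delta|,
\]
hence $|x+y|\le C\sqrt{\delta|\log\delta|}$. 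This sharpened bound on $|x+y|$ enters the $T_2$ estimate (recall $T_2\le C|X|\,|q|^{\gamma-1}|x+y|$) and multiplies every term there by the small factor $\sqrt{\delta|\log\delta|}$. In particular the dangerous $C/\delta$ becomes $C\sqrt{|\log\delta|}/\sqrt\delta$, and the analogue of \eqref{eq:aux-contra} reads
\[
L_1\,\delta^{-\gamma_0}\le C\Big(1+\sqrt{\delta|\log\delta|}\,\big(|q|^{-1}+|q|^{-1-\gamma}+\delta^{-1}+|q|^{-\gamma}\big)\Big).
\]
Since $\gamma_0>1/2$, the left side dominates $\sqrt{|\log\delta|}/\sqrt\delta$ as $\delta\to 0$, and using $\delta\le C/L_1$ and $|q|\ge L_1/2$ yields the contradiction for $L_1$ large. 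So the missing idea is: use the log-Lipschitz estimate to improve $T_2$, not to squeeze $\delta$.
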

\begin{proof} 
  The proof of this lemma follows the same computations as that of Lemma  \ref{lem:log-lip}, but we make use of the conclusion of
  Corollary \ref{cor:log-lip} in order to improve our estimate.

Without loss of generality, we assume $x_0=0$ and
  $t_0=0$. As before, we define
\[ 
M := \max_{-1\le t\le 0,\  x,\,y \in B_1} \big\{ u(t,x) -u(t,y) 
- L_1 \phi (|x-y|) -  \frac{L_2}2 |x|^2-  \frac{L_2}2 |y|^2 - \frac{L_2}2 t^2 \big\}
\]
is non-positive, where
\[ \phi (r) = \begin{cases} 
r - \frac1{2-\gamma_0} r^{2-\gamma_0} & \text{ for } r \in [0,1] \\
1-\frac1{2-\gamma_0} & \text{ for } r \ge 1
\end{cases} \]   
for some $\gamma_0 \in (1/2,1)$.

We assume this is not true in order to obtain a contradiction.  In
the remaining of the proof of the lemma, $t \in [-1,0]$ and $x,y
\in \overline{B}_{1/4}$ denote the points realizing the maximum
defining $M$.

For the same reasons as in the proof of Lemma \ref{lem:log-lip}, the inequalities \eqref{estim:penal1} and \eqref{estim:penal2}  also apply in this case. Thus, we can use the same choice of $L_2$, depending on $\|u\|_{L^\infty}$ only, that ensures  $t \in (-1,0]$ and $x,y \in B_1$.

From Corollary \ref{cor:log-lip}, we already know that $u(t,x) - u(t,y) \leq C |x-y| |\log |x-y||$. Since $M \geq 0$, 
\begin{equation}\label{eq:use continuity}
L_1\phi (|x-y|) + \frac{L_2}2 (|x|^2 +|y|^2+ t^2) \le C|x-y||\log|x-y||. 
\end{equation}
In particular, we obtain an improvement of  \eqref{estim:penal2},
\begin{equation}\label{estim:penal2-2}
 |t|+|x|+|y| \le C \sqrt{\frac{\delta|\log\delta|}{L_2}} .
\end{equation}
This gives us an upper bound for $|x+y|$ that we can use to improve \eqref{eq:T2}. 
\begin{align*} 
 T_2  &\le C |X| |q|^{\gamma-1} |x+y|, \\
&\le C \left( |q|^{-1} + \frac{|q|^\gamma} \delta + |q|^{\gamma-1} \right) \sqrt{\delta |\log \delta|},
\end{align*}
The estimate for $T_1$ stays unchanged. Hence, \eqref{eq:aux-contra} becomes
\[
\begin{split}
-L_1 \phi''(\delta) &\le C \left( 1 + \sqrt{\delta|\log\delta|} \left(
|q|^{-1} +|q|^{-1-\gamma} +\frac{1} \delta +  |q|^{-\gamma} \right) \right).
\end{split}
\]
Recall that $|q|=L_1\phi'(\delta)\geq L_1 / 2$ and $\phi''(\delta)=(\gamma_0-1)\delta^{-\gamma_0}$. Then,
\[ 
L_1 \delta^{-\gamma_0} \leq C \left( 1 + \sqrt{\delta |\log \delta|} \left( 1+ L_1^{-1} + L_1^{-1-\gamma} + \delta^{-1} + L_1^{-\gamma} \right) \right) 
\]
The term $+1$ inside the innermost parenthesis is there just to ensure that the inequality holds both for $\gamma < 0$ and $\gamma>0$. Recalling that $\delta < C / L_1$, we obtain an inequality in terms of $L_1$ only.
\[ L_1^{1+\gamma_0} \leq C \left( 1 + L_1^{-1/2} \sqrt{\log L_1} \left( 1+ L_1^{-1} + L_1^{-1-\gamma} + L_1 + L_1^{-\gamma} \right) \right) 
\]

Choosing $L_1$ large, we arrive to a contradiction given that $1+\gamma_0 > \max( 1/2, -1/2-\gamma )$ since $\gamma_0 > 1/2$ and $\gamma > -2$.
\end{proof}

Again, by letting $t=t_0$ and $y=x_0$ in Lemma \ref{lem:lip}, and since $(x_0,t_0)$ is arbitrary, we have

\begin{corollary}\label{cor:lip}
  Let $u$ be a smooth solution of \eqref{eq:main va} in $Q_4$ with
  $\gamma>-2$ and $\va\in (0,1)$. Then there exists a positive
  constant $C$ depending only on $n, \gamma, p$ and
  $\|u\|_{L^\infty(Q_4)}$ such that for every $(t,x), (t,y)\in Q_3$
  and $|x-y|<1$, we have
\[
|u(t,x)-u(t,y)|\le C|x-y|.
\]
\end{corollary}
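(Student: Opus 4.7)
The proof is an immediate specialization of Lemma \ref{lem:lip}, so I anticipate no serious obstacle --- it is essentially a book-keeping step that converts the lemma's penalty-function inequality into a standard Lipschitz bound. My plan has three short steps.

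First, in the conclusion of Lemma \ref{lem:lip} I would set $t = t_0$ and $y = x_0$. The two quadratic slack terms $\frac{L_2}{2}(t-t_0)^2$ and $\frac{L_2}{2}|y-x_0|^2$ both vanish, so the inequality collapses to
\[
u(t_0, x) - u(t_0, x_0) \le L_1 |x - x_0| + \frac{L_2}{2}|x - x_0|^2
\qquad \text{for all } x \in B_{1/4}(x_0).
\]
Since $|x - x_0| \le 1/4 \le 1$, the quadratic term is dominated by the linear one, giving $u(t_0,x) - u(t_0,x_0) \le (L_1 + L_2/2)|x-x_0|$.

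Next, because the statement of the lemma is symmetric in its two spatial arguments --- the same inequality applies equally well with the base point moved to the other endpoint --- swapping $x$ and $x_0$ yields the reverse one-sided inequality, and the two combine into the absolute-value bound $|u(t_0, x) - u(t_0, x_0)| \le C|x-x_0|$ with $C := L_1 + L_2/2$, uniform over the allowed base points.

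Finally, to upgrade the local radius $1/4$ to the stated radius $1$, I use a standard chain argument: for any $(t, x), (t, y) \in Q_3$ with $|x-y| < 1$, interpolate a sequence $x = z_0, z_1, \ldots, z_k = y$ along the segment from $x$ to $y$, where $k$ is bounded by an absolute constant and $|z_{i+1} - z_i| < 1/4$, then apply the pointwise estimate to each consecutive pair and sum via the triangle inequality to arrive at $|u(t,x) - u(t,y)| \le C|x-y|$. Since the base point $(x_0,t_0)$ is arbitrary and $C$ depends only on $n, \gamma, p$ and $\|u\|_{L^\infty(Q_4)}$, this gives the claim.
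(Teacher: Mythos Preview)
Your proposal is correct and follows essentially the same approach as the paper, which simply writes ``by letting $t=t_0$ and $y=x_0$ in Lemma~\ref{lem:lip}, and since $(x_0,t_0)$ is arbitrary.'' You have merely made explicit the bookkeeping that the paper omits: absorbing the quadratic penalty term into the linear one, obtaining the two-sided bound by symmetry, and chaining to pass from the local radius $1/4$ to the stated radius $1$.
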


\section{H\"older estimates in the time variable}\label{sec:holder in t}

Using the Lipschitz continuity in $x$ and a simple comparison argument, we show that the solution of \eqref{eq:main va} is H\"older continuous in $t$.


\begin{lemma}\label{lem:holder in t}
Let $u$ be a smooth solution of \eqref{eq:main va} in $Q_4$ with $\gamma>-1$ and $\va\in (0,1)$. 
Then there holds
\[ 
\sup_{t \neq s, (t,x), (s,x)\in Q_1}
\frac{|u(t,x)-u(s,x)|}{|t-s|^{1/2}} \le
C,
\]
where $C$ is a positive constant depending only on $n$, $p$, $\gamma$ and $\|u\|_{L^\infty(Q_4)}$.
\end{lemma}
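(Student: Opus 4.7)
The plan is to prove the H\"older-in-time estimate by a parabolic comparison argument with a ``cone-plateau'' viscosity barrier built from the spatial Lipschitz modulus. Let $L$ denote the Lipschitz constant of $u$ in $x$ on $Q_3$ provided by Corollary~\ref{cor:lip}, enlarged if necessary to ensure $L \ge 2\|u\|_{L^\infty(Q_4)}$. Fix $(t_0,x_0)\in Q_1$ and $\tau\in(0,1]$ with $(t_0+\tau,x_0)\in Q_1$. I will prove the upper estimate $u(t_0+\tau,x_0)-u(t_0,x_0)\le C\sqrt{\tau}$; the lower bound is symmetric. The barrier is
\[
\phi(t,x) := u(t_0,x_0) + L\max(|x-x_0|,\mu) + K(t-t_0),
\]
with parameters $\mu>0$ and $K=C_\star L^{1+\gamma}/\mu$ to be determined.

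The central step is to check that $\phi$ is a viscosity supersolution of \eqref{eq:main va} on $[t_0,t_0+\tau]\times B_2(x_0)$. Three regions must be considered. On the smooth exterior $\{|x-x_0|>\mu\}$, $|\nabla\phi|=L$ and $\nabla^2\phi$ has eigenvalues $0$ (radial) and $L/|x-x_0|$ (tangential, multiplicity $n-1$); a direct calculation gives
\[
F(\nabla\phi,\nabla^2\phi) = (L^2+\eps^2)^{\gamma/2}(n-1)L/|x-x_0| \le C L^{1+\gamma}/\mu,
\]
where the prefactor $(L^2+\eps^2)^{\gamma/2}$ is bounded uniformly in $\eps\in(0,1)$ by $L^\gamma$ when $\gamma<0$ and by $(L^2+1)^{\gamma/2}$ when $\gamma\ge 0$. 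On the plateau $\{|x-x_0|<\mu\}$, $\phi$ is constant in $x$, so $\nabla\phi=\nabla^2\phi=0$ and $F(0,0)=0$. At the kink $\{|x-x_0|=\mu\}$, no smooth test function $\psi$ can touch $\phi$ from above, because its outward radial derivative would simultaneously need to satisfy $\partial_r\psi\ge L$ (from the exterior) and $\partial_r\psi\le 0$ (from the plateau), which is impossible; the supersolution condition is therefore vacuously satisfied.

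Next, $\phi\ge u$ on the parabolic boundary of $[t_0,t_0+\tau]\times B_2(x_0)$: at $t=t_0$ this is the Lipschitz bound, and on $\partial B_2(x_0)$ one has $\phi\ge u(t_0,x_0)+2L\ge\|u\|_{L^\infty(Q_4)}\ge u$ by the choice $L\ge 2\|u\|_{L^\infty(Q_4)}$. The comparison principle for \eqref{eq:main va} between the smooth classical solution $u$ and the viscosity supersolution $\phi$ yields $u\le\phi$ throughout the cylinder. Evaluating at $(t_0+\tau,x_0)$ gives $u(t_0+\tau,x_0)-u(t_0,x_0)\le L\mu + C_\star L^{1+\gamma}\tau/\mu$, and optimizing in $\mu\sim\sqrt{L^\gamma\tau}$ produces the desired bound $CL^{1+\gamma/2}\sqrt{\tau}$. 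The matching lower estimate follows from the symmetric viscosity subsolution $\psi(t,x)=u(t_0,x_0)-L\max(|x-x_0|,\mu)-K(t-t_0)$.

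The principal obstacle is $\eps$-uniform control of the singular factor $(|\nabla\phi|^2+\eps^2)^{\gamma/2}$ when $\gamma<0$. The naive smooth radial barrier $L\sqrt{|x-x_0|^2+\mu^2}$ fails, since $\nabla\phi(x_0)=0$ forces the coefficient to blow up like $\eps^\gamma$ as $\eps\to 0$. The cone-plateau design sidesteps this cleanly: either $|\nabla\phi|=L$ (so the coefficient is $\eps$-uniformly bounded by $L^\gamma$) or $\nabla^2\phi=0$ (so $F$ vanishes identically despite the singular coefficient), and the non-smooth kink is absorbed by the viscosity formalism.
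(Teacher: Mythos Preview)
Your barrier $\phi$ is \emph{not} a viscosity supersolution at the kink $\{|x-x_0|=\mu\}$: you have tested in the wrong direction. For a viscosity supersolution of $u_t=F(\nabla u,D^2u)$ one must check every smooth $\psi$ touching $\phi$ from \emph{below} (that is, $\phi-\psi$ has a local minimum); touching from above is the subsolution test.

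The kink is a \emph{convex} corner of $\phi$, since $\max(|x-x_0|,\mu)$ is a pointwise maximum of two functions. At a convex corner there are many smooth functions touching from below. Concretely, at a kink point $x_1$ with outward unit radial direction $\nu$, the function
\[
\psi(x,t)=u(t_0,x_0)+L\mu+\tfrac{L}{2}\,(x-x_1)\cdot\nu+\tfrac{M}{2}\big((x-x_1)\cdot\nu\big)^2+K(t-t_0)
\]
touches $\phi$ from below at $(x_1,t_1)$ for every $M>0$, in a neighborhood of radius of order $L/M$. For this $\psi$ one has $|\nabla\psi|=L/2$, $D^2\psi=M\,\nu\otimes\nu$, $\psi_t=K$, and hence
\[
\psi_t-F_\va(\nabla\psi,D^2\psi)\;\le\;K-c\,L^{\gamma}(p-1)\,M,
\]
which is negative once $M$ is large. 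So $\phi$ fails the supersolution test at the kink, and the comparison step collapses. The same defect, with signs reversed, breaks your candidate subsolution.

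The paper avoids this by using a \emph{smooth} radial barrier $\eta+L_1(t-t_0)+L_2|x-x_0|^\beta$ with $\beta=\max\!\big(2,\tfrac{2+\gamma}{1+\gamma}\big)$. For $\gamma\le 0$ this exponent is chosen exactly so that $|\nabla\varphi|^\gamma|D^2\varphi|$ is independent of $|x-x_0|$, providing the uniform control your plateau was meant to supply, but without a kink; for $\gamma>0$ one takes $\beta=2$ and uses that the linearized coefficients are bounded thanks to Corollary~\ref{cor:lip}. Your diagnosis of the difficulty (the singular prefactor when $\gamma<0$) is right; the remedy is to tune the exponent of a smooth profile rather than to introduce a non-smooth flat region.
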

\begin{remark}
Deriving estimates in the time variable for estimates
  in the space variable by maximum principle techniques is
  classical. As far as viscosity solutions are concerned, the reader
  is referred to \cite[Lemma~9.1,~p.~317]{bbl} for instance.
\end{remark}

\begin{proof}
Let $\beta = \max(2, (2+\gamma)/(1+\gamma))$.  We claim that for all $t_0\in[-1,0)$, $\eta >0$, there exists
  $L_1>0$ and $L_2>0$ such that
\begin{equation}\label{eq:t-holder}
u(t,x) - u(t_0,0) \le \eta + L_1(t-t_0) + L_2 |x|^\beta =: \varphi(t,x) \quad\mbox{for all }(t,x)\in [t_0,0]\times\overline B_1.
\end{equation}
We first choose $L_2\ge 2\|u\|_{L^\infty(Q_3)}$ such that
\eqref{eq:t-holder} holds true for $x \in \partial B_1$. We will next
choose $L_2$ such that \eqref{eq:t-holder} holds true for $t=t_0$. In
this step we shall use Corollary \ref{cor:lip} that $u$ is Lipschitz
continuous with respect to the spatial variables. From Corollary \ref{cor:lip}, $\|\nabla u\|_{L^\infty(Q_3)}$ is bounded depending on $\|u\|_{L^\infty(Q_4)}$ only.
It is enough to choose
\[ \| \nabla u\|_{L^\infty(Q_3)} |x| \le \eta + L_2 |x|^\beta \]
which holds true if 
\[ L_2 \ge \frac{\|\nabla u\|_{L^\infty(Q_3)}^\beta}{\eta^{\beta-1}}. \] 
We finally choose $L_1$ such that the function $\varphi(t,x)$ is a supersolution of an equation that $u$ is a solution. The inequality \eqref{eq:t-holder} thus follows from the comparison principle. We use a slightly different equation depending on whether $\gamma \leq 0$ or $\gamma > 0$.

Let us start with the case $\gamma \leq 0$. In this case we will prove that $\varphi$ is a supersolution of the nonlinear equation \eqref{eq:main va}. That is
\begin{equation} \label{e:phisuper1}
 \varphi_t - (\eps^2 + |\nabla \varphi|^2)^{\gamma/2} \left( \delta_{ij} + (p-2) \frac{\varphi_i \varphi_j}{\eps^2 + |\nabla \varphi|^2} \right) \varphi_{ij} > 0.
\end{equation}
In order to ensure this inequality, we choose $L_1$ so that
\[ L_1 > (p-1) |\nabla \varphi|^\gamma |D^2 \varphi| \geq (\eps^2 + |\nabla \varphi|^2)^{\gamma/2} \left( \delta_{ij} + (p-2) \frac{\varphi_i \varphi_j}{\eps^2 + |\nabla \varphi|^2} \right) \varphi_{ij}.\]
We chose the exponent $\beta$ so that when $\gamma \leq 0$, $|\nabla \varphi|^\gamma |D^2 \varphi| = C L_1^{1+\gamma}$ for some constant $C$ depending on $n$ and $\gamma$. Thus, we must choose $L_1 = C L_2^{1+\gamma}$ in order to ensure \eqref{e:phisuper1}.

Therefore, still for the case $\gamma \leq 0$, $\beta = (2+\gamma)/(1+\gamma)$, and for any choice of $\eta>0$, using the comparison principle,
\[ 
\begin{split}
u(t,0)-u(t_0,0) 
&\le \eta + C\Big(\eta^{(1-\beta)} \|\nabla u\|_{L^\infty(Q_3)}^{\beta} +2\|u\|_{L^\infty(Q_3)}+\va\Big)^{\gamma+1}(t-t_0)\\
&\le \eta + C\eta^{-1} \|\nabla u\|_{L^\infty(Q_3)}^{\gamma+2}|t-t_0|+C(\|u\|_{L^\infty(Q_3)}+\va)^{\gamma+1}|t-t_0|
\end{split}
\]
By choosing $\eta=\|\nabla u\|_{L^\infty(Q_3)}^{\gamma/2+1} |t-t_0|^{1/2}$, it follows that for $t\in (t_0,0]$,
\[
  u(t,0)-u(t_0,0) \le C \left( \|\nabla u\|_{L^\infty(Q_3)}\right)^{\frac{\gamma+2}2}|t-t_0|^{1/2} +C\left( \|u\|_{L^\infty(Q_3)} + \eps \right) ^{\gamma+1}|t-t_0|.
\] 
The lemma is then concluded in the case $\gamma \leq 0$.

Let us now analyze the case $\gamma > 0$. In this case, we prove that $\varphi$ is a supersolution to a linear parabolic equation whose coefficients depend on $u$. That is
\begin{equation} \label{e:phisuper2}
 \varphi_t - (\eps^2 + |\nabla u|^2)^{\gamma/2} \left( \delta_{ij} + (p-2) \frac{u_i u_j}{\eps^2 + |\nabla u|^2} \right) \varphi_{ij} > 0.
\end{equation}

Since $\gamma > 0$ and $\nabla u$ is known to be bounded after Corollary \ref{cor:lip}, we can rewrite the equation assumption
\begin{equation} \label{e:phisuper2p}
 \varphi_t - a_{ij}(t,x) \varphi_{ij} > 0,
\end{equation}
where the coefficients $a_{ij}(t,x)$ are bounded by
\[ |a_{ij}(t,x)| \leq C \left( \eps +  \|\nabla u\|_{L^\infty(Q_3)} \right)^{\gamma}.\]
Since $\gamma > 0$, we pick $\beta = 2$ and $D^2 \varphi$ is a constant multiple of $L_2$. In particular, we ensure that \eqref{e:phisuper2p} holds if
\[ L_1 > C \left( \eps +  \|\nabla u\|_{L^\infty(Q_3)} \right)^{\gamma} L_2.\]
Therefore, for the case $\gamma > 0$, $\beta = 2$, and for any choice of $\eta>0$, using the comparison principle,
\[ 
\begin{split}
u(t,0)-u(t_0,0) 
\le \eta + C \left( \eps +  \|\nabla u\|_{L^\infty(Q_3)} \right)^{\gamma} \left( \eta^{-1} \|\nabla u\|_{L^\infty(Q_3)}^2  + \|u\|_{L^\infty(Q_3)} \right) (t-t_0).
\end{split}
\]
Choosing $\eta = \left( \eps +  \|\nabla u\|_{L^\infty(Q_3)} \right)^{\gamma/2+1} (t-t_0)^{1/2}$, we obtain,
\[ 
\begin{aligned}
u(t,0)-u(t_0,0) \leq C & \left( \eps +  \|\nabla u\|_{L^\infty(Q_3)} \right)^{\gamma/2+1} (t-t_0)^{1/2} + \\
& C \left( \eps +  \|\nabla u\|_{L^\infty(Q_3)} \right)^{\gamma} \|u\|_{L^\infty(Q_3)} (t-t_0).
\end{aligned}
\]
This finishes the proof for $\gamma > 0$ as well.
\end{proof}

\section{H\"older estimates for the spatial gradients}\label{sec:holder gradient}

In this section, we assume that $\gamma>-1$ so that Corollary
\ref{cor:lip} and Lemma \ref{lem:holder in t} holds, that is, the
solution of \eqref{eq:main va} in $Q_2$ has uniform interior Lipschitz
estimates in $x$ and uniform interior H\"older estimates in $t$, both
of which are independent of $\va\in (0,1)$. For $\rho, r>0$, we denote
\[
Q_r=B_r\times(-r^2,0],\quad Q^\rho_r=B_r\times (-\rho^{-\gamma}r^2,0].
\]
The cylinders $Q_r^\rho$ are the natural ones that correspond to the two-parameter family of scaling of the equation. Indeed, if $u$ solves \eqref{eq:main va} in $Q_r^\rho$ and
we let $v(x,t)=\frac{1}{r\rho}u(rx, r^2\rho^{-\gamma}t)$, then
\[
v_t(t,x)=\left(|\nabla v|^2+\va^2\rho^{-2}\right)^{\gamma/2}
\left(\Delta v+(p-2)\frac{v_iv_j}{|\nabla v|^2+\va^2 \rho^{-2}}
  v_{ij}\right)\quad\mbox{in } Q_1.
\]

If we choose
$\rho \geq \|\nabla u\|_{L^\infty(Q_{1})}+1$, we may assume that the solution
of \eqref{eq:main va} satisfies $|\nabla u|\le 1$ in $Q_1$.

We are going to show that $\nabla u$ is H\"older continuous in
space-time at the point $(0,0)$. The idea of the proof in this step is
similar to that in \cite{JS}. First we show that if the projection of
$\nabla u$ onto the direction $e\in\mathbb S^{n-1}$ is away from $1$
in a positive portion of $Q_1$, then $\nabla u\cdot e$ has improved
oscillation in a smaller cylinder.

\begin{lemma}\label{lem:osc2}
  Let $u$ be a smooth solution of \eqref{eq:main va} with
  $\va\in (0,1)$ such that $|\nabla u|\le 1$ in $Q_1$.  For every
  $\frac 12<\ell<1$, $\mu>0$, there exists $\tau_1\in (0,\frac 14)$
  depending only on $\mu, n$, and there exist $\tau, \delta>0$
  depending only on $n,p,\gamma, \mu$ and $\ell$ such that for
  arbitrary $e\in\mathbb{S}^{n-1}$, if
\begin{equation}\label{eq:proj on e positive}
|\{(x,t)\in Q_1: \nabla u\cdot e\le \ell\}|> \mu |Q_1|,
\end{equation}
then
\[
\nabla u\cdot e< 1-\delta\quad\mbox{in }Q^{1-\delta}_{\tau},
\]
and $Q^{1-\delta}_{\tau}\subset Q_{\tau_1}$.
\end{lemma}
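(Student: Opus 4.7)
My plan is a diminish-of-oscillation argument based on linearizing \eqref{eq:main va} in the direction $e$. Write $w = \nabla u\cdot e$ and $v = 1-w$; since $|\nabla u|\le 1$ in $Q_1$, we have $v\ge 0$ on $Q_1$, the hypothesis reads $|\{v\ge 1-\ell\}\cap Q_1|\ge \mu|Q_1|$, and the conclusion $\nabla u\cdot e<1-\delta$ on $Q_\tau^{1-\delta}$ is exactly $v\ge\delta$ there.

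Differentiating \eqref{eq:main va} in the direction $e$, the function $v$ satisfies a linear parabolic equation
\[
v_t = a_{ij}(\nabla u)\,v_{ij} + b_k(\nabla u, D^2 u)\,v_k, \qquad a_{ij}(q)=(|q|^2+\va^2)^{\gamma/2}\Bigl[\delta_{ij}+(p-2)\tfrac{q_iq_j}{|q|^2+\va^2}\Bigr].
\]
The eigenvalues of $a_{ij}(q)$ lie between $c(p)(|q|^2+\va^2)^{\gamma/2}$ and $C(p)(|q|^2+\va^2)^{\gamma/2}$, so the equation is uniformly elliptic precisely in the region where $|\nabla u|$ is bounded away from zero. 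The anisotropic cylinder $Q_\tau^{1-\delta}=B_\tau\times(-(1-\delta)^{-\gamma}\tau^2,0]$ is chosen as the natural parabolic rescaling for the equation when $|\nabla u|\simeq 1-\delta$, so that the time factor $(1-\delta)^{-\gamma}$ cancels the $(1-\delta)^{\gamma}$ appearing in the diffusion.

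The argument then splits into two stages. First, a purely measure-theoretic localization (e.g.\ a Lebesgue-density or simple covering argument on the parabolic cylinder) extracts $\tau_1=\tau_1(n,\mu)$ such that the hypothesis quantitatively persists inside $Q_{\tau_1}$; this stage ignores the PDE entirely, which is why $\tau_1$ is independent of $p$ and $\gamma$. Second, inside $Q_{\tau_1}$ one promotes the density information to the pointwise bound on $Q_\tau^{1-\delta}$. The natural tools are a parabolic weak Harnack / Krylov--Safonov $L^\va$ estimate for the nonnegative supersolution $v$, combined with a barrier: construct a smooth $\psi$ that is a subsolution of the linearized equation, satisfies $\psi\le v$ on the parabolic boundary of $Q_\tau^{1-\delta}$ inside $Q_{\tau_1}$, and obeys $\psi\ge\delta$ on $Q_\tau^{1-\delta}$; comparison then yields $v\ge\psi\ge\delta$.

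The main obstacle is precisely the non-uniform ellipticity: $a_{ij}$ degenerates (for $\gamma>0$) or is singular (for $\gamma<0$) wherever $|\nabla u|$ is small, and the standard parabolic regularity theory does not apply globally. The resolution is self-consistent: on the set $\{v\le h\}$ we automatically have $|\nabla u|\ge \nabla u\cdot e = 1-v \ge 1-h$, so the linearized equation is uniformly elliptic there with constants comparable to $(1-h)^\gamma$, which is exactly what the time-scaling of $Q_\tau^{1-\delta}$ absorbs. The technical heart of the proof is choosing the thresholds $h$, $\delta$, $\tau$ compatibly, so that the inclusion $Q_\tau^{1-\delta}\subset\{v<h\}$ propagates throughout the barrier comparison, and both the weak Harnack step and the barrier construction can be performed inside the uniformly-elliptic region; once these thresholds are matched, the argument reduces to a standard uniformly-parabolic improvement-of-oscillation.
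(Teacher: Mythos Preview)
Your outline has the right architecture (linearize in the direction $e$, exploit that $|\nabla u|$ is bounded below on the relevant set, invoke a weak Harnack / measure-to-pointwise estimate), but there is a genuine gap. The linearized equation for $v=1-\nabla u\cdot e$ reads
\[
v_t = a_{ij}(\nabla u)\,v_{ij} + b_k\,v_k, \qquad b_k = -\,\partial_{q_m}a_{ij}(\nabla u)\,u_{ij}\,\delta_{mk},
\]
and the drift $b_k$ contains $D^2u$, for which you have no a priori bound. Neither a Krylov--Safonov $L^\va$/weak-Harnack estimate nor a barrier comparison is available for a supersolution of an equation whose first-order coefficients are unbounded; your ``self-consistent'' observation that $|\nabla u|\ge 1-h$ on $\{v\le h\}$ fixes the ellipticity of $a_{ij}$ but does nothing for $b_k$. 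This is not a detail you can sweep into ``matching thresholds'': without controlling the drift, the whole second stage collapses.

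The paper's resolution is a Bernstein-type trick that your plan is missing. Instead of $v$, one works with
\[
w=\bigl(\nabla u\cdot e-\ell+\rho\,|\nabla u|^2\bigr)^+,\qquad \rho=\ell/4.
\]
On $\{w>0\}$ one automatically has $|\nabla u|>\ell/2$ (giving uniform ellipticity), and the added term $\rho|\nabla u|^2$ contributes $-2\rho\,a_{ij}u_{ki}u_{kj}\le -c|D^2u|^2$ to the equation for $w$. Young's inequality then absorbs the bad drift $a_{ij,m}u_{ij}w_m$ into this good term, leaving
\[
w_t\le a_{ij}w_{ij}+c_1(\ell)|\nabla w|^2.
\]
A Hopf--Cole change $\overline w=\tfrac{1}{c_2}(1-e^{c_2(w-W)})$ removes the quadratic gradient, and one applies a single weak-Harnack estimate (Proposition~2.3 of \cite{JS}) to the nonnegative supersolution $\overline w$. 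Note also that $\tau_1$ in the paper is \emph{not} obtained by a separate measure-theoretic localization; it is the geometric scale produced by that weak-Harnack step, which is why it depends only on $\mu$ and $n$ while the ellipticity constants (hence $p,\gamma,\ell$) enter only through the value $\nu$ (and thus $\delta$).
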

\begin{proof}
Let
\begin{equation}\label{eq:coefficients}
a_{ij}(q)=(|q|^2+\va^2)^{\gamma/2} \left(\delta_{ij}+(p-2)\frac{q_iq_j}{|q|^2+\va^2}\right),\ q\in\R^n
\end{equation}
and denote
\[
a_{ij,m}=\frac{\partial a_{ij}}{\partial q_m}.
\]
Differentiating \eqref{eq:main va} in $x_k$, we have
\[
(u_k)_t=a_{ij}\big(u_{k})_{ij}+a_{ij,m}u_{ij} (u_{k})_m.
\]
Then 
\[
(\nabla u\cdot e-\ell)_t=a_{ij}\big(\nabla u\cdot
e-\ell)_{ij}+a_{ij,m}u_{ij} (\nabla u\cdot e-\ell)_m
\]
and for
\[
v=|\nabla u|^2,
\]
we have
\[
v_t=a_{ij}v_{ij}+a_{ij,m}u_{ij}v_m-2a_{ij}u_{ki}u_{kj}.
\] 
For $\rho=\ell/4$, let
\[
 w=(\nabla u\cdot e-\ell+\rho |\nabla u|^2)^+.
\]
Then in the region $\Omega_+=\{(x,t)\in Q_1: w>0\}$, we have
\[
 w_t=a_{ij} w_{ij} + a_{ij,m}u_{ij}w_m-2\rho a_{ij}u_{ki}u_{kj}.
\]
Since $|\nabla u|>\ell/2$ in $\Omega_+$, we have in $\Omega_+$:
\[
 |a_{ij,m}|\le 
 \begin{cases}
c(p,n,\gamma)\ell^{-1}, & \text{ if } \gamma\ge 0\\
c(p,n,\gamma)\ell^{\gamma-1}, & \text{ if } \gamma<0,
 \end{cases}
\]
where $c(p,n,\gamma)$ is a positive constant depending only on $p,n$ and $\gamma$. By Cauchy-Schwarz inequality, it follows that
\[
  w_t\le a_{ij} w_{ij} + c_1(\ell)|\nabla w|^2\quad\mbox{in }\Omega_+,
\]
where
\[
c_1(\ell)= 
 \begin{cases}
 c_0\ell^{-\gamma-3}, & \text{ if} \gamma\ge 0\\
c_0\ell^{2\gamma-3}, & \text{ if } \gamma<0
 \end{cases}
\]
for some constant $c_0>0$ depending only on $p,\gamma,n$. Therefore, it satisfies in the viscosity sense that
\[
  w_t\le \tilde a_{ij} w_{ij} + c_1(\ell)|\nabla w|^2\quad\mbox{in }Q_1,
\]
where
\[
\tilde a_{ij}(x)=
\begin{cases}
a_{ij}(\nabla u(x)),\quad x\in\Omega_+\\
\delta_{ij},\quad \mbox{elsewhere}.
 \end{cases}
\]
Notice that since $\ell\in (\frac 12, 1)$, $\tilde a_{ij}$ is
uniformly elliptic with ellipticity constants depending only on $p$
and $\gamma$. We can choose $c_2(\ell)>0$ depending only on
$p,\gamma,n$ and $\ell$ such that if we let
\[
 W=1-\ell+\rho
\]
and
\[
\overline w =\frac{1}{c_2}(1-e^{c_2(w -W )}),
\]
then we have
\[
   \overline w_t\ge \tilde a_{ij} \overline w_{ij} \quad\mbox{in }Q_1
\]
in the viscosity sense. Since $W\ge \sup_{Q_1}w$, then $\overline w\ge 0$ in $Q_1$. 

If $\nabla u\cdot e\le \ell$, then $\overline w\ge (1-e^{c_2 (\ell-1)})/c_2$. Therefore, it follows from the assumption that
\[
|\{(x,t)\in Q_1: \overline w\ge (1-e^{c_2 (\ell-1)})/c_2\}|> \mu |Q_1|.
\]
By Proposition 2.3 in \cite{JS}, there exist $\tau_1>0$ depending only $\mu$ and $n$, and $\nu>0$ depending only on $\mu, \ell, n,\gamma$ and $p$ such that
\[
\overline w\ge \nu\quad\mbox{in }Q_{\tau_1}.
\]
Meanwhile, we have
\[
\overline w\le W-w.
\]
This implies that
\[
W-w\ge\nu\quad\mbox{in }Q_{\tau_1}.
\]
Therefore, we have
\[
 \nabla u\cdot e+\rho|\nabla u|^2 \le 1+\rho -\nu\quad\mbox{in }Q_{\tau_1}.
\]
Since $| \nabla u\cdot e|\le |\nabla u|$, we have
\[
 \nabla u\cdot e+\rho ( \nabla u\cdot e)^2\le 1+\rho - \nu\quad\mbox{in }Q_{\tau_1}.
\]
Therefore, remarking that $\nu \le 1 + \rho$, 
we have
\[
\nabla u\cdot e \le \frac{-1+\sqrt{1+4\rho (1+\rho - \nu)}}{2\rho}\le
1-\delta\quad\mbox{in }Q_{\tau_1}
\]
for some $\delta>0$ depending only on $p, \gamma, \mu, \ell, n$.
Finally, we can choose $\tau=\tau_1$ if $\gamma<0$ and
$\tau=\tau_1(1-\delta)^{\gamma/2}$ if $\gamma\ge 0$ such that
$Q^{1-\delta}_\tau\subset Q_{\tau_1}$.
\end{proof}

Note that our choice of $\tau$ and $\delta$ in the above implies that
\[
\tau<(1-\delta)^{\frac{\gamma}{2}}\quad\mbox{when }\gamma\ge 0.
\]
In the rest of the paper, we will choose $\tau$ even smaller such that
\begin{equation}\label{eq:relationtaudelta3}
\tau<(1-\delta)^{1+\gamma}\quad\mbox{for all }\gamma> -1. 
\end{equation}
This fact will be used in the proof of Theorem \ref{thm:holdergradient}.

In case we can apply the previous lemma holds in all directions $e \in \partial B_1$, then it effectively implies a reduction in the oscillation of $\nabla u$ in a smaller parabolic cylinder. If such improvement of oscillation takes place at all scales, it leads to the H\"older continuity of
 $\nabla u$ at $(0,0)$ by iteration and scaling. The following corollary describes this favorable case in which the assumption of the previous Lemma holds in all directions.

\begin{corollary}\label{cor:osc}
  Let $u$ be a smooth solution of \eqref{eq:main va} with
  $\va\in (0,1)$ such that $|\nabla u|\le 1$ in $Q_1$. For every
  $0<\ell<1$, $\mu>0$, there exist $\tau\in (0,1/4)$ depending only on
  $\mu$ and $n$, and $\delta>0$ depending only on
  $n,p,\gamma, \mu, \ell$, such that for every nonnegative integer
  $k\le\log\va/\log(1-\delta)$, if
\begin{equation}\label{eq:stopping condition}
  |\{(x,t)\in Q^{(1-\delta)^i}_{\tau^i}: \nabla u\cdot e\le \ell (1-\delta)^i\}|> \mu |Q^{(1-\delta)^i}_{\tau^i}|
  \quad\mbox{for all }e\in\mathbb S^{n-1}\mbox{ and }i=0,\cdots, k,
\end{equation}
then
\[
|\nabla u|< (1-\delta)^{i+1}\quad\mbox{in }Q^{(1-\delta)^{i+1}}_{\tau^{i+1}} \ \mbox{  for all }i=0,\cdots, k.
\]
\end{corollary}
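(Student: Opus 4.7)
The strategy is to induct on $i \in \{0,1,\ldots,k\}$, applying Lemma~\ref{lem:osc2} at each level to an appropriately rescaled copy of $u$. I fix once and for all the constants $\tau$ and $\delta$ produced by Lemma~\ref{lem:osc2} applied with parameters $\mu$ and $\ell' := \max(\ell, 3/4)$; this substitution is harmless because enlarging $\ell$ only makes the measure condition~\eqref{eq:proj on e positive} easier to satisfy, while it is needed to meet the lemma's restriction $\ell' > 1/2$. The resulting constants depend only on $n, p, \gamma, \mu, \ell$ as required.

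The base case $i=0$ is a direct application of Lemma~\ref{lem:osc2}. For each $e \in \mathbb{S}^{n-1}$ the hypothesis~\eqref{eq:proj on e positive} holds by \eqref{eq:stopping condition}, so the lemma gives $\nabla u \cdot e < 1-\delta$ in $Q^{1-\delta}_\tau$ with constants $\tau, \delta$ that do not depend on $e$. Evaluating at $e = \nabla u(x,t)/|\nabla u(x,t)|$ pointwise (and trivially when $\nabla u = 0$) converts this family of one-sided bounds into $|\nabla u| < 1-\delta$ in $Q^{1-\delta}_\tau$.

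For the inductive step, assume $|\nabla u| < (1-\delta)^i$ in $Q^{(1-\delta)^i}_{\tau^i}$ for some $0 \le i < k$, and rescale via
\[
v(x,t) = \frac{1}{\tau^i (1-\delta)^i}\, u\!\left(\tau^i x,\; \tau^{2i}(1-\delta)^{-i\gamma}\, t \right), \qquad (x,t) \in Q_1.
\]
Using the scaling identity recorded at the start of Section~\ref{sec:holder gradient}, $v$ solves \eqref{eq:main va} in $Q_1$ with $\va$ replaced by $\tilde\va = \va/(1-\delta)^i$; the hypothesis $i \le k \le \log\va/\log(1-\delta)$, together with $\log(1-\delta) < 0$, yields $(1-\delta)^i \ge \va$, hence $\tilde\va \le 1$ so Lemma~\ref{lem:osc2} still applies to $v$. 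The inductive hypothesis gives $|\nabla v| \le 1$ in $Q_1$, and since $\nabla u = (1-\delta)^i \nabla v$ at corresponding points, condition~\eqref{eq:stopping condition} at level $i$ translates to $|\{(x,t) \in Q_1 : \nabla v \cdot e \le \ell\}| > \mu |Q_1|$ for every $e \in \mathbb{S}^{n-1}$. Running the base-case argument on $v$ gives $|\nabla v| < 1-\delta$ in $Q^{1-\delta}_\tau$, and undoing the rescaling (the cylinder $Q^{1-\delta}_\tau$ for $v$ corresponds exactly to $Q^{(1-\delta)^{i+1}}_{\tau^{i+1}}$ for $u$) produces $|\nabla u| < (1-\delta)^{i+1}$ on the desired cylinder.

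The main technical points to verify are (i) the nested inclusion $Q^{(1-\delta)^{i+1}}_{\tau^{i+1}} \subset Q^{(1-\delta)^i}_{\tau^i}$, which reduces to $\tau \le (1-\delta)^{\gamma/2}$ and is already built into the choice of $\tau$ in Lemma~\ref{lem:osc2} (trivial when $\gamma < 0$), and (ii) that the constants $\tau, \delta$ supplied by the lemma are independent of $i$ and of the rescaled parameter $\tilde\va$, so that the same pair drives every iteration. Neither presents a serious obstacle, but keeping the cylinder scalings consistent with the gradient scalings is the point that requires the most care, and it is ultimately the reason Lemma~\ref{lem:osc2} was stated with both parameters $\tau$ and the scale factor $1-\delta$ in the first place.
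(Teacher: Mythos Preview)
Your proof is correct and follows the same induction-plus-rescaling scheme as the paper's; your substitution $\ell' = \max(\ell,3/4)$ neatly resolves the mismatch between the corollary's range $0<\ell<1$ and Lemma~\ref{lem:osc2}'s hypothesis $\ell>1/2$, which the paper glosses over. The only slip is the range $0 \le i < k$ in the inductive step, which should read $0 \le i \le k$ so that the conclusion at level $k$ is also reached.
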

\begin{remark}
Remark that we can further impose on $\delta$ that $\delta < 1/2$ and $\delta < 1 -\tau$. 
\end{remark}
\begin{proof}
  When $i=0$, it follows from Lemma \ref{lem:osc2} that
  $\nabla u\cdot e< 1-\delta$ in $Q_{\tau}$ for all
  $e\in\mathbb S^{n-1}$. This implies that $|\nabla u|<1-\delta$ in
  $Q^{1-\delta}_\tau$.

  Suppose this corollary holds for $i=0,\cdots, k-1$. We are going
  prove it for $i=k$. Let
\[
v(x,t)=\frac{1}{\tau^k (1-\delta)^k}u\Big(\tau^k x,
\tau^{2k}(1-\delta)^{-k\gamma} t\Big).
\] Then $v$ satisfies
\[
v_t=\left(|\nabla
  v|^2+\frac{\va^2}{(1-\delta)^{2k}}\right)^{\gamma/2}\left(\Delta
  v+(p-2)\frac{v_iv_j}{|\nabla v|^2+\eps^2 (1-\delta)^{-2k}}
  v_{ij}\right)\quad\mbox{in } Q_1.
\]
By the induction hypothesis, we also know that $|\nabla v|\le 1$ in $Q_1$, and
\[
|\{(x,t)\in Q_{1}: \nabla v\cdot e\le \ell \}|> \mu |Q_{1}| \quad\mbox{for all }e\in\mathbb S^{n-1}.
\]
Notice that $\va\le (1-\delta)^k$. Therefore, by Lemma \ref{lem:osc2} we have
\[
\nabla v\cdot e \le 1-\delta\quad\mbox{in }Q^{1-\delta}_{\tau} \quad\mbox{for all }e\in\mathbb S^{n-1}.
\]
Hence, $|\nabla v|\le 1-\delta$ in $Q^{1-\delta}_\tau$. Consequently,
\[
|\nabla u|< (1-\delta)^{k+1}\quad\mbox{in }Q^{(1-\delta)^{k+1}}_{\tau^{k+1}}. \qedhere
\]
\end{proof}

Unless $\nabla u(0,0)=0$, the above iteration will inevitably stop at some step. There will be a first value of $k$ where the assumptions of Corollary \ref{cor:osc} do not hold  in some direction $e\in\mathbb S^{n-1}$. This means that $\nabla u$ is close to some fixed vector in a large portion of $Q^{(1-\delta)^k}_{\tau^k}$. We then prove that $u$ is close to some linear function, from which the H\"older continuity of $\nabla u$ will follow applying a result from \cite{wang}. 

Having $\nabla u$ close to a vector $e$ for most points tells us that for every fixed time $t$, the function $u(x,t)$ will be approximately linear. However, it does not say anything about how $u$ varies respect to time. We must use the equation in order to prove that the function $u(x,t)$ will be close to some linear function uniformly in $t$. That is the main purpose of the following set of lemmas.


\begin{lemma}\label{l:timeslices2allcylinder}
 Let $u\in C(\overline Q_1)$ be a smooth solution of \eqref{eq:main va} with $\gamma>-1, \va\in (0,1)$ such that $|\nabla u|\le M$ in $Q_1$. Let $A$ be a positive constant.
Assume that for all $t \in [-1,0]$, we have
\[ \osc_{B_1} u(\cdot, t) \leq A,\]
then
\[ 
\osc_{Q_1} u \leq 
\begin{cases}
C A,\quad\mbox{if }\gamma\ge 0\\
C (A+A^{1+\gamma})\quad\mbox{if }-1<\gamma<0,
\end{cases}
\]
where $C$ is a positive constant  depending only on $M, \gamma, p$ and the dimension $n$.\end{lemma}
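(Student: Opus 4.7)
The plan is to use comparison with barriers for the regularized equation \eqref{eq:main va}. The first step is a reduction: set $\mu(t) = \min_{\overline{B_1}} u(\cdot, t)$ and $\nu(t) = \max_{\overline{B_1}} u(\cdot, t)$, both continuous on $[-1,0]$, with $\nu(t) - \mu(t) \leq A$ by hypothesis. Then
\[
\osc_{\overline{Q_1}} u \;=\; \max_{t} \nu(t) - \min_{t} \mu(t) \;\leq\; A + \osc_{[-1,0]} \mu,
\]
so it suffices to bound $\osc \mu$ by $CA$ (for $\gamma \geq 0$) or $C(A + A^{1+\gamma})$ (for $-1 < \gamma < 0$). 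After an additive translation of $u$, I may assume $\mu(\bar t) = 0$ for some $\bar t \in [-1, 0]$, so that $u(\cdot, \bar t) \in [0, A]$ on $\overline{B_1}$.

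To bound $\mu(t_1)$ for $t_1 > \bar t$ (the backward case is symmetric), I will construct a supersolution of \eqref{eq:main va} on $B_1 \times [\bar t, t_1]$ of the form
\[
\Phi(x, t) = C_0 A + L(t - \bar t) + B|x|^2,
\]
with $B$ comparable to $A$, choosing $L$ so that $\Phi_t$ dominates the relevant spatial operator applied to $\Phi$. For $\gamma \geq 0$, the estimates $|\nabla \Phi| \leq 2B$ and $|\nabla u| \leq M$ keep $(|\nabla \Phi|^2 + \va^2)^{\gamma/2}$ bounded by $C(M, p, \gamma)$, so $L$ of order $A$ suffices, and comparison yields $u \leq \Phi$, hence $\mu(t_1) \leq CA$; a symmetric subsolution gives the lower bound. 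For $-1 < \gamma < 0$, the scaling $|\nabla \Phi| \sim A$, $|D^2 \Phi| \sim A$ makes the nonlinear right-hand side of size $A \cdot A^\gamma = A^{1+\gamma}$, which accounts for the extra term in the conclusion.

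The main obstacle will be the lateral boundary condition $\Phi \geq u$ on $\partial B_1 \times [\bar t, t_1]$. All the hypothesis gives there is $u(x,t) \leq \mu(t) + A$, where $\mu(t)$ is precisely the quantity I am trying to bound: a self-referential loop. I will close it by a bootstrap: for a small $\eta > 0$, set
\[
T^* = \sup\bigl\{T \in [\bar t, t_1] : u \leq \Phi + \eta \text{ on } \overline{B_1} \times [\bar t, T]\bigr\},
\]
and use the strict supersolution inequality together with the continuity of $u$ and $\Phi$ to propagate $T^*$ all the way to $t_1$, then send $\eta \to 0$. For $-1 < \gamma < 0$ there is an additional technical wrinkle: the quadratic $\Phi$ has $\nabla \Phi = 0$ at the origin, where $(|\nabla \Phi|^2 + \va^2)^{\gamma/2}$ becomes singular and ruins uniformity in $\va$. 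I expect to handle this either by modifying $\Phi$ to have a nonvanishing gradient (adding a small linear piece, with the initial condition absorbed into $C_0$) or by localizing the barrier argument away from the critical point, carrying through the $A^{1+\gamma}$ scaling identified above.
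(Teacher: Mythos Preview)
Your barrier approach is correct in spirit and is what the paper does, but two places need repair.

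\textbf{The lateral boundary is not a self-referential loop; no bootstrap is needed.} Take the barrier $\overline w(x,t)=\overline a + L t + 2A\,\psi(|x|)$ with $\psi(0)=0$, $\psi(1)=1$, and choose $\overline a$ so that $\overline w(\cdot,-1)\ge u(\cdot,-1)$ with equality somewhere. Suppose $m:=-\inf_{Q_1}(\overline w-u)>0$, attained at $(x_0,t_0)$. The choice of $\overline a$ rules out $t_0=-1$. If $x_0\in\partial B_1$, then since $\overline w+m$ touches $u$ from above at $(x_0,t_0)$,
\[
2A=\overline w(x_0,t_0)-\overline w(0,t_0)\le u(x_0,t_0)-u(0,t_0)\le \osc_{B_1}u(\cdot,t_0)\le A,
\]
a contradiction. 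Hence $x_0\in B_1$, and the strict supersolution inequality finishes the argument. The coefficient $2A$ (strictly larger than $A$) in front of $\psi$ is what makes this work; your $T^*$ machinery is unnecessary.

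\textbf{For $-1<\gamma<0$ the quadratic barrier cannot be salvaged by a linear perturbation.} If you add $q\cdot x$ with $|q|$ large enough to keep $\nabla\Phi\neq 0$ throughout $B_1$, the minimum of $\Phi(\cdot,t)$ over $\overline B_1$ migrates to $\partial B_1$, and the lateral-boundary argument above --- which needs $\Phi(x_0,t_0)-\min_{B_1}\Phi(\cdot,t_0)>A$ for \emph{every} $x_0\in\partial B_1$ --- fails at the point $x_0=-\hat q$. If instead $|q|$ is small, the critical point of $\Phi$ stays in $B_1$ and the singularity persists. The clean fix (the paper's) is to replace $|x|^2$ by $|x|^\beta$ with $\beta=\dfrac{2+\gamma}{1+\gamma}>2$. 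Then $|\nabla\overline w|\sim A|x|^{\beta-1}$ and $|D^2\overline w|\sim A|x|^{\beta-2}$, and $\beta$ is chosen precisely so that
\[
|\nabla\overline w|^\gamma\,|D^2\overline w|\sim A^{1+\gamma}\,|x|^{\gamma(\beta-1)+(\beta-2)}=A^{1+\gamma},
\]
a constant independent of $x$ and of $\va$. Taking $L=\Lambda A^{1+\gamma}$ for suitable $\Lambda(n,p,\gamma)$ makes $\overline w$ a strict supersolution of the nonlinear equation \eqref{eq:main va} on all of $B_1$, with no localisation needed, and yields $\osc_{Q_1}u\le C(A+A^{1+\gamma})$.

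A minor point on $\gamma\ge 0$: your sentence ``$|\nabla \Phi|\le 2B$ and $|\nabla u|\le M$ keep $(|\nabla\Phi|^2+\va^2)^{\gamma/2}$ bounded by $C(M,p,\gamma)$'' mixes two things. With $B\sim A$, the quantity $(|\nabla\Phi|^2+\va^2)^{\gamma/2}$ is of order $A^\gamma$, not $C(M)$, and the nonlinear route would only give $\osc u\le CA^{1+\gamma}$. The correct route --- which your mention of $|\nabla u|\le M$ hints at --- is to view \eqref{eq:main va} as the \emph{linear} equation $u_t=a_{ij}(\nabla u)u_{ij}$ with $|a_{ij}(\nabla u)|\le (M^2+1)^{\gamma/2}\max(p-1,1)$, and check that $\overline w$ is a supersolution of this linear problem. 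Then $L=C(M)A$ suffices and $\osc_{Q_1}u\le CA$.
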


\begin{proof}
When $\gamma\ge 0$, for the $a_{ij}$ in \eqref{eq:coefficients}, we have $|a_{ij}|\le \Lambda:=(M^2+1)^{\gamma/2}\max(p-1,1)$, and therefore, the conclusion follows from the same proof of Lemma 4.3 in \cite{JS}.

When $\gamma\in (-1, 0)$, we choose different comparison functions
from \cite{JS}. Let
\begin{eqnarray*}
\overline w(x,t) &= \overline a+ \Lambda A^{1+\gamma} t + 2 A |x|^\beta, \\
\underline w (x,t) &= \underline a - \Lambda A^{1+\gamma} t - 2 A |x|^\beta
\end{eqnarray*}
where $\beta=\frac{2+\gamma}{1+\gamma}$ and $\Lambda$ to be fixed
later. As far as $\overline a$ and $\underline a$ are concerned,
$\overline a$ is chosen so that
$\overline w(\cdot,-1) \geq u(\cdot,-1)$ in $B_1$ and
$\overline w(\bar x, -1) = u(\bar x,-1)$ for some
$\bar x\in\overline B_1$, and $\underline a$ is chosen so that
$\underline w(\cdot,-1) \leq u(\cdot,-1)$ in $B_1$ and
$\underline w(\underline x,-1) = u(\underline x,-1)$ for some
$\underline x\in \overline B_1$.  This implies that
\[
\overline a-\underline a=u(\bar x,-1) -u(\underline x,-1)+2\Lambda A^{1+\gamma}-2A|\bar x|^2-2A|\underline x|^2\le A+2\Lambda A^{1+\gamma}.
\]

Notice that $\beta>2$ since $\gamma\in (-1,0)$.  We now remark that if $\Lambda$ is chosen as follows: $\Lambda=(2\beta)^{\gamma+1}(\beta-1)pn^2+1$ 
then the following first inequality
\[
\Lambda A^{1+\gamma}\le
\Big((2A\beta|x|^{\beta-1})^2+\va^2\Big)^{\gamma/2} \cdot p n^2
\cdot 2A\beta(\beta-1)|x|^{\beta-2}\le
(2\beta)^{\gamma+1}(\beta-1)pn^2 A^{1+\gamma},
\]
(we used that $\gamma<0$) cannot hold true for $x \in B_1$. This
implies that $\overline w$ is a strict super-solution of the equation
satisfied by $u$. Similarly, $\underline w$ is a strict sub-solution.

We claim that 
\[
\overline w \geq u \quad\text{in } Q_1
\quad \text{ and } \underline w \leq u \quad\text{in } Q_1.
\]
We only justify the first inequality since we can proceed similarly to
get the second one.  If not, let $m=-\inf_{Q_1} (\overline w-u)>0$ and
$(x_0,t_0)\in\overline Q_1$ be such that
$m=u(x_0,t_0)-\overline w(x_0,t_0)$. Then $\overline w+m\ge u$ in $Q_1$ and
$\overline w(x_0,t_0)+m=u(x_0,t_0)$.  By the choice of $\bar a$, we
know that $t_0>-1$.  If $x_0\in\partial B_1$, then
\[
2A= (\overline w(x_0,t_0)+m)-(\overline w(0,t_0)+m)\le u(x_0,t_0)-u(0,t_0)\le \osc_{B_1} u(\cdot, t_0) \leq A,
\]
which is impossible. Therefore, $ x_0\in B_1$.  But this is not possible since $\overline w$ is a strict super-solution of the equation
satisfied by $u$. This proves the claim.

Therefore, we have
\[
\osc_{Q_1} u\le \sup_{Q_1}\overline w-\inf_{Q_1}\underline w\le \bar a-\underline a+4A=2\Lambda A^{\gamma+1}+5A. \qedhere
\]
\end{proof}


\begin{lemma} \label{l:timeslices2allcylinder-aroundplane}
Let $u\in C(\overline Q_1)$ be a smooth solution of \eqref{eq:main va} with $\gamma\in\R, \va\in (0,1)$. Let $e\in\mathbb S^{n-1}$  and $0< \delta <1/8$.  Assume that for all $t \in [-1,0]$, we have
\[ \osc_{x\in B_1} (u(x, t) - x \cdot e) \leq \delta,\]
then
\[ 
\osc_{(x,t) \in Q_1} \left( u(x,t)  - x \cdot e \right)  \leq C \delta,
\]
where $C$ is a positive constant  depending only on $ \gamma, p$ and the dimension $n$.\end{lemma}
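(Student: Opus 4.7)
Set $v(x,t) := u(x,t) - x\cdot e$, so the hypothesis reads $\osc_{B_1} v(\cdot,t) \le \delta$ for every $t \in [-1,0]$ and the goal is $\osc_{Q_1} v \le C\delta$. The plan is to compare $u$ against two explicit smooth barriers obtained by perturbing the plane $x\cdot e$. The key observation is that, since $|e|=1$ and $\delta<1/8$, the gradients of these barriers will lie in the annulus $\{1/2 \le |q| \le 3/2\}$, so the operator $F(q,M) := (|q|^2+\va^2)^{\gamma/2}(\delta_{ij}+(p-2)q_iq_j/(|q|^2+\va^2))M_{ij}$ from \eqref{eq:main va} acts as a uniformly parabolic operator along them, with constants independent of both $\va \in (0,1)$ and the sign of $\gamma$.

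Concretely, I will take
\[
\overline w(x,t) := x\cdot e + \overline a + L\delta(t+1) + 2\delta|x|^2,\qquad
\underline w(x,t) := x\cdot e + \underline a - L\delta(t+1) - 2\delta|x|^2,
\]
with $\overline a := \max_{x \in \overline B_1}(v(x,-1) - 2\delta|x|^2)$ and $\underline a := \min_{x \in \overline B_1}(v(x,-1) + 2\delta|x|^2)$, so that by construction $\underline w(\cdot,-1) \le u(\cdot,-1) \le \overline w(\cdot,-1)$ on $\overline B_1$. Using $D^2\overline w = 4\delta I$ and the $\va$-uniform bound on $(|\nabla\overline w|^2+\va^2)^{\gamma/2}$, a direct trace computation will yield $F(\nabla\overline w,D^2\overline w) \le C_0\delta$ for some $C_0 = C_0(n,p,\gamma)$; fixing $L := C_0+1$ then makes $\overline w$ a strict classical supersolution of \eqref{eq:main va} and, symmetrically, $\underline w$ a strict subsolution.

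I will then deduce $u \le \overline w$ on $\overline Q_1$ by contradiction. If the positive maximum $m > 0$ of $u-\overline w$ is attained at some $(x_0,t_0)$, then the initial condition forces $t_0 > -1$; the case $x_0 \in B_1$ is ruled out by the standard interior argument (positive definiteness of $A(q)$ combined with the strict supersolution inequality); and the case $x_0 \in \partial B_1$ is ruled out by the oscillation hypothesis, since $u-\overline w = m$ at $(x_0,t_0)$ and $u-\overline w \le m$ at $(0,t_0)$ jointly force
\[
v(x_0,t_0) - v(0,t_0) \ge \overline w(x_0,t_0) - \overline w(0,t_0) - x_0\cdot e = 2\delta,
\]
in contradiction with $\osc_{B_1} v(\cdot,t_0) \le \delta$. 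The symmetric argument gives $\underline w \le u$, whence
\[
\osc_{Q_1} v \le \osc_{B_1} v(\cdot,-1) + 2(L+2)\delta \le (2L+5)\delta.
\]

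The main obstacle is the $\va$-uniform estimate $F(\nabla\overline w,D^2\overline w) \le C_0\delta$. This is precisely where the hypothesis $\delta < 1/8$ gets used, to keep $|\nabla\overline w|$ uniformly bounded away from $0$ and $\infty$ so that the prefactor $(|\nabla\overline w|^2+\va^2)^{\gamma/2}$ is harmless regardless of the sign of $\gamma$. By contrast, in Lemma \ref{l:timeslices2allcylinder} the barrier $2A|x|^\beta$ has vanishing gradient at the origin, which is why the delicate exponent $\beta = (2+\gamma)/(1+\gamma)$ had to be chosen for $\gamma \in (-1,0)$; working here around the plane $x\cdot e$ of unit gradient lets us take $\beta = 2$ uniformly for all $\gamma \in \R$.
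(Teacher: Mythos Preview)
Your proposal is correct and follows essentially the same approach as the paper: the same quadratic-in-$x$, linear-in-$t$ barriers around the plane $x\cdot e$, the same use of $\delta<1/8$ to trap $|\nabla\overline w|$ in $[1/2,3/2]$ and make the operator uniformly elliptic along the barriers, and the same three-case contradiction argument (initial time, lateral boundary via the oscillation hypothesis, interior via the strict supersolution property). The only differences are cosmetic (you write $L\delta(t+1)$ where the paper writes $\Lambda\delta t$, absorbing the shift into the constant), and your final paragraph explaining why $\beta=2$ suffices here for all $\gamma$ is a nice piece of commentary that the paper does not make explicit.
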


\begin{proof}
Let
\begin{eqnarray*}
\overline w(x,t) &= \overline a+ x \cdot e + \Lambda \delta t + 2 \delta |x|^2, \\
\underline w (x,t) &= \underline a + x \cdot e - \Lambda \delta t - 2 \delta |x|^2,
\end{eqnarray*}
where $\Lambda>0$ will be fixed later, $\overline a$ is chosen so that
$\overline w(\cdot,-1) \geq u(\cdot,-1)$ in $B_1$ and
$\overline w(\bar x, -1) = u(\bar x,-1)$ for some
$\bar x\in\overline B_1$, and $\underline a$ is chosen so that
$\underline w(\cdot,-1) \leq u(\cdot,-1)$ in $B_1$ and
$\underline w(\underline x,-1) = u(\underline x,-1)$ for some
$\underline x\in \overline B_1$. This implies that
\[
\overline a-\underline a=u(\bar x,-1)-\bar x\cdot e -(u(\underline x,-1)-\underline x\cdot e)+2\Lambda\delta-2\delta|\bar x|^2-2\delta|\underline x|^2\le (2\Lambda+1)\delta.
\]

For every $x \in \overline B_1$, and $t \in [-1,0]$, since $\delta < 1/8$, we have
\[ |\nabla \overline w(x,t)| \geq |e| - 4\delta |x| \geq 1/2, \qquad |\nabla \underline w(x,t)| \geq |e| - 4\delta |x| \geq 1/2.\]
Similarly, $|\nabla \overline w(x,t)| \leq 3/2$ and $|\nabla \underline w(x,t)| \leq 3/2$. Therefore, using the notation \eqref{eq:coefficients}, there is a constant $A_0$ (depending on $p$ and $\gamma$) so that 
\[ a_{ij}(\nabla \overline w(x,t)) \leq A_0 \mathrm I \qquad  \text{and} \qquad a_{ij}(\nabla \underline w(x,t)) \leq A_0 \mathrm I.\]

We choose $\Lambda = 5nA_0$. We claim that 
\[
\overline w \geq u \quad\text{in } Q_1
\quad \text{ and } \underline w \leq u \quad\text{in } Q_1.
\]
We only justify the first inequality since we can proceed similarly to
get the second one.  If not, let $m=-\inf_{Q_1} (\overline w-u)>0$ and
$(x_0,t_0)\in\overline Q_1$ be such that
$m=u(x_0,t_0)-\overline w(x_0,t_0)$. Then $\overline w+m\ge u$ in $Q_1$ and
$\overline w(x_0,t_0)+m=u(x_0,t_0)$.  By the choice of $\bar a$, we
know that $t_0>-1$.  If $x_0\in\partial B_1$, then
\[
\begin{split}
2\delta&= (\overline w(x_0,t_0)+m)- x_0\cdot e-(\overline w(0,t_0)+m)\\
&\le u(x_0,t_0)- x_0\cdot e-u(0,t_0)\leq \osc_{x\in B_1} (u(x, t_0) - x \cdot e)\le \delta,
\end{split}
\]
which is impossible. Hence, $ x_0\in B_1$. Therefore, we have the classical relations:
\begin{align*}
u(x_0,t_0) &= \overline w(x_0,t_0)+m, \\
\nabla u(x_0,t_0) &= \nabla \overline w(x_0,t_0) \in \overline B_{3/2} \setminus B_{1/2}, \\
D^2 u(x_0,t_0) &\leq D^2  \overline w(x_0,t_0) = 4\delta \mathrm I, \\
\partial_t u(x_0,t_0) &\geq \partial_t \overline w(x_0,t_0) = \Lambda \delta.
\end{align*}
It follows that
\[ u_t(x_0,t_0) - a_{ij}(\nabla u(x_0,t_0)) \partial_{ij} u(x_0,t_0) \geq \overline w_t(x_0,t_0) - a_{ij}(\nabla \overline w(x_0,t_0)) \partial_{ij} \overline w(x_0,t_0) > 0,\]
which is a contradiction. This proves the claim.

Therefore, we have
\[
\osc_{(x,t) \in Q_1} \left( u(x,t)  - x \cdot e \right) \le \sup_{Q_1}(\overline w-x\cdot e)-\inf_{Q_1}(\underline w-x\cdot e)\le \bar a-\underline a+4\delta=(2\Lambda +5)A. \qedhere
\]
\end{proof}

\begin{lemma} \label{l:gradientarounde2smalloscillation} Let $\eta$ be
  a positive constant and $u$ be a smooth solution of \eqref{eq:main
    va} with $\gamma>-1, \va\in (0,1)$ such that $|\nabla u|\le 1$ in
  $Q_1$. Assume
\[ \left\vert \left\{ (x,t) \in Q_1 : |\nabla u - e| > \eps_0 \right\}
\right\vert \leq \eps_1\]
for some $e\in\mathbb S^{n-1}$ and two positive constants
$\eps_0,\eps_1.$ Then, if $\eps_0$ and $\eps_1$ are sufficiently
small, there exists a constant $a \in \R$, such that
\[ |u(x,t) - a - e \cdot x| \leq \eta \quad \text{ for all } (x,t) \in Q_{1/2}.\]
Here, both $\va_0$ and $\va_2$ depend only on $n,p, \gamma$ and $\eta$.
\end{lemma}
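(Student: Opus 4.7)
The plan is to set $g(x,t) = u(x,t) - e\cdot x$, so that $|\nabla g| = |\nabla u - e| \le 2$ on $Q_1$, and to show that $g$ is uniformly close to a constant on $Q_{1/2}$. I would proceed in four steps: (i) selection of ``good'' time slices via Fubini; (ii) a Poincar\'e--Riesz potential estimate for the spatial oscillation at good times; (iii) extension to all times using the time H\"older estimate of \lem{lem:holder in t}; (iv) a final application of \lem{l:timeslices2allcylinder-aroundplane}.

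For step (i), Fubini applied to the hypothesis gives $\int_{-1}^{0} |\{x\in B_1 : |\nabla u(x,t)-e|>\eps_0\}|\,dt \le \eps_1$, so by Chebyshev the set
\[
G := \bigl\{ t\in[-1,0] : |\{x\in B_1 : |\nabla g(x,t)|>\eps_0\}| \le \sqrt{\eps_1}\bigr\}
\]
has complement of measure at most $\sqrt{\eps_1}$ in $[-1,0]$. For step (ii), fix $t\in G$, call $E_t$ the bad spatial set at time $t$, and use the standard Poincar\'e representation
\[
|g(x,t) - \bar g(t)| \le C\int_{B_1}\frac{|\nabla g(y,t)|}{|x-y|^{n-1}}\,dy,
\]
where $\bar g(t) = |B_1|^{-1}\int_{B_1} g(y,t)\,dy$. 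Splitting the integral between $B_1\setminus E_t$ and $E_t$, combining $|\nabla g|\le\eps_0$ on the former with $|\nabla g|\le 2$ on the latter, and using the Riesz potential bound $\int_E |x-y|^{1-n}\,dy \le C|E|^{1/n}$, I obtain
\[
\osc_{B_1} g(\cdot,t) \le C\bigl(\eps_0 + \eps_1^{1/(2n)}\bigr) =: \delta_0 \qquad\text{for every } t\in G.
\]

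For step (iii), since $|[-1,0]\setminus G|\le \sqrt{\eps_1}$, every $t\in[-1,0]$ lies within distance at most $2\sqrt{\eps_1}$ of some $t^*\in G$, and \lem{lem:holder in t} gives $\sup_{x\in B_1}|u(x,t)-u(x,t^*)|\le C\eps_1^{1/4}$, whence
\[
\osc_{B_1} g(\cdot,t) \le \delta_0 + 2C\eps_1^{1/4} =: \delta \qquad\text{for all } t\in[-1,0].
\]
For step (iv), choose $\eps_0,\eps_1$ small enough that $\delta<1/8$; then \lem{l:timeslices2allcylinder-aroundplane} applies directly and yields $\osc_{Q_1}(u-e\cdot x) \le C\delta$. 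Taking $a$ to be any value of $u(x_*,t_*)-e\cdot x_*$ with $(x_*,t_*)\in Q_{1/2}$, and further shrinking $\eps_0,\eps_1$ so that $C\delta \le \eta$, gives $|u(x,t)-a-e\cdot x|\le \eta$ on $Q_{1/2}$.

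The main obstacle is step (ii): turning a measure-theoretic smallness of $\{|\nabla u - e|>\eps_0\}$ into a pointwise $C^0$ closeness of $u(\cdot,t)-e\cdot x$ to a constant at each fixed time. The Poincar\'e representation together with the Riesz potential bound is tailor-made for this, but one must track the dimensional exponent $1/n$ and the Lipschitz bound on $g$. A secondary technical point is that \lem{l:timeslices2allcylinder-aroundplane} requires $\delta<1/8$, which is arranged by choosing $\eps_0$ and $\eps_1$ sufficiently small at the outset depending on $n$, $p$, $\gamma$, and $\eta$.
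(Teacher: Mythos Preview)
Your four-step outline mirrors the paper's proof almost exactly: Fubini/Chebyshev to isolate good time slices, a spatial oscillation bound at those times, extension to all times via the H\"older-in-$t$ estimate, and a final appeal to \lem{l:timeslices2allcylinder-aroundplane}. The only cosmetic difference is that in step~(ii) the paper uses Morrey's inequality on $L^{2n}$ (giving the exponent $\eps_1^{1/(4n)}$) where you use the Poincar\'e representation plus the Riesz potential rearrangement bound (giving $\eps_1^{1/(2n)}$); both are perfectly adequate.

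There is, however, one genuine missing step in your argument. In step~(iii) you invoke \lem{lem:holder in t} to get $|u(x,t)-u(x,t^*)|\le C\eps_1^{1/4}$, but the constant $C$ in that lemma depends on $\|u\|_{L^\infty}$, and the hypotheses of the present lemma give you only $|\nabla u|\le 1$ in $Q_1$---no $L^\infty$ control on $u$ itself. The gradient bound controls spatial oscillation at each fixed time ($\osc_{B_1}u(\cdot,t)\le 2$), but a~priori $u(0,t)$ could drift arbitrarily in $t$. The paper closes this gap by first applying \lem{l:timeslices2allcylinder} to deduce $\osc_{Q_1}u\le C$ from the slice-wise bound $\osc_{B_1}u(\cdot,t)\le 2$, and only then applying \lem{lem:holder in t} to $u-u(0,0)$, which now has a uniform $L^\infty$ bound depending only on $n,p,\gamma$. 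You should insert this application of \lem{l:timeslices2allcylinder} before step~(iii); after that the rest of your argument goes through.
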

\begin{proof}
Let $f(t):=| \{ x \in B_1 : |\nabla u(x,t) - e| > \eps_0 \} |$. By the assumptions and Fubini's theorem, we have that $\int_{-1}^0 f(t) d t\leq \eps_1$. It follows that for $E:=\{t\in (-1,0): f(t)\ge\sqrt{\eps_1}\}$, we obtain
\[
|E|\le \frac{1}{\sqrt{\eps_1}}\int _{E} f(t) d t\le \frac{1}{\sqrt{\eps_1}}\int _{-1}^0 f(t) d t\le \sqrt{\eps_1}.
\]
Therefore, for all $t \in (-1,0] \setminus E$, with $|E| \leq  \sqrt{\eps_1}$, we have
\begin{equation}\label{eq:aux-1}
 | \{ x \in B_1 : |\nabla u(x,t) - e| > \eps_0 \} | \leq \sqrt{\eps_1}.
 \end{equation}
It follows from \eqref{eq:aux-1} and Morrey's inequality that for all $t\in  (-1,0] \setminus E$, we have
\begin{equation}\label{eq:aux-2}
\osc_{B_{1/2}} (u(\cdot,t)-e\cdot x)\le C(n)\|\nabla u-e\|_{L^{2n}(B_1)}\le C(n)(\eps_0+\eps_1^{\frac{1}{4n}}),
\end{equation}
where $C(n)>0$ depends only on $n$. 

Meanwhile, since $|\nabla u| \leq 1$ in $Q_1$, we have that $\osc_{B_1} u(\cdot,t) \leq 2$ for all $t \in (-1,0]$. Thus, applying Lemma \ref{l:timeslices2allcylinder}, we have that $\osc_{Q_1} u \leq C$ for some constant $C$. Note that $u(t,x)-u(0,0)$ also satisfies \eqref{eq:main va} and $\|u(t,x)-u(0,0)\|_{L^\infty(Q_1)}\le \osc_{Q_1} u \leq C$. By applying Lemma \ref{lem:holder in t} to $u(t,x)-u(0,0)$, we have
\[\sup_{t \neq s, (t,x), (s,x)\in Q_1}
\frac{|u(t,x)-u(s,x)|}{|t-s|^{1/2}} \le
C.\]
Therefore, by \eqref{eq:aux-2} and the fact that $|E|\le\sqrt{\eps_1}$, we obtain 
\[
\osc_{B_{1/2}} (u(\cdot,t)-e\cdot x)\le C(\eps_0+\eps_1^{\frac{1}{4n}}+\eps_1^{\frac{1}{4}})
\]
for all $t \in (-1/4,0]$ (that is, including $t \in E$). If $\eps_0$ and $\eps_1$ are sufficiently small, we obtain from Lemma \ref{l:timeslices2allcylinder-aroundplane} that
\[
\osc_{Q_{1/2}}  (u-e\cdot x)\le C(\eps_0+\eps_1^{\frac{1}{4n}}+\eps_1^{\frac{1}{4}}).
\]
Hence, if $\eps_0$ and $\eps_1$ are sufficiently small, there exists a constant $a \in \R$, such that
\[ |u(t,x) - a - e \cdot x| \leq \eta \quad\text{ for all } (x,t) \in Q_{1/2}.\qedhere\]
\end{proof}
\begin{theorem}[Regularity of small perturbation
  solutions] \label{l:smallness to regularity} Let $u$ be a smooth
  solution of \eqref{eq:main va} in $Q_1$. For each $\beta\in (0,1)$,
  there exist two positive constants $\eta$ (small) and $C$ (large),
  both of which depends only on $\beta, n, \gamma$ and $p$, such that
  if $|u(x,t)-L(x)|\le\eta$ in $Q_1$ for some linear function $L$ of
  $x$ satisfying $1/2\le|\nabla L|\le 2$, then
\[
\|u-L\|_{C^{2,\beta}(Q_{1/2})}\le C.
\]
\end{theorem}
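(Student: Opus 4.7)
The plan is to treat this as a small-perturbation regularity statement in the spirit of Savin. The crucial observation is that every affine function $L$ is an exact classical solution of \eqref{eq:main va} (one has $\partial_t L\equiv 0$ and $D^2 L\equiv 0$, so the right-hand side vanishes identically), and the hypothesis $1/2\le|\nabla L|\le 2$ keeps $\nabla L$ strictly away from the singular/degenerate locus $\{\nabla u=0\}$. Consequently, in a fixed neighborhood of $\nabla L$, the equation \eqref{eq:main va} is smooth and uniformly parabolic with constants depending only on $n,p,\gamma$ and, crucially, independent of $\eps\in(0,1)$.

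Concretely, I would first rewrite the equation as $u_t = B_{ij}(\nabla u)\,u_{ij}$, where $B_{ij}(q)=(|q|^2+\eps^2)^{\gamma/2}\big(\delta_{ij}+(p-2)q_iq_j/(|q|^2+\eps^2)\big)$. On the annular region $\{1/4\le|q|\le 4\}$, the coefficients $B_{ij}$ are smooth in $q$ with every $C^k$-seminorm and with ellipticity constants bounded uniformly in $\eps\in(0,1)$ in terms of $n,p,\gamma$. Since the affine function $L$ is also a solution of this reformulation, $u$ and $L$ are two solutions of the same smooth quasilinear uniformly parabolic equation around $q=\nabla L$, and the distance $\|u-L\|_{L^\infty(Q_1)}\le\eta$ can be made arbitrarily small.

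This is precisely the setting of the parabolic small-perturbation theorem, which is the ingredient taken from \cite{wang}; invoking it directly yields $\|u-L\|_{C^{2,\beta}(Q_{1/2})}\le C$ for $\eta$ sufficiently small, with $\eta,C$ depending only on $\beta,n,p,\gamma$. If \cite{wang} supplies only $C^{2,\alpha}$ for some particular $\alpha$, a short bootstrap via classical Schauder estimates for the smooth quasilinear equation $u_t=B_{ij}(\nabla u)u_{ij}$ upgrades the regularity to $C^{2,\beta}$ for any $\beta\in(0,1)$.

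The main obstacle is the one that the small-perturbation theorem is designed to overcome: a priori, $L^\infty$-smallness of $u-L$ does not rule out that $\nabla u$ escapes the smooth uniformly parabolic regime $\{|q|\in[1/4,4]\}$, in which case $B_{ij}(\nabla u)$ could blow up or degenerate. The theorem handles this by an iterated approximation-by-linearization argument at dyadic parabolic scales: at each scale one compares $u-L$ to a solution of the constant-coefficient linear equation $w_t=B_{ij}(\nabla L)w_{ij}$, using $C^{2,\alpha}$ interior estimates for that linear equation to improve the approximation and forcing $\nabla u$ to stay close to $\nabla L$. Once this closeness is established, the coefficients $B_{ij}(\nabla u)$ are Hölder continuous with uniform ellipticity, and the desired $C^{2,\beta}$ bound on $u-L$ follows from Schauder regularity theory.
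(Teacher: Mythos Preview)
Your proposal is correct and essentially identical to the paper's proof: the paper simply notes that $L$ is a solution of \eqref{eq:main va} and invokes Corollary~1.2 of \cite{wang}. Your write-up supplies the surrounding details (uniform ellipticity and smoothness of the coefficients on $\{1/4\le|q|\le4\}$, independence from $\eps$) that justify why \cite{wang} applies, but the argument is the same.
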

\begin{proof}
  Since $L$ is a solution of \eqref{eq:main va}, the conclusion
  follows from Corollary 1.2 in \cite{wang}.
\end{proof}

Now we are ready to prove the following H\"older gradient estimate.
\begin{theorem}\label{thm:holdergradient}
  Let $u$ be a smooth solution of \eqref{eq:main va} with
  $\va\in (0,1), \gamma>-1$ such that $|\nabla u|\le 1$ in $Q_1$.
  Then there exist two positive constants $\alpha$ and $C$ depending
  only on $n, \gamma$ and $p$ such that
\[
|\nabla u(x,t)-\nabla u(y,s)|\le C(|x-y|^\alpha+|t-s|^{\frac{\alpha}{2-\alpha\gamma}})
\]
for all $(x,t), (y,s)\in Q_{1/2}$. Also, there holds
\[
|u(x,t)-u(x,s)|\le C |t-s|^{\frac{1+\alpha}{2-\alpha\gamma}}
\]
for all $(x,t), (x,s)\in Q_{1/2}$. 
\end{theorem}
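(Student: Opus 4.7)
The plan is to establish the spatial gradient Hölder estimate via an iterative dichotomy rooted in Corollary~\ref{cor:osc}, and then derive the time Hölder estimate for $u$ by a barrier argument that exploits the spatial $C^{1,\alpha}$ regularity. Fix $\ell < 1$ close to $1$ and $\mu$ small (both selected below), let $\tau, \delta$ be the parameters produced by Corollary~\ref{cor:osc} (satisfying~\eqref{eq:relationtaudelta3}), and let $k_0$ be the first nonnegative integer for which hypothesis~\eqref{eq:stopping condition} fails for some $e \in \mathbb{S}^{n-1}$. If no such $k_0$ exists in the admissible range, Corollary~\ref{cor:osc} yields $|\nabla u| \le (1-\delta)^{k}$ on $Q^{(1-\delta)^{k}}_{\tau^{k}}$ at every iterated scale. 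Smoothness of $u$ then forces $\nabla u(0,0) = 0$, and setting $\alpha := \log(1-\delta)/\log\tau \in (0,1)$ (so $(1-\delta) = \tau^\alpha$, with $\alpha < 1$ by the remark following Corollary~\ref{cor:osc}), the geometric decay gives
\[
|\nabla u(x,t) - \nabla u(0,0)| \le C\bigl(|x|^\alpha + |t|^{\alpha/(2-\alpha\gamma)}\bigr),
\]
the time exponent arising from the parabolic width $\tau^{2k}(1-\delta)^{-k\gamma} = \tau^{k(2-\alpha\gamma)}$ of the iterated cylinders.

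If instead $k_0 < \infty$, set
\[
v(x,t) = \frac{1}{\tau^{k_0}(1-\delta)^{k_0}}\, u\bigl(\tau^{k_0}x,\; \tau^{2k_0}(1-\delta)^{-k_0\gamma} t\bigr),
\]
so that $v$ solves~\eqref{eq:main va} with $\va$ replaced by $\va(1-\delta)^{-k_0}$ and, by the $(k_0-1)$-st step of Corollary~\ref{cor:osc}, $|\nabla v| \le 1$ on $Q_1$. The failure of~\eqref{eq:stopping condition} at $i = k_0$ produces a direction $e \in \mathbb{S}^{n-1}$ with $\nabla v \cdot e > \ell$ on a subset of $Q_1$ of measure at least $(1-\mu)|Q_1|$; combined with $|\nabla v| \le 1$, this forces $|\nabla v - e|^2 \le 2(1-\ell)$ on that subset. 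Taking $\ell$ close to $1$ and $\mu$ small makes Lemma~\ref{l:gradientarounde2smalloscillation} applicable, producing $a \in \R$ with $|v - a - e\cdot x| \le \eta$ on $Q_{1/2}$, and Theorem~\ref{l:smallness to regularity} then yields $\|v - (a + e\cdot x)\|_{C^{2,\beta}(Q_{1/4})} \le C$ for any fixed $\beta \in (0,1)$. Unscaling furnishes a regularity bound on $\nabla u$ at scales $\le \tau^{k_0}$ that is strictly stronger than the desired Hölder bound, while the first $k_0$ iterations control scales $\ge \tau^{k_0}$. Combining the two regimes and using the translation invariance of~\eqref{eq:main va} delivers the uniform $C^\alpha$ estimate on $\nabla u$ over $Q_{1/2}$.

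For the Hölder continuity in time, fix $(x_0,t_0) \in Q_{1/2}$ and set $L(x) := u(x_0,t_0) + \nabla u(x_0,t_0)\cdot(x-x_0)$; the spatial $C^{1,\alpha}$ bound just established gives $|u(x,t_0) - L(x)| \le C|x-x_0|^{1+\alpha}$ on a neighborhood of $x_0$. I would adapt the proof of Lemma~\ref{lem:holder in t}, replacing the quadratic spatial barrier there by the sharper comparison function
\[
\varphi(x,t) = L(x) + \eta + L_1(t-t_0) + L_2|x-x_0|^{1+\alpha},
\]
selecting $L_2$ so $\varphi(\cdot, t_0) \ge u(\cdot,t_0)$ on the ball where $u-L$ is controlled, and $L_1$ large enough that $\varphi$ is a supersolution of the equation (splitting into $\gamma \ge 0$ and $\gamma < 0$ as in Lemma~\ref{lem:holder in t} to control the $|\nabla\varphi|^\gamma|D^2\varphi|$ term). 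Comparison yields $u(x_0,t) - u(x_0,t_0) \le \eta + L_1(\eta)(t-t_0)$ for every small $\eta > 0$, and optimizing in $\eta$ via the parabolic relation $t-t_0 \sim \rho^{2-\alpha\gamma}$---where $\rho$ is the spatial radius of the barrier and the exponent reflects the Hölder scaling $|\nabla\varphi-\nabla L|\sim \rho^\alpha$---produces precisely the exponent $(1+\alpha)/(2-\alpha\gamma)$. A symmetric subsolution argument gives the matching lower bound.

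The principal obstacle is the delicate parameter balancing in the time estimate: the barrier exponent $1+\alpha$ must match the Hölder regularity of $\nabla u$ rather than its Lipschitz bound, and the $|\nabla u|^\gamma$ prefactor---which can degenerate or blow up near critical points---must be handled via the same $\gamma$-sign dichotomy seen in Lemma~\ref{lem:holder in t}, the sharper $1+\alpha$ exponent of the barrier being precisely what upgrades the $1/2$ time exponent of Lemma~\ref{lem:holder in t} to the announced $(1+\alpha)/(2-\alpha\gamma) > 1/2$.
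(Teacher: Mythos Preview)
Your approach to the spatial gradient estimate is essentially that of the paper, but you have overlooked one case. The iteration in Corollary~\ref{cor:osc} is only valid for $k\le[\log\va/\log(1-\delta)]$, since the rescaled regularization parameter $\va(1-\delta)^{-k}$ must remain below $1$. If hypothesis~\eqref{eq:stopping condition} holds for every $k$ in this finite range, you cannot conclude that $\nabla u(0,0)=0$; you only get $|\nabla u(0,0)|\le(1-\delta)^{k_{\max}}\approx\va$. The paper treats this as a separate case (Case~1): at the terminal scale the rescaled equation~\eqref{eq:quasilinear} has $\va(1-\delta)^{-k}\in(1-\delta,1]$, hence is \emph{uniformly} parabolic, and classical quasilinear theory plus Schauder gives $|\nabla v-b|\le C(|x|+|t|^{1/2})$ and $|\partial_t v|\le C$, completing the argument. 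Without this case the estimate is not uniform in $\va$.

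The more serious gap is in your time H\"older argument. The comparison function $\varphi(x,t)=L(x)+\eta+L_1(t-t_0)+L_2|x-x_0|^{1+\alpha}$ with $\alpha<1$ has $|D^2\varphi|\sim L_2|x-x_0|^{\alpha-1}\to\infty$ as $x\to x_0$. If $\nabla L\neq0$, then near $x_0$ the right-hand side of the equation behaves like $|\nabla L|^\gamma\cdot L_2|x-x_0|^{\alpha-1}$, which blows up, so no finite $L_1$ makes $\varphi$ a supersolution. Even in the viscosity sense the test fails: at $x=x_0$ one can touch $\varphi$ from below by $L(x)+\eta+L_1(t-t_0)+\tfrac12(x-x_0)^TM(x-x_0)$ for \emph{any} symmetric $M$ (since $|x|^2=o(|x|^{1+\alpha})$), forcing $L_1\ge|\nabla L|^\gamma\mathrm{tr}(A(\nabla L)M)$ for all $M$, which is impossible. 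This is precisely why Lemma~\ref{lem:holder in t} uses the exponent $\beta=\max(2,(2+\gamma)/(1+\gamma))\ge2$. The paper avoids this barrier altogether: it reads the time regularity directly off the iteration, combining the scale-by-scale oscillation bound $\osc_{Q^{(1-\delta)^\ell}_{\tau^\ell}}u\le C\tau^\ell(1-\delta)^\ell$ (from Lemma~\ref{l:timeslices2allcylinder}, see~\eqref{eq:for time1}) with the bound $|\partial_t v|\le C$ at the terminal scale coming from the $C^{2,\beta}$ estimate. The exponent $(1+\alpha)/(2-\alpha\gamma)$ then drops out of the relation $\tau(1-\delta)=(\tau^2(1-\delta)^{-\gamma})^\beta$.
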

\begin{proof}
We first show the H\"older estimate of $\nabla u$ at $(0,0)$ and the H\"older estimate in $t$ at (0,0). 

Let $\eta$ be the one in Theorem \ref{l:smallness to regularity} with $\beta=1/2$, and for this $\eta$, let $\va_0,\va_1$ be two sufficiently small positive constants so that the conclusion of Lemma \ref{l:gradientarounde2smalloscillation} holds. For $\ell=1-\eps_0^2/2$ and $\mu=\va_1/|Q_1|$, if 
\[
|\{(x,t)\in Q_1: \nabla u\cdot e \le \ell\}|\le \mu |Q_1|\quad\mbox{for any }e\in\mathbb S^{n-1},
\]
then
\[ \left\vert \left\{ (x,t) \in Q_1 : |\nabla u - e| > \eps_0 \right\} \right\vert \leq \eps_1.\]
This is because if $|\nabla u(x,t)-e|>\eps_0$ for some $(x,t)\in Q_1$, then
\[
|\nabla u|^2-2\nabla u\cdot e+1\ge\eps_0^2.
\]
Since $|\nabla u|\le 1$, we have 
\[
\nabla u\cdot e\le 1-\eps_0^2/2.
\]
Therefore, if $\ell=1-\eps_0^2/2$ and $\mu=\va_1/|Q_1|$, then
\begin{equation}\label{eq:inclusion}
\left\{ (x,t) \in Q_1 : |\nabla u - e| > \eps_0 \right\}\subset  \{(x,t)\in Q_1: \nabla u\cdot e \le \ell\},
\end{equation}
from which it follows that
\[
\left\vert\left\{ (x,t) \in Q_1 : |\nabla u - e| > \eps_0 \right\}\right\vert \le  \left\vert\{(x,t)\in Q_1: \nabla u\cdot e\le \ell\}\right\vert\le \mu |Q_1|\le \eps_1.
\]

Let $\tau,\delta$ be the constants in Corollary \ref{cor:osc}. Denote $ [\log\va/\log(1-\delta)]$ as the integer part of $\log\va/\log(1-\delta)$. Let $k$ be either $[\log\va/\log(1-\delta)]$ or the minimum nonnegative integer such that the condition \eqref{eq:stopping condition} does not hold, whichever is smaller. Then it follows  from Corollary \ref{cor:osc} that for all $\ell=0,1,\cdots, k$, we have
\[
|\nabla u(x,t)|\le (1-\delta)^{\ell}\quad\mbox{in }Q^{(1-\delta)^{\ell}}_{\tau^{\ell}}.
\]
Then for $(x,t)\in Q^{(1-\delta)^{\ell}}_{\tau^{\ell}}\setminus Q^{(1-\delta)^{\ell+1}}_{\tau^{\ell+1}}$, 
\begin{equation}\label{eq:nabla-aux-1-bis}
|\nabla u(x,t)|\le (1-\delta)^{\ell}\le C (|x|^\alpha+|t|^{\frac{\alpha}{2-\alpha\gamma}}),
\end{equation}
where $C=\frac{1}{1-\delta}$ and $\alpha=\frac{\log(1-\delta)}{\log\tau}$. Thus, 
\begin{equation}\label{eq:nabla-aux-1}
|\nabla u(x,t)-q|\le C(|x|^\alpha+|t|^{\frac{\alpha}{2-\alpha\gamma}})\quad\mbox{in }Q_1\setminus Q^{(1-\delta)^{k+1}}_{\tau^{k+1}}
\end{equation}
for every $q\in\R^n$ such that $|q|\le (1-\delta)^k$. Note that when $\gamma\ge 0$, it follows from \eqref{eq:relationtaudelta3} that
\begin{equation}\label{eq:rangeofalpha}
2-\alpha\gamma>0\quad\mbox{and}\quad\frac{\alpha}{2-\alpha\gamma}< \frac 12.
\end{equation}
For $\ell=0,1,\cdots, k$, let 
\begin{equation}\label{eq:rescalingsolutions}
u_\ell(x,t)=\frac{1}{\tau^\ell (1-\delta)^\ell}u(\tau^\ell x, \tau^{2\ell}(1-\delta)^{-\ell\gamma} t).
\end{equation}
Then $|\nabla u_\ell(x,t)|\le 1$ in $Q_1$, and 
\begin{equation}\label{eq:quasilinear}
\partial_t u_\ell = (|\nabla u_\ell |^2+\va^2(1-\delta)^{-2\ell})^{\gamma/2} 
\left(\delta_{ij}+(p-2)\frac{\partial_i u_\ell \partial_j u_\ell}{|\nabla u_\ell|^2+\va^2(1-\delta)^{-2\ell}}\right)\partial_{ij}u_\ell\quad\mbox{in }Q_1.
\end{equation}
Notice that $\va^2(1-\delta)^{-2\ell}\le \va^2(1-\delta)^{-2k}\le 1$. By Lemma \ref{l:timeslices2allcylinder}, we have
\[
\osc_{Q_1} u_\ell \le C,
\]
and thus,
\begin{equation}\label{eq:for time1}
\osc_{Q^{(1-\delta)^{\ell}}_{\tau^{\ell}}} u \le C \tau^\ell (1-\delta)^\ell.
\end{equation}
Let $v=u_k$. 

Case 1: $k=[\log\va/\log(1-\delta)]$. Then
we have $(1-\delta)^{k+1}<\va\le(1-\delta)^k$, and thus,
$\frac 12<1-\delta<\va(1-\delta)^{-k}\le 1$. Therefore, when $\ell=k$,  the equation
\eqref{eq:quasilinear} is a uniformly parabolic quasilinear equation
with smooth and bounded coefficients. By the standard quasilinear
parabolic equation theory (see, e.g., Theorem 4.4 of \cite{LSU} in page 560) and Schauder estimates, there exists
$b\in \R^n, |b|\le 1$ such that
\[
|\nabla v(x,t)-b|\le C(|x|+|t|^{1/2})\le C(|x|^\alpha+|t|^{\frac{\alpha}{2-\alpha\gamma}})\quad\mbox{in }Q_{\tau}^{1-\delta}\subset Q_{1/4}
\]
and
\[
|\partial_t v|\le C\quad\mbox{in }Q_{\tau}^{1-\delta}\subset Q_{1/4},
\]
where $C>0$ depends only on $\gamma, p$ and $n$, and we used that $\frac{\alpha}{2-\alpha\gamma}\le \frac 12$. Rescaling back, we have
\begin{equation}\label{eq:nabla-aux-2}
|\nabla u(x,t)-(1-\delta)^kb|\le C(|x|^\alpha+|t|^{\frac{\alpha}{2-\alpha\gamma}})\quad\mbox{in }Q^{(1-\delta)^{k+1}}_{\tau^{k+1}}
\end{equation}
and
\begin{equation}\label{eq:for time2}
|u(x,t)-u(x,0)|\le C \tau^{-k}(1-\delta)^{k(\gamma+1)}|t|\quad\mbox{in }Q^{(1-\delta)^{k+1}}_{\tau^{k+1}}.
\end{equation}
Then we can conclude from \eqref{eq:nabla-aux-1} and \eqref{eq:nabla-aux-2} that
\[
|\nabla u(x,t)-q|\le C(|x|^\alpha+|t|^{\frac{\alpha}{2-\alpha\gamma}})\quad\mbox{in }Q_{1/2},
\]
where $C>0$ depends only on $\gamma, p$ and $n$. From \eqref{eq:for time2}, we obtain that for $|t|\le \tau^{2m}(1-\delta)^{-m\gamma}$ with $m\ge k+1$, 
\begin{equation}\label{eq:for time3}
|u(0,t)-u(0,0)|\le C\tau^{-k}(1-\delta)^{k(\gamma+1)}\tau^{2m}(1-\delta)^{-m\gamma}\le C\tau^{m}(1-\delta)^{m},
\end{equation}
where in the last inequality we have used \eqref{eq:relationtaudelta3}. From \eqref{eq:for time1} and \eqref{eq:for time3}, we have
\[
|u(0,t)-u(0,0)|\le C|t|^\beta,
\] 
for all $t\in (-1/4,0]$, where $\beta$ is chosen such that
\[
\tau(1-\delta)=(\tau^2(1-\delta)^{-\gamma})^\beta.
\]
That is, 
\begin{equation}\label{eq:holderexponentint}
\beta=\frac{1+\alpha}{2-\alpha\gamma}.
\end{equation}
Note that $\beta>\frac 12$ if $\gamma>-2$.

Case 2:  $k<[\log\va/\log(1-\delta)]$. Then
\[
|\{(x,t)\in Q^{(1-\delta)^{k}}_{\tau^k}: \nabla u\cdot e\le \ell (1-\delta)^k\}|\le \mu |Q^{(1-\delta)^{k}}_{\tau^k}|\quad\mbox{for some }e\in\mathbb S^{n-1}.
\]
Also,
\[
|\nabla u|< (1-\delta)^{\ell}\quad\mbox{in }Q^{(1-\delta)^{\ell}}_{\tau^{\ell}} \mbox{ for all }\ell=0,1,\cdots, k.
\]
Recall $v=u_k$ as defined in \eqref{eq:rescalingsolutions}, which satisfies \eqref{eq:quasilinear} with $\ell=k$. Then $|\nabla v|\le 1$ in $Q_1$, and
\[
|\{(x,t)\in Q_{1}: \nabla v\cdot e\le \ell\}|\le \mu |Q_{1}|\quad\mbox{for some }e\in\mathbb S^{n-1}.
\]
Consequently, using \eqref{eq:inclusion}, we get
\[ \left\vert \left\{ (x,t) \in Q_1 : |\nabla v - e| > \eps_0 \right\} \right\vert \leq \eps_1.\]
 It follows from Lemma \ref{l:gradientarounde2smalloscillation} that there exists $a\in\R$ such that
\[ |v(x,t) - a - e \cdot x| \leq \eta \quad \text{ for all } (x,t) \in Q_{1/2}.\]
By Theorem \ref{l:smallness to regularity}, there exists $b\in\R^n$ such that
\[
|\nabla v-b|\le C (|x|+\sqrt{|t|})  \quad\mbox{for all }(x,t)\in Q_{\tau}^{1-\delta}\subset Q_{1/4}.
\]
and
\[
|\partial_t v|\le C\quad\mbox{in }Q_{\tau}^{1-\delta}\subset Q_{1/4}.
\]
Rescaling back, we have
\[
|\nabla u(x,t)-(1-\delta)^kb|\le C(|x|^\alpha+|t|^{\frac{\alpha}{2-\alpha\gamma}})\quad\mbox{in }Q^{(1-\delta)^{k+1}}_{\tau^{k+1}}
\]
and
\[
|u(x,t)-u(x,0)|\le C \tau^{-k}(1-\delta)^{k(\gamma+1)}|t|\quad\mbox{in }Q^{(1-\delta)^{k+1}}_{\tau^{k+1}}.
\]
Together with \eqref{eq:nabla-aux-1} and \eqref{eq:for time1},  we can conclude as in Case 1 that
\[
|\nabla u(x,t)-q|\le C(|x|^\alpha+|t|^{\frac{\alpha}{2-\alpha\gamma}})\quad\mbox{in }Q_{1/2},
\]
and
\[
|u(0,t)-u(0,0)|\le C|t|^\beta,
\] 
for all $t\in (-1/4,0]$, where $C>0$ depends only on $\gamma, p$ and $n$.

In conclusion, we have proved that there exist $q\in\R^n$ with
$|q|\le 1$, and two positive constants $\alpha, C$ depending only on
$\gamma, p$ and $n$ such that
\[
|\nabla u(x,t)-q|\le C(|x|^\alpha+|t|^{\frac{\alpha}{2-\alpha\gamma}}) \quad\mbox{for all }(x,t)\in Q_{1/2}
\]
and
\[
|u(0,t)-u(0,0)|\le C|t|^\beta,\quad\mbox{for }t\in(-1/4,0],
\] 
where $\beta$ is given in \eqref{eq:holderexponentint}. Then the conclusion follows from standard translation arguments.
\end{proof}


\section{Approximation}\label{sec:approximation}
As mentioned in the introduction, the viscosity solutions to 
\begin{equation}\label{eq:degenerate2}
u_t=|\nabla u|^{\gamma}\left(\Delta u + (p-2)|\nabla u|^{-2}u_iu_ju_{ij}\right)\quad\mbox{in }Q_1
\end{equation}
with $\gamma>-1$ and $p>1$ fall into the general framework studied by
Ohnuma-Sato in \cite{OS}, which is an extension of the work of
Barles-Georgelin \cite{BGe} and Ishii-Souganidis \cite{IS} on the viscosity solutions of
singular/degenerate parabolic equations. Let us recall the definition
of viscosity solutions to \eqref{eq:degenerate2} in \cite{OS}.

We denote 
\[
F(\nabla u,\nabla^2 u)=|\nabla u|^{\gamma}\left(\Delta u + (p-2)|\nabla u|^{-2}u_iu_ju_{ij}\right).
\]

Let $\mathcal F$ be the set of functions $f\in C^2([0,\infty))$ satisfying
\[
f(0)=f'(0)=f''(0)=0,\quad f''(r)>0\mbox{ for all }r>0,
\]
and
\[
\lim_{|x|\to0,x\neq 0}F(\nabla g(x),\nabla^2 g(x))=\lim_{|x|\to0,x\neq 0}F(-\nabla g(x),-\nabla^2 g(x))=0, \quad\mbox{where }g(x)=f(|x|).
\]
This set $\mathcal F$ is not empty when $\gamma>-1$ and $p>1$, since
$f(r)=r^\beta\in\mathcal F$ for any
$\beta>\max(\frac{\gamma+2}{\gamma+1},2)$. Moreover, if
$f\in\mathcal F$, then $\lambda f\in\mathcal F$ for all $\lambda>0$.

Because the equation \eqref{eq:degenerate2} may be singular or
degenerate, one needs to choose the test functions properly when
defining  viscosity solutions. A function $\varphi\in C^2(Q_1)$ is
admissible, which is denoted as $\varphi\in\mathcal A$, if for every
$\hat z=(\hat z, \hat t)\in Q_1$ that $\nabla \varphi(\hat z)=0$,
there exist $\delta>0$, $f\in\mathcal F$ and $\omega\in C([0,\infty))$
satisfying $\omega\ge 0$ and $\lim_{r\to 0}\frac{\omega(r)}{r}=0$ such
that for all $z=(x,t)\in Q_1, |z-\hat z|<\delta$ we have
\[
|\varphi(z)-\varphi(\hat z)-\varphi_t(\hat z)(t-\hat t)|\le f(|x-\hat x|)+\omega(|t-\hat t|).
\] 
\begin{definition}
  An upper (lower, resp.) semi-continuous function $u$ in $Q_1$ is
  called a viscosity subsolution (supersolution, resp.) of
  \eqref{eq:degenerate2} if for every $\varphi\in C^2(Q_1)$,
  $u-\varphi$ has a local maximum (minimum, resp.) at
  $(x_0,t_0)\in Q_1$, then
\[
\varphi_t\le (\ge, resp.)|\nabla \varphi|^{\gamma} \left(\Delta
  \varphi + (p-2)|\nabla
  \varphi|^{-2}\varphi_i\varphi_j\varphi_{ij}\right) \mbox{ at
}(x_0,t_0)\mbox{ when }\nabla \varphi(x_0,t_0)\neq 0
\]
and
\[
\varphi_t\le (\ge, resp.)\ 0 \mbox{ at }(x_0,t_0)\mbox{ when }\nabla \varphi(x_0,t_0)= 0.
\]

A function $u\in C(Q_1)$ is called a viscosity solution of
\eqref{eq:degenerate}, if it is both a viscosity subsolution and a
viscosity supersolution.
\end{definition}

We shall use two properties about the viscosity solutions defined in
the above. The first one is the comparison principle for
\eqref{eq:degenerate2}, which is Theorem 3.1 in \cite{OS}.
\begin{theorem}[Comparison principle]\label{thm:comparison principle}
  Let $u$ and $v$ be a viscosity subsolution and a viscosity
  supersolution of \eqref{eq:degenerate2} in $Q_1$, respectively. If
  $u\le v$ on $\partial_pQ_1$, then $u\le v$ in $\overline Q_1$.
\end{theorem}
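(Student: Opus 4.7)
The plan is to carry out the classical doubling-of-variables proof of viscosity comparison, adapted to this singular/degenerate setting via the penalization class $\mathcal{F}$ and the admissible test-function class $\mathcal{A}$ introduced just above. Suppose for contradiction that $\sup_{\overline{Q_1}}(u-v)>0$; by the boundary hypothesis this supremum is attained at some interior point. First replace $u$ by $\tilde u := u - \eta/(-t)$ with small $\eta>0$, which turns $\tilde u$ into a strict subsolution with a time-derivative margin of $\eta/t^2$ and forces the forthcoming maximizer to stay away from $t=0$; the correction is removed by sending $\eta\to 0$ at the end. Fix once and for all $\beta > \max\bigl(2,(\gamma+2)/(\gamma+1)\bigr)$ so that $f(r)=r^{2\beta}/(2\beta)\in\mathcal{F}$, and introduce the usual doubling
\[
\Phi_\epsilon(x,y,t,s) = \tilde u(x,t) - v(y,s) - \frac{|x-y|^{2\beta}}{2\beta\epsilon} - \frac{(t-s)^2}{2\epsilon} - \lambda(|x|^2+|y|^2).
\]
Routine penalization estimates produce a maximizer $(x_\epsilon,y_\epsilon,t_\epsilon,s_\epsilon)$ at which $|x_\epsilon-y_\epsilon|^{2\beta}/\epsilon$ and $(t_\epsilon-s_\epsilon)^2/\epsilon$ tend to $0$, with both space-time points converging (along subsequences) to an interior maximizer of $u-v$.

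Next I would apply the parabolic Crandall--Ishii maximum principle at the doubling maximum to obtain $a,b\in\R$, a common penalty gradient $q_\epsilon := |x_\epsilon-y_\epsilon|^{2\beta-2}(x_\epsilon-y_\epsilon)/\epsilon$, and symmetric matrices $X,Y$ such that $(a, q_\epsilon+2\lambda x_\epsilon, X+2\lambda I)$ is a parabolic subjet of $\tilde u$ at $(x_\epsilon,t_\epsilon)$, $(b, q_\epsilon-2\lambda y_\epsilon, Y-2\lambda I)$ is a parabolic superjet of $v$ at $(y_\epsilon,s_\epsilon)$, and $\operatorname{diag}(X,-Y)$ is controlled above by the Hessian of the penalty. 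In the nondegenerate branch $x_\epsilon\ne y_\epsilon$ one has $q_\epsilon\ne 0$, $F$ is available classically at both jets, and subtracting the strict sub- and supersolution inequalities yields
\[
\frac{\eta}{t_\epsilon^2} \le F(q_\epsilon-2\lambda y_\epsilon, Y-2\lambda I) - F(q_\epsilon+2\lambda x_\epsilon, X+2\lambda I) + o_\lambda(1).
\]
A standard structural computation based on the matrix bound for $X-Y$, the continuity of $F$ off $\{q=0\}$, and the smallness of $\lambda(|x_\epsilon|+|y_\epsilon|)$ makes the right-hand side vanish as $\epsilon,\lambda\to 0$, while the left-hand side stays bounded below by a positive constant since $t_\epsilon$ is bounded away from $0$. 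This contradiction closes the nondegenerate case.

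The hard part will be the degenerate branch $x_\epsilon = y_\epsilon$, where $q_\epsilon = 0$ and $F$ cannot be evaluated classically. This is where the Ohnuma--Sato machinery is indispensable. The partial penalization functions $\varphi(x,t) := |x-y_\epsilon|^{2\beta}/(2\beta\epsilon) + (t-s_\epsilon)^2/(2\epsilon) + \lambda|x|^2$ used to test $\tilde u$ (and the mirror function used to test $v$) must be shown to be admissible in the sense of $\mathcal{A}$, which is exactly what the choice $\beta > (\gamma+2)/(\gamma+1)$ is designed for: it places $r \mapsto r^{2\beta}/(2\beta\epsilon)$ inside $\mathcal{F}$, so that $F(\nabla g, \nabla^2 g)\to 0$ at the critical point for $g(x) = f(|x|)$, while the quadratic time term supplies the sub-linear modulus $\omega$ required by $\mathcal{A}$. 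Admissibility then activates the singular-gradient clause of the viscosity sub- and supersolution definitions, and, following Ohnuma--Sato's extension of Ishii--Souganidis, a quantitative comparison between $a$ and $b$ is recovered which again contradicts the strict time-margin $\eta/t_\epsilon^2$. Sending $\lambda$ and $\eta$ to zero concludes the proof in either branch.
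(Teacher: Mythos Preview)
The paper does not prove this statement at all: it simply records it as Theorem~3.1 of Ohnuma--Sato \cite{OS} and moves on. Your sketch is therefore not competing with anything in the paper; rather, you are outlining the argument behind the citation, and at that level your plan is the right one --- time penalization to get strictness, doubling of variables with a spatial penalty drawn from the class $\mathcal F$, the parabolic Jensen--Ishii lemma, and a split into the nondegenerate and degenerate gradient cases.

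One technical point deserves care if you actually carry this out. The quadratic spatial localizer $\lambda(|x|^2+|y|^2)$ is not compatible with the admissibility class $\mathcal A$: at any critical point of the full test function the spatial profile must be dominated by some $f\in\mathcal F$, and since every $f\in\mathcal F$ satisfies $f(r)=o(r^2)$ near $0$, a genuine $r^2$ term cannot be absorbed. In the Ohnuma--Sato framework one either drops the localizer entirely (the domain $\overline{Q_1}$ is compact, so it is not needed to produce a maximizer) or replaces $|x|^2$ by $f_0(|x|)$ for some fixed $f_0\in\mathcal F$. Relatedly, your identification of the ``degenerate branch'' as $x_\epsilon=y_\epsilon$ is slightly off once a localizer is present: the relevant dichotomy is whether the \emph{full} test gradient $q_\epsilon\pm 2\lambda x_\epsilon$ vanishes, not whether $q_\epsilon$ alone does. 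These are easily repaired details rather than structural gaps, and with them fixed your outline matches the proof that the paper is invoking.
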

The second one is the stability of viscosity solutions of
\eqref{eq:degenerate2}, which is an application of Theorem 6.1 in
\cite{OS}. Its application to the equation \eqref{eq:degenerate2} with
$\gamma=0,1<p\le 2$ is given in Proposition 6.2 in \cite{OS} with
detailed proof. It is elementary to check it applies to
\eqref{eq:degenerate2} for all $\gamma>-1$ and all $p>1$ (which was
also pointed out in \cite{OS}).
\begin{theorem}[Stability]\label{thm:stability}
  Let $\{u_k\}$ be a sequence of bounded viscosity subsolutions of
  \eqref{eq:main va} in $Q_1$ with $\va_k\ge 0$ that $\va_k\to 0$, and
  $u_k$ converges locally uniformly to $u$ in $Q_1$. Then $u$ is a
  viscosity subsolution of \eqref{eq:degenerate2} in $Q_1$.
\end{theorem}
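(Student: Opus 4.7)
The plan is to verify the viscosity subsolution definition for $u$ directly, following the general stability framework of Ohnuma--Sato \cite[Theorem~6.1]{OS}. I would take $\varphi \in C^2(Q_1)$ such that $u - \varphi$ has a local maximum at $(x_0, t_0) \in Q_1$; after adding $|x-x_0|^4 + (t-t_0)^2$ to $\varphi$ we may assume it is strict, and the locally uniform convergence $u_k \to u$ then produces points $(x_k, t_k) \to (x_0, t_0)$ at which $u_k - \varphi$ has a local maximum. The subsolution property of $u_k$ for \eqref{eq:main va} at $(x_k, t_k)$ yields
\[
\varphi_t(x_k, t_k) \leq F_{\va_k}\bigl(\nabla\varphi(x_k, t_k), \nabla^2\varphi(x_k, t_k)\bigr),
\]
where $F_\va(q, M) := (|q|^2+\va^2)^{\gamma/2}\mathrm{tr}\!\bigl((I + (p-2)(q\otimes q)/(|q|^2+\va^2))\,M\bigr)$.

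If $\nabla\varphi(x_0, t_0) \neq 0$, joint continuity of $F_\va$ in $(\va, q, M)$ on a neighborhood of $(0, \nabla\varphi(x_0,t_0), \nabla^2\varphi(x_0,t_0))$ lets me pass to the limit and recover $\varphi_t(x_0, t_0) \le F(\nabla\varphi(x_0,t_0), \nabla^2\varphi(x_0,t_0))$, as needed. The main obstacle is the critical case $\nabla\varphi(x_0, t_0) = 0$, where one must show $\varphi_t(x_0, t_0) \le 0$ but the right-hand side above need not converge, since $(\nabla\varphi(x_k, t_k), \va_k) \to (0, 0)$ and the operator $F_\va$ is singular at $q = 0$ when $\gamma < 0$. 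To handle this I would compare $\varphi$ to an admissible dominating function: pick $f(r) = r^\beta$ with $\beta > \max((\gamma+2)/(\gamma+1), 2)$, which lies in $\mathcal F$ as noted in the paper, and set
\[
\tilde\varphi(x, t) := \varphi(x_0, t_0) + \varphi_t(x_0, t_0)(t - t_0) + \lambda f(|x - x_0|) + C(t - t_0)^2.
\]
Since $\nabla\varphi(x_0, t_0) = 0$ and $\beta > 2$, for any $\lambda > 0$ one may choose $C$ large so that $\tilde\varphi \ge \varphi$ in a neighborhood of $(x_0, t_0)$ with equality at $(x_0, t_0)$; consequently $u_k - \tilde\varphi$ also attains local maxima at points $(\tilde x_k, \tilde t_k) \to (x_0, t_0)$.

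Two situations then arise. If $\tilde x_k = x_0$, then $\nabla\tilde\varphi = 0$ and $\nabla^2\tilde\varphi = 0$ there (as $\beta > 2$), so the subsolution inequality for $u_k$ at $(x_0, \tilde t_k)$ forces $\varphi_t(x_0, t_0) + 2C(\tilde t_k - t_0) \le F_{\va_k}(0,0) = 0$. If $\tilde x_k \neq x_0$, then $\nabla\tilde\varphi(\tilde x_k, \tilde t_k) \neq 0$, and the defining property of $\mathcal F$ (precisely where the threshold $\beta > (\gamma+2)/(\gamma+1)$ is used) forces $F_{\va_k}(\nabla\tilde\varphi(\tilde x_k, \tilde t_k), \nabla^2\tilde\varphi(\tilde x_k, \tilde t_k)) \to 0$ as $k \to \infty$, uniformly in the regime $\va_k \to 0$. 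Either way, letting $k \to \infty$ and then $\lambda \to 0$ (with $C$ adjusted) yields $\varphi_t(x_0, t_0) \le 0$, completing the verification.

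The bulk of the argument is standard and matches \cite[Theorem~6.1]{OS}; the only equation-specific checks are that $\mathcal F$ is nonempty for all $\gamma > -1$ and $p > 1$ (already recorded in the paper) and that $F_{\va_k} \to F$ locally uniformly away from $q = 0$, which is immediate from the explicit formulas. The supersolution case is treated symmetrically, so the statement about subsolutions suffices.
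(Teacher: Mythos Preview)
Your overall strategy---invoking the Ohnuma--Sato stability framework of \cite[Theorem~6.1]{OS}---is exactly what the paper does; in fact the paper gives no proof beyond citing that result and remarking that its hypotheses are elementary to verify for all $\gamma>-1$ and $p>1$. So at the level of approach you are aligned with the paper.

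However, your execution of the critical case $\nabla\varphi(x_0,t_0)=0$ contains a genuine gap. You assert that with $f(r)=r^\beta$, $\beta>2$, the function
\[
\tilde\varphi(x,t)=\varphi(x_0,t_0)+\varphi_t(x_0,t_0)(t-t_0)+\lambda\,|x-x_0|^\beta+C(t-t_0)^2
\]
dominates $\varphi$ near $(x_0,t_0)$ for any $\lambda>0$ once $C$ is large. This is false for a generic $C^2$ test function: take $\varphi(x,t)=|x-x_0|^2$ (so $\varphi(x_0,t_0)=0$, $\varphi_t(x_0,t_0)=0$); then $\varphi(x,t_0)-\tilde\varphi(x,t_0)=|x-x_0|^2-\lambda|x-x_0|^\beta>0$ for every small $|x-x_0|\neq 0$, no matter how $C$ is chosen. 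The quadratic spatial part of $\varphi$ simply cannot be absorbed by $|x-x_0|^\beta$ when $\beta>2$.

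The repair is built into the Ohnuma--Sato definition itself: one tests only against \emph{admissible} $\varphi\in\mathcal A$. At a critical point, admissibility furnishes a specific $f_\varphi\in\mathcal F$ and a sublinear $\omega$ with
\[
|\varphi(x,t)-\varphi(x_0,t_0)-\varphi_t(x_0,t_0)(t-t_0)|\le f_\varphi(|x-x_0|)+\omega(|t-t_0|),
\]
and it is \emph{this} $f_\varphi$ (not a freely chosen $r^\beta$) that one uses to build the dominating barrier. With that correction your two-case analysis at $(\tilde x_k,\tilde t_k)$ goes through, and the final ``$\lambda\to 0$'' step becomes unnecessary: the conclusion $\varphi_t(x_0,t_0)\le 0$ already follows once $k\to\infty$.
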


Now we shall use the solution of \eqref{eq:main va} to approximate the
solution of \eqref{eq:degenerate2}. Since $p>1$, it follows from
classical quasilinear equations theory (see e.g.
\cite[Theorem~4.4,~p.~560]{LSU}) and the Schauder estimates that
\begin{lemma}\label{lem:app1}
  Let $g\in C(\partial_p Q_1)$. For $\va>0$, there exists a unique
  solution $u^\va\in C^{\infty}(Q_1)\cap C(\overline Q_1)$ of
  \eqref{eq:main va} with $p>1$ and $\gamma\in\R$ such that $u^\va=g$
  on $\partial_p Q_1$.
\end{lemma}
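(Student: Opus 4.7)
My plan is to reduce the claim to the classical existence theory for quasilinear parabolic equations. First I would rewrite \eqref{eq:main va} as $u_t = a_{ij}(\nabla u)\,u_{ij}$ with
\[
a_{ij}(q) = (|q|^2+\varepsilon^2)^{\gamma/2}\left(\delta_{ij} + (p-2)\frac{q_i q_j}{|q|^2+\varepsilon^2}\right).
\]
For fixed $\varepsilon>0$, the $a_{ij}$ are $C^\infty$ functions of $q\in\mathbb{R}^n$ since the argument of the fractional power is bounded below by $\varepsilon^2>0$. The eigenvalues of the matrix $(a_{ij}(q))$ are $(|q|^2+\varepsilon^2)^{\gamma/2}$ with multiplicity $n-1$ and $(p-1)(|q|^2+\varepsilon^2)^{\gamma/2}$ with multiplicity $1$; hence on any set where $|q|\le M$ the equation is strictly and uniformly parabolic with smooth coefficients whose ellipticity depends on $n,p,\gamma,\varepsilon$ and $M$.

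Next I would verify the a priori estimates required by \cite[Chapter~V,~Theorem~4.4]{LSU}. The $L^\infty$ bound $\|u\|_{L^\infty(Q_1)}\le \|g\|_{L^\infty(\partial_p Q_1)}$ follows from the maximum principle, since constants solve \eqref{eq:main va}. A gradient bound $\|\nabla u\|_{L^\infty}\le M$ would come from combining boundary barriers (built from linear functions, against which the equation can be compared because $a_{ij}(q)$ is smooth and bounded on bounded sets of $q$ thanks to the $\varepsilon^2$ regularization) with a Bernstein-type interior estimate: differentiate the equation, test with the maximum principle on $|\nabla u|^2$, and absorb the nonlinearity using the pointwise ellipticity. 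A Hölder estimate on $\nabla u$ would then follow from Krylov--Safonov applied to the linear equation satisfied by each partial derivative $u_k$.

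With these ingredients in place the Leray--Schauder fixed-point argument of \cite{LSU} would produce a classical solution $u^\varepsilon\in C^{2,\alpha}(\overline{Q_1})$, after which repeated differentiation of the equation combined with standard parabolic Schauder estimates would bootstrap the regularity to $u^\varepsilon\in C^\infty(Q_1)$. Uniqueness would follow from the comparison principle for uniformly parabolic quasilinear equations, which applies because the solutions are classical and the coefficients $a_{ij}$ are smooth. The hard part will be handling the fact that $g$ is only continuous and that $\partial_p Q_1$ has edges where $\partial B_1 \times\{-1\}$ meets the rest of the boundary: I would circumvent this by approximating $g$ by a sequence of smooth compatible data $g_k$, solving for each $g_k$ to produce $u^\varepsilon_k$, and then passing to the limit using the comparison principle (which gives $\|u^\varepsilon_k-u^\varepsilon_m\|_{L^\infty(\overline Q_1)}\le \|g_k-g_m\|_{L^\infty(\partial_p Q_1)}$) and the interior Schauder estimates above to obtain the desired $u^\varepsilon \in C^\infty(Q_1)\cap C(\overline{Q_1})$ with $u^\varepsilon = g$ on $\partial_p Q_1$.
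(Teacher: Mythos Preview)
Your proposal is correct and follows essentially the same route as the paper, which simply cites \cite[Theorem~4.4,~p.~560]{LSU} together with Schauder estimates and gives no further detail. One minor slip: the eigenvalue of $a_{ij}(q)$ in the direction of $q$ is $(|q|^2+\varepsilon^2)^{\gamma/2}\bigl(1+(p-2)\tfrac{|q|^2}{|q|^2+\varepsilon^2}\bigr)$ rather than exactly $(p-1)(|q|^2+\varepsilon^2)^{\gamma/2}$, though this does not affect your ellipticity conclusion since $p>1$.
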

The last ingredient we need in the proof of Theorem
\ref{thm:mainholdergradient} is the following continuity estimate up
to the boundary for the solutions of \eqref{eq:main va}, where the
proof is given in the appendix. For two real numbers $a$ and $b$, we
denote $a\vee b=\max(a,b)$, $a\wedge b=\min(a,b)$.
\begin{theorem}[Boundary estimates]\label{prop:boundary regularity}
  Let $u\in C(\overline Q_1)\cap C^\infty(Q_1)$ be a solution of
  \eqref{eq:main va} with $\gamma>-1$ and $\va\in(0,1)$. Let
  $\varphi:=u|_{\partial_p Q_1}$ and let $\rho$ be a modulus of
  continuity of $\varphi$. Then there exists another modulus of
  continuity $\rho^*$ depending only on
  $n,\gamma, p, \rho, \|\varphi\|_{L^\infty(\partial_p Q_1)}$ such
  that
\[
 |u(x,t)-u(y,s)|\le \rho^*(|x-y|\vee\sqrt{|t-s|})
\]
for all $(x,t), (y,s)\in \overline Q_1$.
\end{theorem}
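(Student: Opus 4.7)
The plan is to combine explicit barrier constructions at each point of $\partial_p Q_1$ with the comparison principle (\thm{thm:comparison principle}) for \eqref{eq:main va}, and then to upgrade the resulting boundary modulus to a global one using the interior estimates of \cor{cor:lip} and \lem{lem:holder in t}. Every constant in every barrier must depend only on $n,\gamma,p,\rho$ and $\|\varphi\|_{L^\infty(\partial_p Q_1)}$, which is essential for $\rho^*$ to be independent of $\varepsilon$. Note that by comparison with the constants $\pm\|\varphi\|_{L^\infty(\partial_p Q_1)}$, one has $\|u\|_{L^\infty(Q_1)}\le\|\varphi\|_{L^\infty(\partial_p Q_1)}$, so the interior constants also depend only on the quantities allowed.

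The first step is to reduce to a one-sided boundary estimate: for every $z_0=(x_0,t_0)\in\partial_p Q_1$ and every $z=(x,t)\in\overline{Q_1}$,
\[
|u(z)-u(z_0)|\le \tilde\rho(|x-x_0|\vee\sqrt{|t-t_0|}),
\]
for a modulus $\tilde\rho$ of the allowed type. Given such an estimate, the full conclusion follows by a dichotomy applied to arbitrary $z,z'\in\overline{Q_1}$ at parabolic distance $d$: either both points lie within a controlled multiple of $d$ from a common boundary point $z_0$, in which case we apply the one-sided estimate twice and use the triangle inequality; or at least one of the two points has parabolic distance to $\partial_p Q_1$ comparable to $d$, in which case a suitable rescaling reduces the pair to an interior situation covered by \cor{cor:lip} and \lem{lem:holder in t}.

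For the one-sided estimate at $z_0$, we construct, for each small $r>0$, a classical supersolution $\Phi^+_{z_0,r}$ and subsolution $\Phi^-_{z_0,r}$ of \eqref{eq:main va} on $\overline{Q_1}$ satisfying $\Phi^\pm_{z_0,r}(z_0)=u(z_0)\pm\rho(2r)$, $\Phi^-_{z_0,r}\le\varphi\le\Phi^+_{z_0,r}$ on $\partial_p Q_1$, and an explicit quantitative bound $|\Phi^\pm_{z_0,r}(z)-u(z_0)|\le\rho(2r)+C\|\varphi\|_{L^\infty(\partial_p Q_1)}(|x-x_0|^2+|t-t_0|)/r^2$. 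The precise form depends on the face of the parabolic boundary: at an initial-time point $t_0=-1$, a barrier of the form
\[
\Phi^+_{z_0,r}(x,t)=u(z_0)+\rho(2r)+A\frac{|x-x_0|^2}{r^2}+B(t+1)
\]
should suffice; at a lateral point $|x_0|=1$, one adds a linear term $B'\,e\cdot(x-x_0)$ in the inward normal direction $e=-x_0$ at $x_0$ in order to keep $|\nabla\Phi^\pm_{z_0,r}|$ bounded away from zero near $z_0$. The comparison principle yields $\Phi^-_{z_0,r}\le u\le\Phi^+_{z_0,r}$ on $\overline{Q_1}$, and optimizing in $r$ (taking $r\sim |x-x_0|\vee\sqrt{|t-t_0|}$) produces the modulus $\tilde\rho$ as an explicit function of $\rho$.

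The main technical obstacle is verifying the supersolution property uniformly in $\varepsilon\in(0,1)$, especially in the singular regime $\gamma\in(-1,0)$ where the prefactor $(|\nabla\Phi^+|^2+\varepsilon^2)^{\gamma/2}$ can blow up as $|\nabla\Phi^+|\to 0$. The linear normal term in the lateral barrier and the choice of the quadratic bump's scale in the initial barrier are designed precisely to ensure $|\nabla\Phi^\pm_{z_0,r}|$ is bounded below by a positive constant independent of $\varepsilon$ on the relevant neighborhood of $z_0$; the prefactor is then comparable to a fixed power of $B$ (or of $A/r$), and the supersolution inequality reduces to an algebraic comparison of $\partial_t\Phi^+=B$ against the corresponding quasilinear term, satisfied by choosing $B$ first (depending on $\gamma$ and $\|\varphi\|_{L^\infty(\partial_p Q_1)}$) and then $A$ large. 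A secondary subtlety is ensuring the boundary ordering $\Phi^+_{z_0,r}\ge\varphi$ on the entirety of $\partial_p Q_1$: the term $\rho(2r)$ takes care of boundary points within parabolic distance $r$ of $z_0$, while the coefficient $A/r^2$ of the quadratic bump (or $B'$ of the normal term) absorbs the worst-case $2\|\varphi\|_{L^\infty(\partial_p Q_1)}$ at boundary points farther than $r$ from $z_0$.
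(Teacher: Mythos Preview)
Your overall strategy---explicit barriers at each parabolic boundary point, comparison principle, then upgrade to a global modulus via the interior estimates---matches the paper's approach. The lateral barrier with a linear normal term is a reasonable variant of the paper's construction (which instead glues $\sqrt{(|x-2z|-1)^+}$ to an inverse-power function), and your argument that the critical point of $B'e+2A(x-x_0)/r^2$ lies outside $B_1$ is sound.

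There is, however, a genuine gap at the initial face $t_0=-1$ in the singular range $\gamma\in(-1,0)$. Your proposed barrier
\[
\Phi^+_{z_0,r}(x,t)=u(z_0)+\rho(2r)+A\,\frac{|x-x_0|^2}{r^2}+B(t+1)
\]
has $\nabla\Phi^+_{z_0,r}(x_0,t)=0$ for every $t$, and no choice of scale $A$ or $r$ moves this zero. At $x=x_0$ the supersolution inequality reads
\[
B \;\ge\; (\varepsilon^2)^{\gamma/2}\cdot c(n,p)\cdot \frac{2A}{r^2}\;=\;\varepsilon^{\gamma}\cdot c(n,p)\cdot \frac{2A}{r^2},
\]
and for $\gamma<0$ the right-hand side blows up as $\varepsilon\to 0$. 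Thus your claim that ``the choice of the quadratic bump's scale in the initial barrier [ensures] $|\nabla\Phi^\pm_{z_0,r}|$ is bounded below'' is incorrect for this face: the gradient vanishes at the center regardless of scale, and there is no natural normal direction at an interior initial point along which to add a linear term.

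The paper's remedy (Lemma~A.2, and compare Lemma~\ref{lem:holder in t}) is to replace the quadratic by $|x-x_0|^\beta$ with $\beta=\max\bigl(\tfrac{\gamma+2}{\gamma+1},\,2\bigr)$. This exponent is chosen so that $|\nabla(|x|^\beta)|^\gamma\,|D^2(|x|^\beta)|\sim |x|^{(\beta-1)\gamma+\beta-2}$ stays bounded near the origin, which makes $F_\varepsilon(\nabla\Phi^+,\nabla^2\Phi^+)$ bounded uniformly in $\varepsilon$ and allows $B$ to be chosen independently of $\varepsilon$. With this single modification your scheme goes through.
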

\begin{proof}[Proof of Theorem \ref{thm:mainholdergradient}]
  Given Theorem \ref{thm:holdergradient}, Theorem \ref{thm:comparison
    principle}, Theorem \ref{thm:stability}, Lemma \ref{lem:app1} and
  Theorem \ref{prop:boundary regularity}, the proof of Theorem
  \ref{thm:mainholdergradient} is identical to that of Theorem 1 in
  \cite{JS}.
\end{proof}

\appendix

\section{Appendix} \label{A}

We will adapt some arguments in \cite{CKLS} to prove Theorem
\ref{prop:boundary regularity}. In the following, $c$ denotes some
positive constant depending only on $n,\gamma$ and $p$, which may vary
from line to line. Denote
\[
F_\va(\nabla u,\nabla^2 u)= (|\nabla u |^2+\va^2)^{\gamma/2}
\left(\delta_{ij}+(p-2)\frac{u_iu_j}{|\nabla u|^2+\va^2}\right)u_{ij}.
\]
\begin{lemma}\label{lem:sub-super-solutions}
  For every $z\in\partial B_1$, there exists a function
  $W_z\in C(\overline B_1)$ such that $W_z(z)=0, W_z>0$ in
  $\overline B_1\setminus\{z\}$, and
\[
F_\va(\nabla W_z,\nabla^2 W_z)\le -1\quad\mbox{in }B_1.
\]
\end{lemma}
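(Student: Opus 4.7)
The plan is a standard barrier construction using a radial power function centered at an exterior point. Fix $z \in \partial B_1$ and a small parameter $\eta > 0$; the exterior point $z^\ast = (1+\eta)z$ satisfies $|x - z^\ast| \ge \eta$ for all $x \in \overline B_1$, with equality only at $x = z$. Writing $r = r(x) = |x-z^\ast|$, define the radial profile $\phi(r) = \eta^{-\beta} - r^{-\beta}$ for a large exponent $\beta > 0$ to be chosen, and set $V(x) = \phi(r(x))$. Then $V \in C^\infty(\overline B_1)$, $V(z) = 0$, and $V > 0$ on $\overline B_1 \setminus \{z\}$, so any positive multiple has the required vanishing and positivity; the barrier will be $W_z = \lambda V$ for an appropriately large $\lambda > 0$.

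Because $V$ is radial about $z^\ast$, with $\hat n = (x - z^\ast)/r$ we have $\nabla V = \phi'(r)\,\hat n$ and $\nabla^2 V = \phi''(r)\,\hat n\otimes\hat n + (\phi'(r)/r)(I - \hat n\otimes\hat n)$. Since $\nabla V$ is parallel to $\hat n$, a direct computation gives
\[
F_\va(\nabla V, \nabla^2 V) = (|\nabla V|^2+\va^2)^{\gamma/2}\Bigl[\phi''(r)\bigl(1+(p-2)K\bigr) + (n-1)\frac{\phi'(r)}{r}\Bigr],
\]
where $K = |\nabla V|^2/(|\nabla V|^2 + \va^2) \in (0,1]$. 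Substituting $\phi'(r)=\beta r^{-\beta-1}$ and $\phi''(r)=-\beta(\beta+1)r^{-\beta-2}$ reduces the bracket to
\[
\beta r^{-\beta-2}\Bigl[(n-1) - (\beta+1)\bigl(1+(p-2)K\bigr)\Bigr].
\]
Since $p>1$, the factor $1+(p-2)K$ is bounded below by $\min(1,p-1) > 0$ uniformly in $K \in [0,1]$, so choosing $\beta$ large (depending only on $n$ and $p$) makes the bracket bounded above by $-c_1 < 0$ uniformly in $\va \in (0,1)$ and in $x\in B_1$. Moreover, since $r \in [\eta, 2+\eta]$ on $\overline B_1$, the prefactor $(|\nabla V|^2 + \va^2)^{\gamma/2}$ is bounded below by a positive constant $c_2$ depending only on $n, p, \gamma, \eta$ (uniformly in $\va \in (0,1)$, using $\va \le 1$ to bound the prefactor from below when $\gamma<0$, and using $|\nabla V| \ge \beta(2+\eta)^{-\beta-1}$ when $\gamma\ge 0$). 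Hence $F_\va(\nabla V, \nabla^2 V) \le -c_0$ in $B_1$ for some $c_0 > 0$ independent of $\va \in (0,1)$.

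To upgrade to $\le -1$, use the scaling identity
\[
F_\va\bigl(\nabla(\lambda V), \nabla^2(\lambda V)\bigr) \;=\; \lambda^{\gamma+1}\, F_{\va/\lambda}(\nabla V, \nabla^2 V),
\]
obtained by inserting $\lambda$ into the definition of $F_\va$. For $\lambda \ge 1$ we have $\va/\lambda \in (0,1)$, so the previous paragraph applies and yields $F_\va(\lambda V) \le -\lambda^{\gamma+1} c_0$; since $\gamma+1>0$ by hypothesis, we may choose $\lambda$ large enough that $\lambda^{\gamma+1}c_0 \ge 1$, and $W_z := \lambda V$ is the desired barrier. The rotation invariance of the construction ensures the parameters $\eta,\beta,\lambda$ may be taken independent of $z$. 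The main subtlety is keeping the bracket uniformly negative as $K$ ranges over $(0,1]$; this is where the hypothesis $p>1$ is used, in controlling $1+(p-2)K$ away from zero in the case $1<p<2$ where $p-2$ is negative.
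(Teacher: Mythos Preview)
Your proof is correct and in fact somewhat cleaner than the paper's. Both constructions are radial barriers centered at an exterior point, but you place the center at $z^\ast=(1+\eta)z$ and normalize the profile so that $V$ itself vanishes at $z$; this lets a single smooth function do all the work. The paper instead centers its power-type function $G_z$ at $2z$, which forces $G_z(z)=a(2^\sigma-1)>0$, and then has to glue in a second barrier $w_z(x)=\sqrt{(|x-2z|-1)^+}$ by taking a minimum to recover the boundary value $W_z(z)=0$. Your one-function argument yields a classical $C^\infty$ supersolution on $\overline B_1$, whereas the paper's $W_z$ is only continuous and the inequality holds in the viscosity sense. The final ``multiply by a large constant'' step is the same in both, relying on the scaling $F_\va(\lambda q,\lambda M)=\lambda^{\gamma+1}F_{\va/\lambda}(q,M)$ together with $\gamma>-1$; you make this explicit, while the paper leaves it as a one-line remark. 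One small comment: you do not actually need $\eta$ to be small---any fixed $\eta>0$ works---so the word ``small'' is harmless but unnecessary.
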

\begin{proof}
  Let $z\in \partial B_1$. Let $f(r)=\sqrt{(r-1)^+}$ and
  $w_z(x)=f(|x-2z|)$. Then for $x\in B_1$, we have
\[
F_\va(\nabla w_z, \nabla^2
w_z)=(f'^2+\va^2)^\frac{\gamma}{2}\left(\Big(1+(p-2)\frac{f'^2}{f'^2+\va^2}\Big)f''+\frac{n-1}{|x-2z|}f'\right).
\]
Then there exists $\delta>0$ depending only on $n,\gamma$ and $p$ such
that for $x\in B_1\cap B_{1+\delta}(2z)$, we have
\[
F_\va(\nabla w_z, \nabla^2 w_z)\le -1.
\]
For $\sigma=\frac{2n}{\min(p-1,1)}+2$ and $a>0$, let
$G_z(x)=a(2^\sigma-\frac{1}{|x-2z|^\sigma})$. Then
$G_z(x)\ge a(2^\sigma-1)$ in $B_1$. Also, for $r=|x-2z|$ and
$x\in B_1$, we have
\[
\begin{split}
  &F_\va(\nabla G_z, \nabla^2 G_z)\\
  &=a(\sigma^2r^{-2\sigma-2}+\va^2)^\frac{\gamma}{2}\left(\Big(1+\frac{(p-2)\sigma^2}{\sigma^2+\va^2r^{2\sigma+2}}\Big)
    \sigma(-\sigma-1)r^{-\sigma-2}+(n-1)\sigma r^{-\sigma-2}\right)\\
  &\le -\frac{a}{2}\sigma r^{-\sigma-2}(\sigma^2r^{-2\sigma-2}+\va^2)^\frac{\gamma}{2}\\
  &\le
\begin{cases}
 -\frac{a}{2}3^{-\sigma-2-\gamma(\sigma+1)}\sigma^{1+\gamma}\quad\mbox{when }\gamma\ge 0\\
  -\frac{a}{2} 3^{-\sigma-2}(\sigma^2+1)^{\gamma/2}\sigma\quad\mbox{when }\gamma< 0,\\
\end{cases}
\end{split}
\]
where in the first inequality we used the choice of $\sigma$. Then we choose $a$ that
\[
a(2^\sigma-\frac{1}{|1+\delta|^\sigma})=\sqrt{\delta/2}.
\]
Since $w_z(z)=0$ and $G_z(z)>0$, the function
\[
W_z(x)=
\begin{cases}
G_z(x)\quad\mbox{for }x\in \overline B_1,\ |x-2z|\ge 1+\delta\\
\min(G_z(x),w_z(x))\quad\mbox{for }x\in \overline B_1,\ |x-2z|\le 1+\delta\\
\end{cases}
\]
agrees with $w_z$ in a neighborhood of $z$ (relative to
$\overline B_1$). Also, because of the choice of $a$, $W_z$ agrees
with $G_z$ when $x\in \overline B_1$ and $|x-2z|\ge 1+\tilde\delta$
for some $\tilde\delta\in (0,\delta)$. Moreover,
\[
F_\va(\nabla W_z,\nabla^2 W_z)\le-\kappa
\]
for some constant $\kappa>0$ depending only on $n,\gamma$ and $p$. By
multiplying a large positive constant to $W_z$, we finish the proof of
this lemma.
\end{proof}
\begin{lemma}\label{lem:parabolic-sub-super-solutions}
  For every $(z,\tau)\in\partial_p Q_1$, there exists
  $W_{z,\tau}\in C(\overline Q_1)$ such that $W_{z,\tau}(z,\tau)=0$,
  $W_{z,\tau}>0$ in $\overline Q_1\setminus\{(z,\tau)\}$, and
\[
\partial_t W_{z,\tau}-F_\va(\nabla W_{z,\tau},\nabla^2 W_{z,\tau})\ge 1\quad\mbox{in }Q_1.
\]
\end{lemma}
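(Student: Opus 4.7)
The plan is to distinguish the two types of points in $\partial_p Q_1$: the lateral points ($z \in \partial B_1$, $\tau \in [-1, 0]$) and the interior bottom points ($\tau = -1$, $z \in B_1$). Corner points $z \in \partial B_1$ with $\tau = -1$ will be covered by the lateral construction.

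For the lateral case, I will set
\[
W_{z,\tau}(x,t) = \lambda\, W_z(x) + \tfrac{1}{2}(t - \tau)^2,
\]
where $W_z$ is the spatial barrier of Lemma \ref{lem:sub-super-solutions} and $\lambda > 0$ is to be chosen. The zero/positivity requirements follow directly from $W_z(z) = 0$, $W_z > 0$ on $\overline{B}_1 \setminus \{z\}$, and the non-negativity of $(t-\tau)^2$. For the supersolution inequality, I will use the scaling identity
\[
F_\va(\lambda q, \lambda M) = \lambda^{1+\gamma}\, F_{\va/\lambda}(q, M),
\]
which, together with the fact that the bound $F_\va(\nabla W_z, \nabla^2 W_z) \le -1$ in Lemma \ref{lem:sub-super-solutions} is uniform in $\va > 0$, yields $F_\va(\nabla(\lambda W_z), \nabla^2(\lambda W_z)) \le -\lambda^{1+\gamma}$. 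Since $|t - \tau| \le 1$ in $\overline{Q}_1$, choosing $\lambda \ge 2^{1/(1+\gamma)}$ gives
\[
\partial_t W_{z,\tau} - F_\va(\nabla W_{z,\tau}, \nabla^2 W_{z,\tau}) \ge (t - \tau) + \lambda^{1+\gamma} \ge 1.
\]

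For the interior bottom case $\tau = -1$, $z \in B_1$, I will set
\[
W_{z,-1}(x,t) = A(t+1) + |x-z|^\beta,
\]
with $\beta > \max\{2,\, (\gamma+2)/(\gamma+1)\}$ and $A > 0$ to be chosen. The value $W_{z,-1}(z,-1) = 0$ and positivity on $\overline{Q}_1 \setminus \{(z, -1)\}$ are immediate. A direct computation, splitting into the regions $\{|x-z|^{\beta-1} \gtrsim \va\}$ and $\{|x-z|^{\beta-1} \lesssim \va\}$ in the spirit of the choice of $\beta$ used in Lemma \ref{lem:holder in t}, shows that this choice of $\beta$ makes $|F_\va(\nabla W_{z,-1}, \nabla^2 W_{z,-1})|$ bounded by a constant $C = C(n, p, \gamma, \beta)$ independent of $\va \in (0,1)$. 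Taking $A \ge C + 1$ then yields $\partial_t W_{z,-1} - F_\va \ge A - C \ge 1$.

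The main obstacle will be the $\va$-uniform bound on $F_\va$ in the bottom case when $\gamma \in (-1, 0)$: the singular coefficient $(|\nabla W|^2 + \va^2)^{\gamma/2}$ can a priori be as large as $\va^\gamma$, which diverges as $\va \to 0$. The exponent restriction $\beta > (\gamma+2)/(\gamma+1)$ is precisely what is needed to balance this against the decay of $|\nabla^2 W|$ near $z$, recovering the same critical exponent that appeared in Lemma \ref{lem:holder in t}. In the lateral case, the only technicality is that $W_z$ is only piecewise smooth, but the scaling identity applies equally well in the viscosity sense, and the added smooth time term introduces no further difficulty.
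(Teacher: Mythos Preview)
Your proof is correct and follows essentially the same approach as the paper: the lateral barrier is $\tfrac{1}{2}(t-\tau)^2 + cW_z$ and the bottom barrier is $A(t+1) + |x-z|^\beta$ with $\beta$ at or above the critical exponent $\max\{2,(\gamma+2)/(\gamma+1)\}$. Your choice $\lambda \ge 2^{1/(1+\gamma)}$ in the lateral case is in fact a little more careful than the paper's fixed constant $2$, since it guarantees $(t-\tau)+\lambda^{1+\gamma}\ge 1$ for all $\gamma>-1$; and your strict inequality on $\beta$ and region-splitting in the bottom case simply make explicit the $\va$-uniform bound the paper asserts without details.
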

\begin{proof}
For $\tau>-1$ and $x\in\partial B_1$, then
\[
W_{z,\tau}(x,t)=\frac{(t-\tau)^2}{2}+2W_z
\]
is a desired function, where $W_z$ is the one in Lemma
\ref{lem:sub-super-solutions}. For $\tau=-1$ and $x\in B_1$, we let
\[
W_{z,\tau}(x,t)=A(t+1)+|x-z|^\beta,
\]
where $\beta=\max(\frac{\gamma+2}{\gamma+1},2)$. Then if we choose
$A>0$ large, which depends only on $n,\gamma$ and $p$, then
$W_{z,\tau}$ will be a desired function.
\end{proof}
For two real numbers $a$ and $b$, we denote $a\vee b=\max(a,b)$,
$a\wedge b=\min(a,b)$.
\begin{theorem}\label{prop:boundary regularity-aux}
  Let $u\in C(\overline Q_1)\cap C^\infty(Q_1)$ be a solution of
  \eqref{eq:main va} with $\gamma>-1$ and $\va\in(0,1)$. Let
  $\varphi:=u|_{\partial_p Q_1}$ and let $\rho$ be a modulus of
  continuity of $\varphi$. Then there exists another modulus of
  continuity $\rho^*$ depending only on $n,\gamma, p, \rho$ such that
\[
 |u(x,t)-u(y,s)|\le \tilde\rho(|x-y|\vee |t-s|)
\]
for all $(x,t)\in \overline Q_1,  (y,s)\in\partial_p Q_1$.
\end{theorem}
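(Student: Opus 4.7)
The strategy is to construct, for each boundary point $(y,s)\in\partial_p Q_1$ and each $\eta>0$, smooth comparison functions $V^{\pm}$ that satisfy $V^-\leq u\leq V^+$ on $\overline{Q_1}$, and then invoke the classical parabolic comparison principle. This is admissible because $\va>0$ makes $F_\va$ a non-degenerate quasilinear elliptic operator with smooth coefficients, and both $u$ and the barriers $W_{y,s}$ furnished by Lemma~\ref{lem:parabolic-sub-super-solutions} are smooth.

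Given $(y,s)$ and $\eta>0$, the plan is to set
\[ V^{\pm}(x,t) := u(y,s) \pm \eta \pm M\, W_{y,s}(x,t), \]
where $M\geq 1$ will be taken large and the spatial part of $W_{y,s}$ is replaced by $2cW_y$ for a scaling factor $c\geq 1$ to be chosen. To verify $V^+\geq u$ on $\partial_p Q_1$, split into two regions by a threshold $\delta_1$ chosen so that $\rho(\delta_1)\leq\eta$. On $\{|x-y|\vee|t-s|\leq\delta_1\}$ the modulus of continuity of $\varphi=u|_{\partial_p Q_1}$ yields $u(x,t)-u(y,s)\leq\eta$, so $V^+\geq u$ holds trivially since $W_{y,s}\geq 0$. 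On the complement, compactness gives $W_{y,s}\geq m(\delta_1)>0$ (with $m(\delta_1)$ of order $\delta_1^2$ once $c$ is large enough to make the time part $(t-s)^2/2$ dominant), so one picks $M\geq 2\|\varphi\|_{L^\infty(\partial_p Q_1)}/m(\delta_1)$. The analogous construction gives $V^-\leq u$ on $\partial_p Q_1$.

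The decisive step is showing that $V^+$ is a strict classical supersolution of $u_t=F_\va(\nabla u,\nabla^2 u)$ in $Q_1$. Using the exact scaling identity $F_\va(Mq,MX)=M^{\gamma+1}F_{\va/M}(q,X)$ together with the fact that the bound $F_{\va'}(\nabla W_y,\nabla^2 W_y)\leq -\kappa$ established in the proof of Lemma~\ref{lem:sub-super-solutions} holds uniformly in $\va'\in(0,1]$ with $\kappa=\kappa(n,\gamma,p)>0$, a direct computation gives
\[ V^+_t-F_\va(\nabla V^+,\nabla^2 V^+)\;\geq\;M\,\partial_t W_{y,s}+(2cM)^{\gamma+1}\kappa. \]
Since $\partial_t W_{y,s}=t-s$ obeys $|\partial_t W_{y,s}|\leq 1$ on $\overline{Q_1}$, in the degenerate regime $\gamma\geq 0$ this is $\geq -M+(2cM)^{\gamma+1}\kappa$, positive as soon as $c$ is fixed large (depending only on $n,\gamma,p$). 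The singular regime $-1<\gamma<0$ is the main obstacle: then $M^{\gamma+1}$ grows strictly slower than $M$, and positivity requires $M^{|\gamma|}\leq(2c)^{\gamma+1}\kappa$, which \emph{caps} $M$. The resolution is to choose $c$ large \emph{after} $M$ has been determined by the boundary step; since $(\gamma+1)/|\gamma|>0$, taking $c$ of order $M^{|\gamma|/(\gamma+1)}$ makes the inequality hold. An analogous argument for $V^-$, using the symmetry $F_\va(-q,-X)=-F_\va(q,X)$, shows it is a strict subsolution.

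With $V^{\pm}$ in hand, classical parabolic comparison yields $V^-\leq u\leq V^+$ in $\overline{Q_1}$. Because $W_{y,s}$ is continuous on $\overline{Q_1}$ and vanishes only at $(y,s)$, for any $\eta>0$ there exists $\delta_2=\delta_2(\eta,\rho,\|\varphi\|_{L^\infty},n,\gamma,p)>0$ such that $MW_{y,s}(x,t)\leq\eta$ whenever $|x-y|\vee|t-s|\leq\delta_2$, giving $|u(x,t)-u(y,s)|\leq 2\eta$. Tracing through the dependence $\eta\mapsto\delta_2(\eta)$ produces the desired modulus $\tilde\rho$ and completes the proof.
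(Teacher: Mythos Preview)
Your approach---building barriers $u(y,s)\pm\eta\pm M\,W_{y,s}$ that trap $u$ on $\partial_p Q_1$ and then applying comparison---is the same strategy as the paper's, and your extra parameter $c$ is a careful way of guaranteeing that the scaled barrier remains a strict supersolution. But there is a real gap: your supersolution computation treats only the \emph{lateral} boundary, where Lemma~\ref{lem:parabolic-sub-super-solutions} gives $W_{y,s}=(t-s)^2/2+2W_y$, so that $\partial_t W_{y,s}=t-s$ and the spatial part satisfies $F_{\va'}(\nabla W_y,\nabla^2 W_y)\le-\kappa$. For points $(y,-1)$ on the \emph{bottom} of $\partial_p Q_1$, the lemma supplies a structurally different barrier $W_{y,-1}=A(t+1)+|x-y|^\beta$ whose spatial part $|x-y|^\beta$ is convex, so $F_\va$ applied to it is \emph{positive}; the supersolution property there comes from the time derivative $A>0$ dominating. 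Your device of inflating $c$ does not apply, and after multiplying by $M$ one needs $MA\ge F_\va\bigl(M\nabla(|x-y|^\beta),\,M\nabla^2(|x-y|^\beta)\bigr)$; the right-hand side scales like $M^{\gamma+1}$, which exceeds $MA$ for $\gamma>0$ and large $M$ unless $A$ is allowed to grow with $M$. This is fixable (enlarge $A$ after $M$ is fixed, and check---as you did for $c$---that this introduces no circularity with the boundary-comparison step), but the argument must be carried out.

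A smaller point: the barrier $W_y$ from Lemma~\ref{lem:sub-super-solutions} is constructed as a minimum of two smooth functions and is therefore not smooth across the matching interface, so the ``classical parabolic comparison principle'' for smooth super/subsolutions does not apply without further comment. Either work in the viscosity sense (unproblematic here since $\va>0$ makes the operator uniformly parabolic), or compare $u$ with each smooth branch separately.
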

\begin{proof}
For every $\kappa>0$ and $(z,\tau)\in\partial_p Q_1$, let
\[
W_{\kappa,z,\tau}(x,t)=\varphi(z,\tau)+\kappa+M_\kappa
W_{z,\tau}(x,t),
\]
where $M_\kappa>0$ is chose so that
\[
\varphi(z,\tau)+\kappa+M_\kappa W_{z,\tau}(y,s)\ge
\varphi(y,s)\quad\mbox{for all }(y,s)\in\partial_p Q_1.
\]
Indeed,
\[
M_k=\inf_{(y,s)\in\partial_p Q_1, (y,s)\neq(z,\tau)}\frac{ (\rho(|z-y|\vee |\tau-s|)-\kappa)^+}{W_{z,\tau}(y,s)}
\]
would suffice, and is independent of the choice of $(z,\tau)$. Finally, let
\[
W(x,t)=\inf_{\kappa>0,(z,\tau)\in\partial_p Q_1}W_{\kappa,z,\tau}(x,t).
\]
Note that for every $\kappa>0$ and $(z,\tau)\in\partial_p Q_1$,
\[
\begin{split}
W(x,t)-\varphi(z,\tau)&\le W_{\kappa,z,\tau}(x,t)-\varphi(z,\tau)\\
&\le \kappa+M_\kappa W_{z,\tau}(x,t)\\
&\le \kappa+M_\kappa (W_{z,\tau}(x,t)-W_{z,\tau}(z,\tau))\\
&\le \kappa+M_\kappa\omega(|z-x|\vee|\tau-t|),
\end{split}
\]
where $\omega$ is the modulus of continuity for $W_{z,\tau}$, which is
evidently independent of $(z,\tau)$. Let
$\tilde\rho(r)=\inf_{\kappa>0}(\kappa+M_\kappa\omega(r))$ for all
$r\ge 0$. Then $\tilde\rho$ is a modulus of continuity, and
\[
W(x,t)-\varphi(z,\tau)\le \tilde\rho(|z-x|\vee|\tau-t|)\quad\mbox{ for
  all }(x,t)\in \overline Q_1, (z,\tau)\in\partial_p Q_1.
\]
By Lemma \ref{lem:parabolic-sub-super-solutions}, $W_{\kappa,z,\tau}$
is a supersolution of \eqref{eq:main va} for every $\kappa>0$ and
$(z,\tau)\in\partial_p Q_1$, and therefore, $W$ is also a
supersolution of \eqref{eq:main va}. By the comparison principle,
\[
u(x,t)-\varphi(z,\tau)\le W(x,t)-\varphi(z,\tau)\le \tilde\rho(|z-x|\vee |\tau-t|)
\]
for all $(x,t)\in \overline Q_1,  (z,\tau)\in\partial_p Q_1$.

Similarly, one can show that
$u(x,t)-\varphi(z,\tau)\ge -\tilde\rho(|z-x|\vee |\tau-t|)$ for all
$(x,t)\in \overline Q_1, (z,\tau)\in\partial_p Q_1$. This finishes the
proof of this theorem.
\end{proof}
\begin{proof}[Proof of Theorem \ref{prop:boundary regularity}]
By the maximum principle, we have that
\[
M:=\|u\|_{L^\infty(Q_1)}=\|\varphi\|_{L^\infty(\partial_p Q_1)}.
\]
Let $(x,t), (y,s)\in Q_1$, and we assume that $t\ge s$. Let $x_0$ be
such that $|x-x_0|=1-|x|=r$. Let $\tilde \rho$ be the one in the
conclusion of Theorem \ref{prop:boundary regularity-aux}. Without loss
of generality, we may assume that $2M+2\ge \tilde\rho(r)\ge r$ for all
$r\in[0,2]$ (e.g., replacing $\tilde\rho(r)$ by $\tilde\rho(r)+r$),
and $\tilde\rho(r)\le 2M+2$ for all $r\ge 2$.

In the following, if $\gamma\in(-1,0)$, then we will assume first that
\[
r^{1+\gamma}(2M+2)^{-\gamma}\le 1,
\]
and will deal with the other situation in the end of this proof. Under
the above assumption, we have that
$r^{2+\gamma} (\tilde\rho(2r))^{-\gamma}\le r^{2+\gamma}
(2M+2)^{-\gamma}\le r$
when $\gamma<$0, and
$r^{2+\gamma} (\tilde\rho(2r))^{-\gamma}\le r^{2+\gamma}
(\tilde\rho(r))^{-\gamma}\le r^2\le r$
when $\gamma\ge 0$. Thus, for all $\gamma>-1$, we have
\[
r^{2+\gamma} (\tilde\rho(2r))^{-\gamma}\le r.
\]
We will deal with the situation that $\gamma\in(-1,0)$ and
$r^{1+\gamma}(2M+2)^{-\gamma}\ge 1$ in the very end of the proof.
\medskip

\emph{Case 1:} $r^{2+\gamma} (\tilde\rho(2r))^{-\gamma}\le 1+t$.

\medskip

If $|y-x|\le r/2$ and $|s-t|\le r^{2+\gamma} (\tilde\rho(2r))^{-\gamma}/4$, then we do a scaling:
\[
v(z,\tau)=\frac{u(rz+x,r^{2+\gamma} (\tilde\rho(2r))^{-\gamma}\tau+t)-u(x_0,t)}{\tilde\rho(2r)}.
\]
Then 
\[
v_\tau=(|\nabla v |^2+\va^2r^2\tilde\rho(2r)^{-2})^{\gamma/2}
\left(\delta_{ij}+(p-2)\frac{v_iv_j}{|\nabla
    u|^2+\va^2r^2\tilde\rho(2r)^{-2}}\right)u_{ij}\quad\mbox{in }Q_1.
\]
Notice that $\va r /\tilde\rho(2r)\le \va r /\tilde\rho(r)\le \va<1$
and $r^{2+\gamma} (\tilde\rho(2r))^{-\gamma}\le r$. Thus,
$|v(z,\tau)|\le 1$ for $(z,\tau)\in Q_1$. Applying Corollary
\ref{cor:lip} and Lemma \ref{lem:holder in t} to $v$ and rescaling to
$u$, there exists $\alpha>0$ depending only on $\gamma$ such that $v$
is $C^\alpha$ in $(x,t)$, and there exists $C>0$ depending only on
$n,\gamma$ and $p$, such that
\[
|u(y,s)-u(x,s)|\le C\tilde\rho(2r)\frac{|x-y|^\alpha}{r^\alpha}
\]
and
\[
|u(x,t)-u(x,s)|\le C
\tilde\rho(2r)^{1+\alpha\gamma}\frac{|t-s|^{\alpha}}{r^{\alpha(2+\gamma)}},
\]
Therefore,
\[
|u(y,s)-u(x,t)|\le C\tilde\rho(2r)\frac{|x-y|^\alpha}{r^\alpha}+C
\tilde\rho(2r)^{1+\alpha\gamma}\frac{|t-s|^{\alpha}}{r^{\alpha(2+\gamma)}}.
\]
Since $|y-x|\le r/2$ and
$|s-t|\le r^{2+\gamma} (\tilde\rho(2r))^{-\gamma}/4\le r/4$, we have
$2^{-m-1}r<|x-y|\vee|t-s|\le 2^{-m}r$ for some integer $m\ge 1$. Then
\[
\begin{split}
  |u(y,s)-u(x,t)|&\le C\frac{\tilde\rho(2^{m+2}
    (|x-y|\vee|t-s|))}{2^{m\alpha}}
  +C \frac{\tilde\rho(2^{m+2} (|x-y|\vee|t-s|))^{1+\alpha\gamma}}{2^{m\alpha}r^{\alpha(1+\gamma)}}\\
  &\le C\frac{\tilde\rho(2^{m+2} (|x-y|\vee|t-s|))+\tilde\rho(2^{m+2}
    (|x-y|\vee|t-s|))^{1+\alpha\gamma}}{2^{m\alpha}}.
\end{split}
\]
Notice that
\[
\sup_{m\ge 1}\frac{\tilde\rho(2^{m+2} r)+\tilde\rho(2^{m+2} r)^{1+\alpha\gamma}}{2^{m\alpha}}\to 0\quad\mbox{as }r\to 0.
\]
Therefore, we can choose a modulus of continuity $\rho_1$ such
that
$$\rho_1(r)\ge C\sup_{m\ge 1}\frac{\tilde\rho(2^{m+2}
  r)+\tilde\rho(2^{m+2}
  r)^{1+\alpha\gamma}}{2^{m\alpha}}\quad\mbox{for all }r\ge 0, $$
and we have
\[
|u(y,s)-u(x,t)|\le \rho_1(|x-y|\vee|t-s|).
\]

If $|y-x|\ge r/2$, then 
\[
 \begin{split}
   |u(x,t)-u(y,s)|&\le |u(x,t)-u(x_0,t)|+|u(x_0,t)-u(y,s)|\\
   &\le \tilde\rho(r)+\tilde\rho(|x_0-y|\vee |t-s|)\\
   &\le \tilde\rho(2(|x-y|\vee|t-s|))+\tilde\rho((|x-y|+r)\vee |t-s|)\\
   &\le \tilde\rho(2(|x-y|\vee|t-s|))+\tilde\rho(3(|x-y|\vee |t-s|))\\
   &\le 2\tilde\rho(3(|x-y|\vee |t-s|)).
 \end{split}
\]

If $|x-y|\le r/2$ and
$|s-t|\ge r^{2+\gamma} (\tilde\rho(2r))^{-\gamma}/4$, then
$r\le
4^{\frac{1}{2+\gamma}}(2M+2)^\frac{\gamma}{2+\gamma}|s-t|^{\frac{1}{2+\gamma}}$
when $\gamma\ge 0$, and $r\le 2|s-t|^{\frac{1}{2}}$ when
$\gamma\le 0$. Then one can show similar to the above that
\[
 \begin{split}
  |u(x,t)-u(y,s)|&\le 2 \tilde\rho(c(|x-y|\vee |t-s|^{\frac 12}\vee |s-t|^{\frac{1}{2+\gamma}})),\\
  &\le \rho_2(|x-y|\vee |t-s|)
   \end{split}
\]
where $\rho_2(r)=2\tilde\rho(cr^{\frac 12})$ or
$\rho_2(r)=2\tilde\rho(cr^{\frac{1}{2+\gamma}})$ depending on whether
$\gamma\ge 0$ or $\gamma\le 0$ is a modulus of continuity, $c$ is a
positive constant depending only on $M$ and $\gamma$.

This finishes the proof in this first case.
\medskip

\emph{Case 2:} $r^{2+\gamma} (\tilde\rho(2r))^{-\gamma}\ge 1+t$.

\medskip

Then let $\lambda=\sqrt{|t+1|}$ when $\gamma\ge 0$, and
$\lambda=(2M+2)^{\frac{\gamma}{2+\gamma}}|t+1|^{\frac{1}{2+\gamma}}$
when $\gamma\in(-1,0)$. Then one can check that $\lambda\le r$.

If $|y-x|\le \lambda/2$ and
$|s-t|\le \lambda^{2+\gamma} (\tilde\rho(2\lambda))^{-\gamma}/4$, let
\[
v(z,\tau)=\frac{u(\lambda z+x,\lambda^{2+\gamma}
  (\tilde\rho(2\lambda))^{-\gamma}\tau+t)-u(x_0,t)}{\tilde\rho(2\lambda)}\quad\mbox{for
}(z,\tau)\in Q_1.
\]
Then 
\[
v_\tau=(|\nabla v |^2+\va^2r^2\tilde\rho(2\lambda)^{-2})^{\gamma/2}
\left(\delta_{ij}+(p-2)\frac{v_iv_j}{|\nabla
    u|^2+\va^2\lambda^2\tilde\rho(2\lambda)^{-2}}\right)u_{ij}
\quad\mbox{in}Q_1.
\]
Notice that
$\lambda^{2+\gamma} (\tilde\rho(2\lambda))^{-\gamma}\le
\lambda^2\le\lambda$
when $\gamma\ge 0$, and
$\lambda^{2+\gamma} (\tilde\rho(2\lambda))^{-\gamma}\le \lambda
r^{1+\gamma}(\tilde\rho(2r))^{-\gamma}\le\lambda$
when $\gamma\in(-1, 0)$. Thus, $|v(z,\tau)|\le 1$ for
$(z,\tau)\in Q_1$. Also,
$\va \lambda /\tilde\rho(2\lambda)\le \va \lambda
/\tilde\rho(\lambda)\le \va<1$.
Then, by the similar arguments in case 1, we have
\[
|u(y,s)-u(x,t)|\le \rho_1(|x-y|\vee|t-s|).
\]

If $|y-x|\ge \lambda/2$, then $|t+1|\le c(|x-y|^2\vee|x-y|^{2+\gamma})\le c|x-y|$ for some $c>0$ depending only on $M$ and $\gamma$. Therefore,
\[
 \begin{split}
   |u(x,t)-u(y,s)|&\le |u(x,t)-u(x,-1)|+|u(x,-1)-u(y,s)|\\
   &\le \tilde\rho(|t+1|)+\tilde\rho(|x-y|\vee |1+s|)\\
   &\le \tilde\rho(c|x-y|)+\tilde\rho((|x-y|)\vee |1+t|)\\
   &\le \tilde\rho(c(|x-y|\vee|t-s|))+\tilde\rho(c|x-y|\vee|t-s|)\\
   &= 2\tilde\rho(c(|x-y|\vee|t-s|))\\
   &\le \rho_2(|x-y|\vee|t-s|).
 \end{split}
\]

If $|x-y|\le \lambda/2$ and
$|s-t|\ge \lambda^{2+\gamma} (\tilde\rho(2\lambda))^{-\gamma}/4$, then
$\lambda\le
4^{\frac{1}{2+\gamma}}(2M+2)^\frac{\gamma}{2+\gamma}|s-t|^{\frac{1}{2+\gamma}}$
when $\gamma\ge 0$, and $\lambda\le 2|s-t|^{\frac{1}{2}}$ when
$\gamma\le 0$. Then one can show similar to the above that
\[
 \begin{split}
   |u(x,t)-u(y,s)|&\le |u(x,t)-u(x,-1)|+|u(x,-1)-u(y,s)|\\
   &\le \tilde\rho(|t+1|)+\tilde\rho(|x-y|\vee |1+s|)\\
   &\le \tilde\rho(c(|s-t|^{\frac{2}{2+\gamma}}\vee |s-t|^{\frac{2+\gamma}{2}}))+\tilde\rho((|x-y|)\vee |1+t|)\\
   &\le \tilde\rho(c(|s-t|^{\frac{1}{2+\gamma}}\vee |s-t|^{\frac{1}{2}}))+\tilde\rho(c(|s-t|^{\frac{1}{2+\gamma}}\vee |s-t|^{\frac{1}{2}}))\\
   &\le \rho_2(|x-y|\vee |t-s|).
 \end{split}
\]

This finishes the proof in this second case.

In the end, we deal with the situation that $\gamma\in(-1,0)$ and
$r^{1+\gamma}(2M+2)^{-\gamma}\ge 1$. Then $r\ge c$ for
$c=(2M+2)^{\frac{\gamma}{1+\gamma}}$. Let
$\lambda=(2M+2)^{\frac{\gamma}{2+\gamma}}|t+1|^{\frac{1}{2+\gamma}}$. There
exists $\mu>0$ depending only $M$ and $\gamma$ that if $|t+1|\le \mu$,
then $\lambda\le c$, $c^{2+\gamma} (\tilde\rho(2c))^{-\gamma}\ge 1+t$,
and $\lambda^{1+\gamma} (2M+2)^{-\gamma}\le 1$. Then, for
$t\le -1+\mu$, the same arguments in case 2 works without any change.

Now the final left case is that
$(x,t)\in \overline B_{1-c}\times[-1+\mu,0]$. Then we only need to
consider that $(y,s)\in B_{1-c/2}\times[-1+\mu/2,0]$. It follows from
Corollary \ref{cor:lip} and Lemma \ref{lem:holder in t} that there
exists a modulus of continuity $\bar\rho$ depending only on
$n,\gamma,p,M$ that
\[
u(x,t)-u(y,s)|\le \overline\rho(|x-y|\vee |t-s|).
\]
This finishes the final situation.

Then $\rho^*(r):=\rho_1(r)+\rho_2(r)+\bar\rho(r)$ is a desired modulus
of continuity. The proof of this theorem is thereby completed.
\end{proof}

\bibliographystyle{abbrv}
\bibliography{reference}

\bigskip
\bigskip

\noindent C. Imbert

\noindent Department of Mathematics and Applications, CNRS \& \'Ecole Normale Sup\'erieure (Paris) \\ 45 rue d'Ulm, 75005 Paris, France\\[1mm]
Email: \textsf{Cyril.Imbert@ens.fr}

\bigskip

\noindent T. Jin

\noindent Department of Mathematics, The Hong Kong University of Science and Technology\\
Clear Water Bay, Kowloon, Hong Kong\\[1mm]
Email: \textsf{tianlingjin@ust.hk}  

\bigskip

\noindent L. Silvestre

\noindent Department of Mathematics, The University of Chicago\\
5734 S. University Avenue, Chicago, IL 60637, USA\\[1mm]
Email: \textsf{luis@math.uchicago.edu}

\end{document}